\newtheorem{theorem}{Theorem}
\newtheorem{proposition}[theorem]{Proposition}
\newtheorem{lemma}[theorem]{Lemma}
\newtheorem{conjecture}[theorem]{Conjecture}
\newtheorem{problem}[theorem]{Problem}
\newtheorem{corollary}[theorem]{Corollary}
\theoremstyle{definition}
\theoremstyle{remark}
\newtheorem{remark}[theorem]{Remark}
\theoremstyle{definition}
\newtheorem{definition}[theorem]{Definition}
\newtheorem{example}[theorem]{Example}
\newtheorem{construction}[theorem]{Construction}
\DeclareMathOperator{\Spec}{Spec}
\DeclareMathOperator{\Hom}{Hom}
\DeclareMathOperator{\Aut}{Aut}
\DeclareMathOperator{\Der}{Der}
\DeclareMathOperator{\rk}{rk}
\DeclareMathOperator{\Ker}{Ker}
\renewcommand{\Im}{\operatorname{\mathrm{Im}}}
\newcommand{\GL}{\operatorname{GL}}
\newcommand{\SL}{\operatorname{SL}}
\newcommand{\SO}{\operatorname{SO}}
\def\GG{{\mathbb G}}
\def\KK{{\mathbb K}}
\def\ZZ{{\mathbb Z}}
\def\QQ{{\mathbb Q}}
\def\AA{{\mathbb A}}
\renewcommand{\ge}{\geqslant}
\renewcommand{\le}{\leqslant}
\newcommand{\diag}{\operatorname{diag}}
\newcommand{\Lie}{\operatorname{Lie}}
\newcommand{\ord}{\operatorname{ord}}
\newcommand{\Mat}{\operatorname{Mat}}
\newcommand{\tr}{\operatorname{tr}}
\numberwithin{equation}{section}
\newcounter{num}
\newcommand{\no}{\refstepcounter{num}\arabic{num}}
\begin{document}
\date{}
\title[Root subgroups on affine spherical varieties]{Root subgroups on affine spherical varieties}
\thanks{This research was supported by the Russian Science Foundation, grant no.~19-11-00056}
\author{Ivan Arzhantsev}
\address{%
{\bf Ivan Arzhantsev} \newline HSE University, Faculty of Computer Science, Pokrovsky boulevard 11, Moscow, 109028 Russia}
\email{arjantsev@hse.ru}
\author{Roman Avdeev}
\address{%
{\bf Roman Avdeev} \newline HSE University, Faculty of Computer Science, Pokrovsky boulevard 11, Moscow, 109028 Russia}
\email{suselr@yandex.ru}

\subjclass[2020]{14R20, 14M27, 14M25, 13N15}

\keywords{Additive group action, toric variety, spherical variety, Demazure root, locally nilpotent derivation}

\begin{abstract}
Given a connected reductive algebraic group $G$ and a Borel subgroup~$B \subseteq \nobreak G$, we study $B$-normalized one-parameter additive group actions on affine spherical $G$-varieties.
We establish basic properties of such actions and their weights and discuss many examples exhibiting various features.
We propose a construction of such actions that generalizes the well-known construction of normalized one-parameter additive group actions on affine toric varieties.
Using this construction, for every affine horospherical $G$-variety~$X$ we obtain a complete description of all $G$-normalized one-parameter additive group actions on~$X$ and show that the open $G$-orbit in~$X$ can be connected with every $G$-stable prime divisor via a suitable choice of a $B$-normalized one-parameter additive group action.
Finally, when $G$ is of semisimple rank~$1$, we obtain a complete description of all $B$-normalized one-parameter additive group actions on affine spherical $G$-varieties having an open orbit of a maximal torus $T \subseteq B$.
\end{abstract}

\maketitle


\section{Introduction}
\label{sec0}

Let $\KK$ be an algebraically closed field $\KK$ of characteristic zero.
If the additive group $\GG_a := (\KK, + )$ acts nontrivially on an irreducible algebraic variety~$X$, its image $H$ in the automorphism group $\Aut(X)$ is called a \textit{$\GG_a$-subgroup}.
Moreover, if $X$ is equipped with a regular action of a linear algebraic group~$F$ and $H$ is normalized by~$F$, then we call $H$ an \textit{$F$-root subgroup} on~$X$.
In this case, the action of $F$ on $H$ by conjugation is controlled by a character of~$F$, called the \textit{weight} of~$H$.

When $F = T$ is a torus, $T$-root subgroups on $T$-varieties are used to study the automorphism groups and rationality questions \cite{AHHL,AL,DL,L1,L2,Ni}, transitivity properties for automorphism groups \cite{AKZ1,AKZ2}, equivariant group embeddings \cite{AK,AR}, and affine algebraic monoids \cite{ABZ,DZ}.
The simplest in this setting is the classical case of toric varieties, i.e. normal irreducible $T$-varieties containing an open $T$-orbit.
Toric varieties admit a complete combinatorial description in terms of objects of convex geometry called fans \cite{CLS, Fu, Oda}, and the $T$-root subgroups on a given toric $T$-variety are described in the following simple way: every such subgroup is uniquely determined by its weight and the set of all possible weights is the collection of so-called Demazure roots of the associated fan \cite{De, Oda, Cox, L1}.

A natural intention is to extend the above picture to algebraic varieties equipped with an action of an arbitrary connected reductive algebraic group~$G$.
In this setting, a proper generalization of toric varieties is given by \textit{spherical} varieties, i.e. normal irreducible $G$-varieties containing an open orbit of a Borel subgroup $B \subseteq G$.
These varieties possess many remarkable properties and admit a complete combinatorial description in terms of so-called colored fans, which generalize the fans from the toric case; see~\cite{Tim,Kn91}.
The problem of describing all $G$-root subgroups on affine spherical $G$-varieties was raised in the preprint~\cite{LP}.
It is shown there that such a subgroup is uniquely determined by its weight, and the set of weights is described in some particular cases.
However, the set of $G$-root subgroups on an affine spherical $G$-variety seems to be quite restricted; in particular, it is empty whenever $G$ is semisimple.

In this paper, we initiate a systematic study of $B$-root subgroups on affine spherical $G$-varieties.
One of our aims is to demonstrate that $B$-root subgroups are more natural and suitable generalizations of $T$-root subgroups on $T$-varieties.

By now, $B$-root subgroups on affine spherical $G$-varieties have already appeared in the literature.
In~\cite{RvS}, $B$-root subgroups are used to prove that a smooth affine spherical $G$-variety not isomorphic to a torus is uniquely determined by its automorphism group in the category of smooth affine irreducible varieties.
The only $B$-root subgroups appearing there are a central $\GG_a$-subgroup of the unipotent radical $U \subseteq B$ (see Construction~\ref{constr_central_subgroups}) and all its replicas (see Construction~\ref{constr_replicas}).
In~\cite{Avd}, $B$-root subgroups on affine spherical $G$-varieties naturally arise and play an important role in the study of certain combinatorial invariants of spherical subgroups; see~\S\,\ref{subsec_spherical_subgroups} for details.

As was already mentioned above, every $T$-root subgroup on an affine toric $T$-variety $Z$ is uniquely determined by its weight and the set of weights is described in terms of Demazure roots.
In many applications, the following property of $T$-root subgroups on affine toric $T$-varieties is of particular importance:
every $T$-root subgroup on $Z$ moves a unique $T$-stable prime divisor on~$Z$.
Moreover, there is a bijection between the $T$-stable prime divisors on~$Z$ and the equivalence classes of $T$-root subgroups on~$Z$ under which each $T$-stable prime divisor $D \subseteq X$ corresponds to all $T$-root subgroups on~$Z$ that move~$D$.
See~\S\,\ref{sec2} for details on these results.

It turns out that in the general spherical case the situation is much more complicated.
First, examples show that a $B$-root subgroup on an affine spherical $G$-variety $X$ is not uniquely determined by its weight.
Second, $B$-root subgroups on $X$ are naturally divided into two types: vertical and horizontal.
In geometrical terms, a $B$-root subgroup on $X$ is vertical if it preserves the open~$B$-orbit and horizontal otherwise.
We note that all $T$-root subgroups on affine toric $T$-varieties are horizontal in this terminology.
The set of weights of vertical $B$-root subgroups is rather elusive and we cannot say much about it apart from some straightforward observations.
On the other hand, horizontal $B$-root subgroups admit a certain reduction to the toric case, which imposes rigid restrictions on the set of their weights and hence makes it much more observable, though still far from a complete understanding.
Next, it is easy to see that vertical $B$-root subgroups cannot move any $B$-stable prime divisor on~$X$.
On the other hand, we show that every horizontal $B$-root subgroup moves a unique such divisor.
In the general theory of spherical varieties, $B$-stable prime divisors in~$X$ that are not $G$-stable are called \textit{colors} of~$X$.
We prove that every color in $X$ that is not of type~$a$ (see Definition~\ref{def_type_a}) cannot be moved by a $B$-root subgroup.
On the other hand, examples show that some colors of type~$a$ can be moved by such subgroups.
At last, we conjecture that every $G$-stable prime divisor in~$X$ can be moved by an appropriate $B$-root subgroup (see Conjecture~\ref{conj_G-stable}).

The main contribution of our paper is a general construction of horizontal $B$-root subgroups on an affine spherical $G$-variety~$X$; we call such subgroups \textit{standard}.
In a certain sense, this construction is a generalization of that for $T$-root subgroups on affine toric $T$-varieties.
As a first application of standard $B$-root subgroups, we obtain a complete description of the $G$-root subgroups on~$X$ whose weights belong to the lattice of weights of $G$-semiinvariant rational functions on~$X$.
When $X$ has no colors of type~$a$, this is in fact a description of \textit{all} $G$-root subgroups on~$X$.
The other applications of standard $B$-root subgroups concern the case where $X$ is horospherical (see the definition in~\S\,\ref{subsec_aff_horo}).
First, in the horospherical case there are no colors of type~$a$, hence by the above discussion we get a complete description of all $G$-root subgroups on~$X$.
Second, we obtain a complete description of the set of weights of horizontal $B$-root subgroups on~$X$.
Third, we prove the above-mentioned conjecture on moving $G$-stable prime divisors by $B$-root subgroups (see Theorem~\ref{thm_moved_divisors}).

We also study in detail a particular situation where $G$ is of semisimple rank~$1$ (that is, up to a finite covering, $G$ is isomorphic to the direct product of $\SL_2$ with a torus) and $X$ is an affine spherical $G$-variety having an open orbit of a maximal torus $T\subseteq G$.
Then $X$ is an affine toric $T$-variety and the known description of $T$-root subgroups on~$X$ enables us to describe completely all $B$-root subgroups on~$X$, both vertical and horizontal (see Theorem~\ref{thm_SL2_toric}).
In this case, $X$ is automatically horospherical, so our conjecture also holds for~$X$.

This paper is organized as follows.
In~\S\,\ref{sec1} we discuss preliminary notions and results needed in this paper.
In~\S\,\ref{sec2} we present the well-known combinatorial description of $T$-root subgroups on affine toric $T$-varieties via Demazure roots.
In~\S\,\ref{sec3} we gather all the necessary material on spherical varieties.
In~\S\,\ref{sec4} we discuss basic properties of $B$-root subgroups on affine spherical $G$-varieties and present many examples exhibiting various features.
In~\S\,\ref{sec5} we introduce standard $B$-root subgroups and present their applications, including our description of $G$-root subgroups.
In~\S\,\ref{sec6} we work out the case of an affine toric variety acted on by a connected reductive group of semisimple rank~$1$.

\smallskip

While this paper was under review, a proof of our Conjecture~\ref{conj_G-stable} in the general case appeared in the preprint~\cite{AZ}.
That proof is different from our proof in the horospherical case.

\subsection*{Acknowledgements}

The authors thank Vladimir Zhgoon for his interest in this work and useful discussions.
Thanks are also due to the referee for a careful reading of a previous version of this paper and valuable comments.


\section{Preliminaries}
\label{sec1}

In this section we recall basic facts on algebraic transformation groups used in this paper and provide a framework for the study of root subgroups on affine spherical varieties.

\subsection{General notation and conventions}

Throughout this paper, we work over an algebraically closed field~$\KK$ of characteristic zero.
The notation $\KK^\times$ stands for the multiplicative group $(\KK, \times)$.
The additive group $(\KK,+)$ is denoted by $\GG_a$ and regarded as a one-dimensional linear algebraic group.

If $G$ is an algebraic group then $\mathfrak X(G)$ denotes the character group of~$G$ (in additive notation).

If $X$ is an irreducible algebraic variety then $\Aut(X)$ denotes its automorphism group and $\KK[X]$ (resp.~$\KK(X)$) stands for the algebra of regular functions (resp. field of rational functions) on~$X$.
If in addition $X$ is equipped with a regular action of an algebraic group $H$ then $\KK[X]^H$ (resp.~$\KK(X)^H$) denotes the subalgebra (resp. subfield) of $H$-invariant functions in $\KK[X]$ (resp.~$\KK(X)$).

Given a regular action $G \times X \to X$ of an algebraic group $G$ on an algebraic variety~$X$, we say that the image of $G$ in $\Aut(X)$ is an \emph{algebraic} subgroup of~$\Aut(X)$.
If the group $\Aut(X)$ itself has a structure of an algebraic group such that the natural action $\Aut(X)\times X \to X$ is regular, this definition agrees with the standard definition of an algebraic subgroup in an algebraic group.

\subsection{\texorpdfstring{$\GG_a$}{G\_a}-actions and locally nilpotent derivations}
\label{subsec_Ga-actions&LNDs}

Given an algebraic variety~$X$, by a \emph{$\GG_a$-action} on $X$ we mean a regular action $\GG_a\times X\to X$.
If such an action is nontrivial, it defines a nontrivial algebraic subgroup of $\Aut(X)$, which is called a \emph{$\GG_a$-subgroup}.

A derivation $\partial$ of an algebra $A$ is said to be \emph{locally nilpotent} (LND for short) if for every $a\in A$ there exists $k\in\ZZ_{>0}$ such that $\partial^k(a)=0$.
If $A = \KK[X]$ for an affine algebraic variety~$X$, for any LND $\partial$ on $A$ the map
\begin{equation} \label{eqn_phi_d}
\varphi_{\partial} \colon \GG_a\times A\rightarrow A, \quad (s,a) \mapsto \exp(s\partial)(a),
\end{equation}
defines a rational $\GG_a$-algebra structure on~$A$, hence induces a $\GG_a$-action on~$X$.
In fact, by~\cite[Section~1.5]{Fr} any $\GG_a$-action on~$X$ arises this way, which yields

\begin{proposition} \label{prop_Ga-actions}
Given an affine variety~$X$, the map $\partial \mapsto \varphi_\partial$ induces a bijection between the nonzero LNDs on $\KK[X]$ modulo proportionality and the $\GG_a$-subgroups on~$X$.
\end{proposition}

\subsection{Equivalence of \texorpdfstring{$\GG_a$}{G\_a}-subgroups}
\label{subsec_equivalence}

Let $X$ be an irreducible variety.
We say that two algebraic subgroups $H_1$ and $H_2$ in $\Aut(X)$ are \emph{equivalent} if there exists a nonempty open subset $W\subseteq X$ such that
\[
H_1 x\cap W=H_2 x\cap W \quad \text{for all} \quad x\in W.
\]

By Rosenlicht's Theorem (see, e.g., \cite[Theorem~2.3]{PV}), rational invariants separate orbits of general position in $X$. This implies that two subgroups $H_1$ and $H_2$ are equivalent if and only if the fields $\KK(X)^{H_1}$ and $\KK(X)^{H_2}$ of rational invariants coincide.

If $X$ is affine and $H_1, H_2$ are unipotent, the fields of rational invariants are the quotient fields of the algebras of regular invariants $\KK[X]^{H_1}$ and $\KK[X]^{H_2}$, respectively; see \cite[Theorem~3.3]{PV}.
Since $\KK[X]^{H_i}=\KK[X]\cap\KK(X)^{H_i}$ for $i=1,2$, we conclude that $H_1$ and~$H_2$ are equivalent if and only if
$\KK[X]^{H_1}=\KK[X]^{H_2}$.

\begin{proposition} \label{prop_equivalence}
Suppose $X$ is an irreducible affine variety, $H_1, H_2$ are two $\GG_a$-subgroups in $\Aut(X)$, and $\partial_1, \partial_2$ are the corresponding LNDs on $\KK[X]$.
Then the following conditions are equivalent.
\begin{enumerate}[label=\textup{(\arabic*)},ref=\textup{\arabic*}]
\item \label{equiv1}
$H_1$ and $H_2$ are equivalent.
\item \label{equiv2}
$\Ker \partial_1 = \Ker \partial_2$.
\item \label{equiv3}
There are $a_1,a_2\in\KK[X] \setminus \lbrace 0 \rbrace$ such that $a_1\partial_1=a_2\partial_2$.
\end{enumerate}
\end{proposition}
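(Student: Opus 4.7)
The plan is to establish $(1)\Leftrightarrow(2)$, then $(3)\Rightarrow(2)$, and finally the nontrivial implication $(2)\Rightarrow(3)$. The equivalence $(1)\Leftrightarrow(2)$ follows at once from the discussion preceding the proposition: for unipotent $\GG_a$-subgroups $H_1,H_2$ acting on an irreducible affine variety, equivalence amounts to $\KK[X]^{H_1}=\KK[X]^{H_2}$, and the standard correspondence between $\GG_a$-actions and LNDs recalled in \S\,\ref{subsec_Ga-actions&LNDs} identifies $\KK[X]^{H_i}$ with $\Ker\partial_i$ (since $\exp(s\partial_i)(f)=f$ for all $s$ is equivalent to $\partial_i(f)=0$). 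So the first step is just to spell this out.

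The implication $(3)\Rightarrow(2)$ is a one-line verification. If $a_1\partial_1=a_2\partial_2$ with $a_1,a_2\in\KK[X]\setminus\{0\}$, then for any $f\in\Ker\partial_1$ one has $a_2\partial_2(f)=a_1\partial_1(f)=0$, which forces $\partial_2(f)=0$ since $\KK[X]$ is a domain and $a_2\ne 0$; hence $\Ker\partial_1\subseteq\Ker\partial_2$, and the reverse inclusion follows by symmetry.

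The main content is $(2)\Rightarrow(3)$, which I would prove by passing to the function field. Set $K=\Ker\partial_1=\Ker\partial_2$, $L=\KK(X)$, and $F=\operatorname{Frac}(K)$. Each $\partial_i$ extends uniquely to a $\KK$-derivation of $L$ via the quotient rule, and by construction annihilates $K$ and hence $F$, so $\partial_1,\partial_2\in\Der_F(L,L)$. A classical property of a nonzero LND on a domain (see e.g.~\cite{Fr}) produces a local slice, that is, an element $s\in\KK[X]$ with $\partial_1(s)\in K\setminus\{0\}$; such an $s$ is transcendental over $F$ and $L=F(s)$. Hence $L/F$ is purely transcendental of transcendence degree~$1$, and since $\operatorname{char}\KK=0$ it is separable, so $\Der_F(L,L)$ is one-dimensional over~$L$. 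Therefore $\partial_1=c\,\partial_2$ for some $c\in L^{\times}$; writing $c=a_2/a_1$ with $a_1,a_2\in\KK[X]\setminus\{0\}$ and clearing denominators yields $a_1\partial_1(f)=a_2\partial_2(f)$ for every $f\in\KK[X]$, with both sides lying in $\KK[X]$, which is condition~(3). The step I expect to require the most care is the assertion $L=F(s)$, which rests on the local slice construction for LNDs; but this is classical material covered by the reference \cite{Fr} already cited in the paper.
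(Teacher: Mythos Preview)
Your proof is correct and follows the same route as the paper: the equivalence $(1)\Leftrightarrow(2)$ is handled identically via $\KK[X]^{H_i}=\Ker\partial_i$ and the preceding discussion, while for $(2)\Leftrightarrow(3)$ the paper simply cites \cite[Principle~12]{Fr}, which is exactly the statement you prove from scratch using the local-slice construction and the one-dimensionality of $\Der_F(L,L)$. So your argument is not a different approach but rather an unpacking of the cited reference; the content is the same.
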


\begin{proof}
Observe that $\KK[X]^{H_i} = \Ker \partial_i$ for $i=1,2$; then the equivalence (\ref{equiv1})$\Leftrightarrow$(\ref{equiv2}) follows from the above discussion.
The equivalence (\ref{equiv2})$\Leftrightarrow$(\ref{equiv3}) is implied by~\cite[Principle~12]{Fr}.
\end{proof}

\subsection{Root subgroups}
\label{subsec_root_subgroups}

Let $X$ be an irreducible variety and let $F$ be a linear algebraic subgroup of~$\Aut(X)$.

\begin{definition}
An \emph{$F$-root subgroup} on~$X$ is a $\GG_a$-subgroup in $\Aut(X)$ normalized by~$F$.
\end{definition}

Given an $F$-root subgroup $H$ on~$X$, the corresponding group homomorphism $\varphi \colon \GG_a \to \Aut(X)$ satisfies
\[
g \varphi(s) g^{-1} = \varphi(\chi(g)s)
\]
for all $g \in F$, $s \in \KK$ and some $\chi \in \mathfrak X(F)$.
The character $\chi = \chi_H$ is called the \textit{weight} of~$H$.

Now assume that $X$ is affine.
An LND $\partial$ on $\KK[X]$ is said to be \textit{$F$-normalized} if
\[
g \cdot \partial (g^{-1} \cdot f) = \chi(g) \partial(f)
\]
for all $g \in F$, $f \in \KK[X]$ and some character $\chi \in \mathfrak X(F)$, which is called the \textit{weight} of~$\partial$.

The next result is a direct consequence of Proposition~\ref{prop_Ga-actions}.

\begin{proposition}
Given an affine variety~$X$, the map $\partial \mapsto \varphi_\partial$ in~\textup{(\ref{eqn_phi_d})} induces a weight-preserving bijection between the nonzero $F$-normalized LNDs on~$\KK[X]$ modulo proportionality and the $F$-root subgroups on~$X$.
\end{proposition}

\begin{remark}
If $X$ is affine and $F = T$ is a torus then the algebra $\KK[X]$ has the natural grading
\[
\KK[X] = \bigoplus \limits_{u \in \mathfrak X(T)} \KK[X]_u
\]
where each homogeneous component $\KK[X]_u \subseteq \KK[X]$ is the $T$-weight subspace of weight~$u$, i.e. $\KK[X]_u = \lbrace f \in \KK[X] \mid t\cdot f = u(t)f \ \text{for all} \ t \in T \rbrace$.
It is easy to see that an LND $\partial$ on~$\KK[X]$ is $T$-normalized if and only if $\partial$ is \textit{homogeneous}, i.e. sends homogeneous elements to homogeneous ones; see~\cite[Section~3.7]{Fr}.
\end{remark}

\subsection{Orbits of root subgroups}
\label{subsec_orbits}

Let $X$ and $F$ be as in~\S\,\ref{subsec_root_subgroups} ($X$ is not necessarily affine) and let $H$ be an $F$-root subgroup on~$X$.

Take an $H$-orbit $Y \subseteq X$ and let $F_Y$ denote the stabilizer in $F$ of the subvariety~$Y$.
Since $H$ is $F$-normalized, $gY$ is again an $H$-orbit in~$X$ for all $g \in F$.
In particular, the stabilizer in~$F$ of a point in $Y$ is contained in $F_Y$.

Assume that $Y$ is not a point.
Then $Y$ is isomorphic to the affine line $\AA^1$, whose automorphisms are well known to have the form $x \mapsto ax+b$ for some $a,b \in \KK$, $a \ne  0$.
Thus we have the following three possibilities.

\smallskip

{\it Case}~\no: \label{case1}
$F_Y$ acts on $Y$ transitively.
In this case, $Y$ is contained in a single $F$-orbit $O$, which is automatically preserved by~$H$.

\smallskip

{\it Case}~\no: \label{case2}
$F_Y$ acts on $Y$ as a one-dimensional torus.
In this case, $Y$ meets precisely two $F$-orbits in $X$, say $O_1$ and $O_2$.
The stabilizers in $F$ of a point in $O_1$ and of a point in $O_2$ differ by a one-dimensional torus.
So, up to renumbering, we have $\dim O_1 = \dim O_2+1$, $O_2$ meets $Y$ at a single point~$P$, and $O_1$ meets $Y$ in the open subset $Y\setminus \{P\}$.
Then the $H$-orbit of any other point in~$O_1$ meets $O_2$ in a single point whose complement is contained in~$O_1$, and we say that the $F$-orbits $O_1$ and $O_2$ are \textit{connected} by~$H$.

\smallskip

{\it Case}~\no. \label{case3}
$F_Y$ acts on $Y$ as a finite group.
In this case, $Y$ meets infinitely many $F$-orbits in~$X$.

\smallskip

Note the following observations:
\begin{itemize}
\item
if $F$ acts on $X$ with finitely many orbits then Case~\ref{case3} is excluded;

\item
if $F_Y$ is a torus then Case~\ref{case1} does not occur.
\end{itemize}

We say that $H$ \textit{moves} an $F$-stable prime divisor $D \subseteq X$ if $D$ is not $H$-stable.

\begin{proposition} \label{prop_open_orbit}
Under the above assumptions suppose that $F$ is connected and acts on~$X$ with an open orbit~$O_F$.
If $H$ does not preserve~$O_F$ then there is exactly one $F$-stable prime divisor on~$X$ moved by~$H$; moreover, this divisor contains an open $F$-orbit.
\end{proposition}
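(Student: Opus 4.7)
The plan is to apply the orbit trichotomy of \S\,\ref{subsec_normalized} to a general $H$-orbit passing through $\mathcal{O}$. Since $H$ acts non-trivially on $X$ and $\mathcal{O}$ is open, for a general $x \in \mathcal{O}$ the orbit $Y := H\cdot x$ is one-dimensional, hence isomorphic to $\mathbb{A}^1$. Because $B$ normalizes $H$ and acts transitively on $\mathcal{O}$, one has $H\cdot(bx) = b\cdot(H\cdot x)$, so every such $Y$ is a $B$-translate of a fixed one, and all of them meet the same collection of $B$-orbits in~$X$. Combined with the hypothesis that $H$ does not preserve~$\mathcal{O}$, this forces $Y \not\subset \mathcal{O}$ for every non-$H$-fixed $x \in \mathcal{O}$, ruling out Case~\ref{case1}.

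The key step will be to exclude Case~\ref{case3}. Here I would use the identity $B\cdot x \cap Y = B_Y\cdot x$, which holds because any element of $B$ sending a point of $Y$ into $Y$ must preserve the whole $H$-orbit~$Y$ (as $B$ normalizes $H$). In Case~\ref{case3} the right-hand side is the orbit of a finite group acting on~$Y$, hence a finite set, while the left-hand side equals $\mathcal{O}\cap Y$, a non-empty open subset of $Y\cong\mathbb{A}^1$, hence infinite. This contradiction places us in Case~\ref{case2}: $Y$ meets precisely one other $B$-orbit $\mathcal{O}_2$, of dimension $\dim X - 1$, in a single point~$P$. Since $B$ is connected, $D := \overline{\mathcal{O}_2}$ is an irreducible $B$-stable closed subvariety of codimension one, that is, a $B$-stable prime divisor. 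As $H\cdot P = Y$ passes from~$D$ into~$\mathcal{O}$, the divisor~$D$ is moved by~$H$, which yields existence.

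For uniqueness, suppose $D'$ is any $B$-stable prime divisor moved by~$H$. Then $H\cdot D'$ is irreducible and $B$-invariant, and strictly contains~$D'$, so its closure equals~$X$; hence for a general $x \in \mathcal{O}$ one has $Y\cap D'\ne\emptyset$. Since $D' \subset X\setminus \mathcal{O}$ and $Y \setminus \mathcal{O} = \{P\}$, we conclude $P\in D'$, so $\mathcal{O}_2 = B\cdot P \subset D'$, and comparing dimensions of irreducible varieties forces $D' = \overline{\mathcal{O}_2} = D$. The principal obstacle I anticipate is the exclusion of Case~\ref{case3} above; once that is settled, everything else is a direct reading of the case analysis already set up in~\S\,\ref{subsec_normalized}.
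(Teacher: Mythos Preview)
Your proof is correct and follows the same approach as the paper: apply the trichotomy of~\S\,\ref{subsec_normalized} to an $H$-orbit $Y$ through~$\mathcal{O}$, rule out Cases~\ref{case1} and~\ref{case3}, and read off the unique moved divisor from Case~\ref{case2}. The paper dispatches your ``principal obstacle'' in one phrase---since $Y\cap\mathcal{O}$ is open (hence cofinite) in $Y\cong\mathbb{A}^1$, the orbit $Y$ meets only finitely many $B$-orbits, immediately contradicting Case~\ref{case3}---while your uniqueness argument spells out what the paper records as ``clearly.''
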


\begin{proof}
Let $Y \subseteq X$ be an $H$-orbit that meets~$O_F$.
Since the intersection $Y \cap O_F$ is open in~$Y$, Case~\ref{case3} does not occur.
Since $O_F$ is not preserved by~$H$, Case~\ref{case1} is excluded.
So Case~\ref{case2} holds for~$Y$ and hence there is an $F$-orbit $O' \subseteq X$ with $\dim O' = \dim O_F - 1$ such that $O_F$ and $O'$ are connected by~$H$.
Now the closure of $O'$ in $X$ is an $F$-stable prime divisor on~$X$ moved by~$H$.
Clearly, all other $F$-stable prime divisors on~$X$ are $H$-stable.
\end{proof}


\section{Demazure roots and root subgroups on affine toric varieties}
\label{sec2}

In this section we present the well-known description of root subgroups on affine toric varieties.

\subsection{Demazure roots of a cone}
\label{subsec_Dem_roots}

Let $M$ be a lattice of finite rank and consider the dual lattice $N = \Hom_\ZZ(M, \ZZ)$ along with the natural pairing $\langle \cdot\,, \cdot \rangle \colon N \times M \to \ZZ$.
Consider also the rational vector spaces $M_\QQ = M \otimes_\ZZ \QQ$ and $N_\QQ = N \otimes_\ZZ \QQ$ and extend the pairing to a bilinear map $\langle \cdot\,, \cdot \rangle \colon N_\QQ \times M_\QQ \to \QQ$.

Let $\mathcal G \subseteq M_\QQ$ be a finitely generated (or, equivalently, polyhedral) convex cone and recall some terminology related to it.
The cone $\mathcal G$ is said to be \textit{strictly convex} if $\mathcal G \cap (-\mathcal G) = \lbrace 0 \rbrace$, i.e. $\mathcal G$ contains no nonzero subspaces of~$M_\QQ$.
The \textit{dimension} of $\mathcal G$ is that of its linear span.
The \textit{dual cone} of $\mathcal G$ is
\[
\mathcal G^{\vee} := \lbrace q\in N_{\QQ} \mid \langle q,v\rangle\ge 0 \ \text{for all} \ v\in\mathcal G \rbrace;
\]
this is a finitely generated convex cone in~$N_\QQ$.
Note that one automatically has
\[
\mathcal G = \lbrace v \in M_{\QQ} \mid \langle q, v \rangle \ge 0 \ \text{for all} \ q \in \mathcal G^\vee \rbrace,
\]
so that the cone $\mathcal G$ is dual to~$\mathcal G^\vee$.
A \textit{face} of $\mathcal G$ is a subset $\mathcal F \subseteq \mathcal G$ of the form
\[
\mathcal F = \lbrace v \in \mathcal G \mid \langle q, v \rangle = 0 \rbrace
\]
for some $q \in \mathcal G^\vee$.
Every face of $\mathcal G$ is itself a finitely generated convex cone.
Faces of $\mathcal G^\vee$ are defined in a similar way.
A face of codimension one is called a \textit{facet}.
A face of dimension one of a strictly convex cone is called a \textit{ray}.

Now put $\mathcal E := \mathcal G^\vee$ and assume in addition that $\mathcal G$ has full dimension, so that $\mathcal E$ is strictly convex.
Let $\mathcal E^1$ be the set of primitive elements $\rho$ of the lattice~$N$ such that $\QQ_{\ge0}\rho$ is a ray of~$\mathcal E$.
For every $\rho \in \mathcal E^1$, let $\mathcal F_\rho = \mathcal G \cap \Ker \rho$ be the facet of $\mathcal G$ defined by~$\rho$.
Note that the map $\rho \mapsto \mathcal F_\rho$ is a bijection between $\mathcal E^1$ and the facets of~$\mathcal G$.
For every $\rho \in \mathcal E^1$, we define
\[
\mathfrak{R}_{\rho}(\mathcal E) := \lbrace e\in M \mid \langle \rho, e \rangle = -1
\ \text{and} \ \langle \rho', e \rangle \ge 0
 \ \text{for all} \ \rho' \in \mathcal E^1 \setminus \lbrace \rho \rbrace \rbrace.
\]
One easily checks that for $\rk M \ge 2$ the set $\mathfrak R_\rho(\mathcal E)$ is infinite for each $\rho \in \mathcal E^1$.
The elements of the set $\mathfrak R(\mathcal E) := \bigsqcup\limits_{\rho \in \mathcal E^1} \mathfrak{R}_{\rho}(\mathcal E)$ are called the \textit{Demazure roots} of the cone~$\mathcal E$.

For future reference, we mention the following consequence of the construction.

\begin{remark} \label{rem_DR_are_in_M}
One has $\mathfrak R(\mathcal E) \subseteq M$.
\end{remark}

Below in this paper we shall need the following simple result.

\begin{lemma} \label{lemma_two_cones}
Let $\mathcal G, \widetilde{\mathcal G}$ be two finitely generated convex cones of full dimension in $M_\QQ$ and let $\mathcal E := \mathcal G^\vee$, $\widetilde{\mathcal E} := \widetilde{\mathcal G}^\vee$ be the respective dual cones.
Suppose that $\mathcal G \subseteq \widetilde{\mathcal G}$ \textup(and hence $\widetilde{\mathcal E} \subseteq \mathcal E$\textup).
Then for every $\rho \in \mathcal E^1 \setminus \widetilde{\mathcal E}$ the set $\mathfrak R_\rho(\mathcal E) \cap \widetilde{\mathcal G}$ contains infinitely many elements.
\end{lemma}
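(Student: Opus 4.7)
The plan is to produce an explicit infinite arithmetic progression inside $\mathfrak{R}_\rho(\mathcal E) \cap \widetilde{\mathcal G}$. Assume $\rk M \ge 2$ (the only non-trivial case), so that the facet $\mathcal F := \mathcal G \cap \Ker \rho$ of~$\mathcal G$ is a rational polyhedral cone of positive dimension spanning the hyperplane $\Ker\rho \subset M_\QQ$. I would pick any element $e_0 \in \mathfrak{R}_\rho(\mathcal E)$, which is non-empty by the remark preceding the lemma, together with a lattice point $f$ in the relative interior of~$\mathcal F$; the latter exists because any rational point of that relative interior has an integer multiple in~$M$.

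The candidate family is $\{e_0 + n f : n \in \ZZ_{\ge 0}\}$. Every member lies in $\mathfrak{R}_\rho(\mathcal E)$: the relation $\langle \rho, f\rangle = 0$ preserves $\langle \rho, e_0 + nf\rangle = -1$, while $f \in \mathcal F \subset \mathcal G$ yields $\langle \rho', f\rangle \ge 0$ for every $\rho' \in \mathcal E^1$, which preserves the inequalities $\langle \rho', e_0 + nf\rangle \ge 0$. The points are pairwise distinct since $f \ne 0$. The whole matter thus reduces to showing that $e_0 + n f \in \widetilde{\mathcal G}$ for all sufficiently large~$n$, i.e.\ that $\langle \tilde\rho, e_0 + nf\rangle \ge 0$ for every $\tilde\rho \in \widetilde{\mathcal E}^1$.

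The heart of the argument, and the only place where the hypothesis $\rho \notin \widetilde{\mathcal E}$ is used, is the strict positivity $\langle \tilde\rho, f\rangle > 0$ for every $\tilde\rho \in \widetilde{\mathcal E}^1$. The non-strict inequality is clear from $f \in \mathcal G \subset \widetilde{\mathcal G}$. If equality held for some $\tilde\rho$, then $\tilde\rho$ would vanish on the entire facet~$\mathcal F$ (because $f$ lies in its relative interior), hence be a rational multiple of~$\rho$ in~$N_\QQ$; primitivity of both vectors would then force $\tilde\rho = \pm\rho$. The case $\tilde\rho = \rho$ contradicts $\rho \notin \widetilde{\mathcal E}$, while $\tilde\rho = -\rho$ would make $\rho$ non-positive on $\widetilde{\mathcal G} \supset \mathcal G$; combined with $\rho \in \mathcal G^\vee$, this forces $\rho = 0$ by the full-dimensionality of~$\mathcal G$, a contradiction. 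Granting the strict positivity, $\langle \tilde\rho, e_0 + nf\rangle = \langle \tilde\rho, e_0\rangle + n\langle \tilde\rho, f\rangle$ tends to~$+\infty$ for each of the finitely many $\tilde\rho \in \widetilde{\mathcal E}^1$, so a common threshold $n_0$ makes all these values simultaneously non-negative, and $\{e_0 + nf : n \ge n_0\}$ is the desired infinite subset of $\mathfrak{R}_\rho(\mathcal E) \cap \widetilde{\mathcal G}$.
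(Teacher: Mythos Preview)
Your proof is correct and follows essentially the same route as the paper's: both arguments select a lattice point in the relative interior of the facet $\mathcal F = \mathcal G \cap \Ker\rho$ (the paper's~$v$, your~$f$), verify that every nonzero element of $\widetilde{\mathcal E}$ pairs strictly positively with it, and then use the arithmetic progression $e_0 + nf$ to produce infinitely many Demazure roots in~$\widetilde{\mathcal G}$. Your justification of the strict positivity is a bit more explicit than the paper's (you spell out the case $\tilde\rho = -\rho$, whereas the paper simply asserts $\langle q, v\rangle > 0$ for all $q \in \widetilde{\mathcal E}\setminus\{0\}$), but the underlying idea is identical.
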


\begin{proof}
As $\QQ_{\ge0}\rho$ is a ray of $\mathcal E$, there exists an element $v \in \mathcal G$ such that $\langle \rho, v \rangle = 0$ and $\langle \rho', v \rangle > 0$ for all $\rho' \in \mathcal E^1 \setminus \lbrace \rho \rbrace$.
It follows that $\langle q, v \rangle > 0$ for all $q \in \widetilde{\mathcal E} \setminus \lbrace 0 \rbrace$.
Rescaling $v$ if necessary we may assume in addition that $v \in M$.
Then for every $e \in \mathfrak R_\rho(\mathcal E)$ all elements of the form $e + k v$ with $k \ge k_0$ yield the desired infinite set of Demazure roots for a sufficiently large~$k_0 \in \ZZ_{>0}$.
\end{proof}

\subsection{Root subgroups on affine toric varieties}
\label{subsec_RS_on_ATV}

Let $T$ be a torus.
For every $u \in \mathfrak X(T)$, let $\chi^u$ be the regular function on $T$ representing the character~$-u$.
Then $\chi^u$ is $T$-semi\-invariant of weight~$u$, $\chi^{u_1} \cdot \chi^{u_2} = \chi^{u_1 + u_2}$ for all $u_1,u_2 \in \mathfrak X(T)$, and there is the decomposition
\[
\KK[T] = \bigoplus \limits_{u \in \mathfrak X(T)} \KK\chi^u.
\]

A $T$-variety $X$ is said to be \textit{toric} if it is irreducible, normal, and has an open $T$-orbit.
It is well known that affine toric $T$-varieties are parametrized by pairs $(M, \mathcal G)$ where $M$ is a sublattice of $\mathfrak X(T)$ and $\mathcal G$ is a finitely generated convex cone of full dimension in $M_\QQ = M \otimes_\ZZ \QQ$.
More precisely, given such a pair $(M, \mathcal G)$, the corresponding affine toric $T$-variety is $X = \Spec A$ where $A = \bigoplus \limits_{u \in M \cap \mathcal G} \KK \chi^u$.
The monoid $\Gamma = M \cap \mathcal G$ is said to be the \textit{weight monoid} of~$X$, it is characterized as the set of $T$-weights of the algebra~$\KK[X]$.
Note that $M = \ZZ \Gamma$ and $\mathcal G = \QQ_{\ge0}\Gamma$, which gives a direct way to recover the pair $(M,\mathcal G)$ from~$X$.

\begin{remark}
In the definition of a toric $T$-variety it is often additionally required that the action of~$T$ be effective.
This corresponds to $M = \mathfrak X(T)$ in our notation.
\end{remark}

Let $X$ be a toric $T$-variety corresponding to a pair $(M,\mathcal G)$ as above.
Put $N = \Hom_\ZZ(M,\ZZ)$ and retain the notation of~\S\,\ref{subsec_Dem_roots}.

Let $\rho \in \mathcal E^1$ and $e \in \mathfrak R_\rho(\mathcal E)$.
One defines a $T$-normalized LND $\partial_e$ of weight~$e$ on~$A$ by the rule
\begin{equation} \label{eqn_T-norm_LND}
\partial_e(\chi^u)=\langle \rho, u\rangle \chi^{u+e}.
\end{equation}
Let $H_e$ denote the $T$-root subgroup on~$X$ corresponding to~$\partial_e$; see Proposition~\ref{prop_Ga-actions}.
It is known from \cite[Theorem~2.7]{L1} that every nonzero $T$-normalized LND on~$\KK[X]$ has the form $c\partial_e$ for some $e \in \mathfrak R(\mathcal E)$ and $c \in \KK^\times$, which yields

\begin{theorem} \label{thm_T-root_subgroups}
The following assertions hold.
\begin{enumerate}[label=\textup{(\alph*)},ref=\textup{\alph*}]
\item \label{thm_T-root_subgroups_a}
The map $e \mapsto \partial_e$ is a bijection between $\mathfrak R(\mathcal E)$ and the nonzero $T$-normalized LNDs on $\KK[X]$ modulo proportionality.
\item \label{thm_T-root_subgroups_b}
The map $e \mapsto H_e$ is a bijection between $\mathfrak R(\mathcal E)$ and the $T$-root subgroups on~$X$.
\end{enumerate}
\end{theorem}

By construction, $\Ker \partial_e = \bigoplus \limits_{u\in\mathcal F_\rho} \KK\chi^u$.
Combining this with Proposition~\ref{prop_equivalence} we find that, given $e_1,e_2 \in \mathfrak R(\mathcal E)$, the $T$-root subgroups $H_{e_1}$ and $H_{e_2}$ are equivalent if and only if the associated elements $\rho_1, \rho_2 \in \mathcal E^1$ coincide.

Recall from \cite[Theorem~3.2.6]{CLS} that $T$-stable prime divisors on~$X$ are in bijection with~$\mathcal E^1$.
Under this bijection, an element $\rho \in \mathcal E^1$ corresponds to a $T$-stable prime divisor $D_\rho$ whose ideal $I(D_\rho)$ in~$\KK[X]$ equals $\bigoplus \limits_{u \in \Gamma \setminus \mathcal F_\rho} \KK\chi^u$.
Now for every $e \in \mathfrak R_\rho(\mathcal E)$ we have $\KK[X] = \Ker \partial_e \oplus I(D_\rho)$, therefore $I(D_\rho)$ is $\partial_e$-unstable and hence $H_e$-unstable.
Thus $H_e$ moves the divisor~$D_{\rho}$.

\smallskip

We come to the following result, which can be extracted from~\cite[\S\,2]{AKZ1}.

\begin{theorem} \label{thm_toric_equivalence}
Let $X$ be an affine toric $T$-variety.
Then equivalence classes of $T$-root subgroups on $X$ are in bijection with $T$-stable prime divisors on~$X$.
More precisely, in the notation introduced above, the equivalence class corresponding to a divisor $D_{\rho}$ consists of all $T$-root subgroups that move~$D_{\rho}$, and these subgroups are defined by all Demazure roots in~$\mathfrak R_\rho(\mathcal E)$.
\end{theorem}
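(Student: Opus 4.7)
The plan is essentially to assemble the ingredients collected in the preceding paragraphs of \S\,\ref{subsec_RS_on_ATV}; only one additional verification is required. First, I would invoke \cite[Theorem~2.7]{L1} to ensure that every homogeneous LND on $A = \KK[X]$ has the form $c\partial_e$ for some $c \in \KK^\times$ and $e \in \mathfrak R(\mathcal E)$, so that $e \mapsto H_e$ is a bijection between Demazure roots of $\mathcal E$ and $T$-root subgroups on~$X$. Using the disjoint partition $\mathfrak R(\mathcal E) = \bigsqcup_{\rho \in \mathcal E^1} \mathfrak R_\rho(\mathcal E)$, every $T$-root subgroup acquires a well-defined associated element $\rho \in \mathcal E^1$. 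Since $\Ker \partial_e = \bigoplus_{u \in \mathcal F_\rho} \KK\chi^u$ depends only on~$\rho$ and not on the choice of $e \in \mathfrak R_\rho(\mathcal E)$, Proposition~\ref{prop_equivalence} gives $H_{e_1} \sim H_{e_2}$ precisely when their associated rays coincide. This identifies equivalence classes of $T$-root subgroups with $\mathcal E^1$.

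Next, I would compose this with the bijection $\mathcal E^1 \leftrightarrow \{T\text{-stable prime divisors on }X\}$ from \cite[Theorem~3.2.6]{CLS}, $\rho \mapsto D_\rho$, to obtain the asserted bijection between equivalence classes and $T$-stable prime divisors. By construction, the class corresponding to $D_\rho$ consists of exactly those $H_e$ with $e \in \mathfrak R_\rho(\mathcal E)$.

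It remains to match the equivalence-class description with the geometric one through the notion of moving a divisor. The discussion preceding the theorem already shows that $H_e$ moves $D_\rho$ whenever $e \in \mathfrak R_\rho(\mathcal E)$, using $A = \Ker \partial_e \oplus I(D_\rho)$. The only genuinely new check is that $H_e$ does \emph{not} move any other $T$-stable prime divisor $D_{\rho'}$ with $\rho' \in \mathcal E^1 \setminus \{\rho\}$; this is where the defining inequalities of $\mathfrak R_\rho(\mathcal E)$ enter. Concretely, I would verify that $I(D_{\rho'}) = \bigoplus_{u \in \Gamma \setminus \mathcal F_{\rho'}} \KK\chi^u$ is $\partial_e$-stable: for $u \in \Gamma$ with $\langle \rho', u \rangle > 0$, the computation
\[
\langle \rho', u + e \rangle = \langle \rho', u \rangle + \langle \rho', e \rangle \ge \langle \rho', u \rangle > 0
\]
uses $\langle \rho', e \rangle \ge 0$ from the definition of $\mathfrak R_\rho(\mathcal E)$, so whenever $\partial_e(\chi^u) = \langle \rho, u \rangle \chi^{u+e}$ is nonzero, the exponent $u+e$ still lies outside $\mathcal F_{\rho'}$, and hence the image lies in $I(D_{\rho'})$. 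Consequently $H_e$ preserves $D_{\rho'}$, completing the identification of the equivalence class of $H_e$ with the set of $T$-root subgroups that move~$D_\rho$.

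There is no serious obstacle in this argument: the substantive work has already been placed in the surrounding paragraphs, and the theorem is essentially a packaging statement. The only subtlety worth articulating carefully in the write-up is the dichotomy produced by the sign condition built into $\mathfrak R_\rho(\mathcal E)$, which simultaneously forces $H_e$ to move $D_\rho$ (via $\langle \rho, e \rangle = -1$) and to fix every other $D_{\rho'}$ (via $\langle \rho', e \rangle \ge 0$).
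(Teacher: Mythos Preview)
Your proposal is correct and matches the paper's approach: the paper presents this theorem without a separate proof, as a summary of the preceding paragraphs, and you have correctly assembled those ingredients. Your one addition---the explicit check that $H_e$ does not move $D_{\rho'}$ for $\rho' \ne \rho$---is indeed needed for the precise statement and is not spelled out in the paper; your verification via $\langle \rho', u+e \rangle \ge \langle \rho', u \rangle > 0$ is clean and correct. (An alternative route to this same conclusion is to observe that since $H_e$ moves $D_\rho$ it cannot preserve the open $T$-orbit---a connected group preserving the complement would preserve each irreducible component---and then invoke Proposition~\ref{prop_open_orbit} to conclude that $H_e$ moves exactly one $T$-stable prime divisor.)
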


We finish this subsection with an example illustrating all the concepts introduced above.

\begin{example}[The affine space as a toric variety] \label{ex1}
Consider the affine space $X = \KK^n$ equipped with the standard diagonal action of the torus $T = (\KK^{\times})^n$ given by $t\cdot (x_1,\ldots,x_n) = (t_1x_1,\ldots, t_nx_n)$ for all $t = (t_1,\ldots, t_n) \in T$.
Then $X$ is a toric $T$-variety with an effective action of~$T$, so that $M = \mathfrak X(T)$.
In what follows, we identify $M$ with $\ZZ^n$ by choosing the basis $\chi_1,\ldots,\chi_n$ in $\mathfrak X(T)$ where $\chi_i(t) = t_i^{-1}$ for all $i = 1,\ldots,n$ and $t=(t_1,\ldots,t_n) \in T$.
The dual lattice $N$ will be also identified with $\ZZ^n$ via the dot product.
Modulo the above identifications, we have
$M = N = \ZZ^n$, $M_\QQ = N_\QQ = \QQ^n$, $\Gamma = \ZZ_{\ge0}^n$, $\mathcal G = \mathcal E = \QQ^n_{\geqslant0}$, $\mathcal E^1 = \lbrace \rho_1,\ldots, \rho_n \rbrace$ with
$\rho_1=(1,0,\ldots,0), \ldots, \rho_n=(0,\ldots,0,1)$.
Then
\[
\mathfrak{R}_{\rho_i}(\mathcal E) = \{(c_1,\ldots,c_{i-1},-1,c_{i+1},\ldots,c_n) \mid c_j\in\ZZ_{\geqslant0}\}
\]
for all $i = 1,\ldots, n$.
For $n=2$, the picture looks like
\bigskip
\vspace{0.05cm}
\begin{center}
\begin{picture}(100,75)
\multiput(50,15)(15,0){5}{\circle*{3}}
\multiput(35,30)(0,15){4}{\circle*{3}}
\put(20,30){\vector(1,0){100}} \put(50,5){\vector(0,1){80}}
\put(17,70){$\mathfrak{R}_{\rho_1}$} \put(115,7){$\mathfrak{R}_{\rho_2}$}
\put(100,70){$M_{\mathbb{Q}}=\mathbb{Q}^2$} \linethickness{0.5mm}
\put(50,30){\line(1,0){65}} \put(50,30){\line(0,1){50}}
\end{picture}
\end{center}
Let $x_1,\ldots,x_n$ be the coordinate functions on~$X$, so that $\KK[X] = \KK[x_1,\ldots,x_n]$.
It is easy to see that the $T$-normalized LND on $\KK[X]$ corresponding to a Demazure root
$e=(c_1,\ldots,c_{i-1},-1,c_{i+1},\ldots,c_n)\in \mathfrak{R}_{\rho_i}$ is
\[
\partial_e=x_1^{c_1}\ldots x_{i-1}^{c_{i-1}} x_{i+1}^{c_{i+1}}\ldots x_n^{c_n}\frac{\partial}{\partial x_i}.
\]
This LND gives rise to the $\GG_a$-action
\[
(s,(x_1,\ldots,x_n))\mapsto (x_1,\ldots, x_{i-1}, x_i + sx_1^{c_1}\ldots x_{i-1}^{c_{i-1}} x_{i+1}^{c_{i+1}}\ldots x_n^{c_n}, x_{i+1},\ldots,x_n)
\]
on~$X$.
The corresponding $T$-root subgroup $H_e$ moves the divisor $D_i=\{x_i=0\}$.
\end{example}

\section{Generalities on affine spherical varieties}

\label{sec3}

\subsection{Notation for reductive groups}
\label{subsec_red_groups}

In this subsection we introduce some general notation for reductive groups that will be used in the remaining part of this paper.

In what follows, $G$ denotes a connected reductive algebraic group and $(G,G)$ stands for the derived subgroup of~$G$.
Fix a Borel subgroup $B \subseteq G$ along with a maximal torus $T \subseteq B$.
There is a unique Borel subgroup $B^-$ of $G$ such that $B \cap B^- = T$, it is said to be opposite to~$B$.
Let $U$ (resp.~$U^-$) be the unipotent radical of~$B$ (resp.~$B^-$); both $U$ and $U^-$ are maximal unipotent subgroups of~$G$.
We also put $\mathfrak u = \Lie U$.

We identify the groups $\mathfrak X(B)$ and $\mathfrak X(T)$ via restricting characters from~$B$ to~$T$.
Similarly, we regard $\mathfrak X(G)$ as a subgroup of~$\mathfrak X(T)$.

Let $\Delta \subseteq \mathfrak X(T)$ be the root system of~$G$ with respect to $T$ and let $\Pi \subseteq \Delta$ be the set of simple roots with respect to~$B$.
For every $\alpha \in \Delta$, we let $\alpha^\vee \in \Hom_\ZZ(\mathfrak X(T), \ZZ)$ be the corresponding dual root and let  $U_\alpha \subseteq G$ be the corresponding one-parameter unipotent subgroup.

Let $\Lambda^+ \subseteq \mathfrak X(T)$ be the monoid of dominant weights with respect to~$B$.
Recall that $\Lambda^+$ is in bijection with the (isomorphism classes of) simple finite-dimensional $G$-modules.
Under this bijection, every $\lambda \in \Lambda^+$ corresponds to the simple $G$-module with highest weight~$\lambda$.

\subsection{Spherical varieties and related notions}
\label{subsec_sv&rn}

A $G$-variety $X$ is said to be \textit{spherical} if $X$ is irreducible, normal, and possesses an open $B$-orbit.

Recall that a rational $G$-module $W$ is said to be \textit{multiplicity free} if each simple $G$-module occurs in~$W$ with multiplicity at most one.

\begin{theorem}[{\cite[Theorem~2]{VK}}] \label{thm_VK}
Let $X$ be a normal irreducible $G$-variety.
The following assertions hold.
\begin{enumerate}[label=\textup{(\alph*)},ref=\textup{\alph*}]
\item
If $X$ is spherical then the $G$-module $\KK[X]$ is multiplicity free.

\item
If the $G$-module $\KK[X]$ is multiplicity free and $X$ is quasi-affine then $X$ is spherical.
\end{enumerate}
\end{theorem}

In what follows we let $X$ be a spherical $G$-variety.
The \textit{weight lattice} (resp. \textit{weight monoid}) of $X$ is the set $M = M(X)$ (resp.~$\Gamma = \Gamma(X)$) consisting of weights of $B$-semiinvariant functions in $\KK(X)$ (resp.~$\KK[X]$).
Clearly, $M$ is a sublattice of $\mathfrak X(T)$ and $\Gamma$ is a submonoid of~$M \cap \Lambda^+$.
Thanks to Theorem~\ref{thm_VK}, for every $\lambda \in \Gamma$ there is a unique simple $G$-submodule $\KK[X]_\lambda \subseteq \KK[X]$ with highest weight~$\lambda$, and one has the decomposition
\begin{equation} \label{eqn_decomp}
\KK[X] = \bigoplus \limits_{\lambda \in \Gamma} \KK[X]_\lambda.
\end{equation}

Since $X$ has an open $B$-orbit, for every $\lambda \in M$ there is a unique up to proportionality $B$-semiinvariant function $f_\lambda \in \KK(X)$ of weight~$\lambda$.
Note that for $\lambda \in \Gamma$ the function $f_\lambda$ is a highest-weight vector in $\KK[X]_\lambda$.
Requiring each function~$f_\lambda$ to take the value~$1$ at a fixed point of the open $B$-orbit in~$X$, we may assume $f_\lambda \cdot f_\mu = f_{\lambda+ \mu}$ for all $\lambda, \mu \in \Gamma$.

Let $M_\QQ$, $N$, $N_\QQ$ be as in~\S\,\ref{subsec_Dem_roots}.
Put also $\mathcal G = \QQ_{\ge0}\Gamma \subseteq M_\QQ$ and $\mathcal E = \mathcal G^\vee \subseteq N_\QQ$.
Every discrete $\QQ$-valued valuation $v$ of the field $\KK(X)$ vanishing on~$\KK^\times$ determines an element $\varphi(v) \in N_\QQ$ such that $ \langle \varphi(v), \lambda \rangle = v(f_\lambda)$ for all $\lambda \in M$.
It is known (see~\cite[\S\,7.4]{LV} or~\cite[Corollary~1.8]{Kn91}) that the restriction of the map $v \mapsto \varphi(v)$ to the set of $G$-invariant discrete $\QQ$-valued valuations of $\KK(X)$ vanishing on~$\KK^\times$ is injective; we denote its image by~$\mathcal V = \mathcal V(X)$.
Moreover, $\mathcal V \subseteq N_\QQ$ is a finitely generated convex cone of full dimension containing the image of the antidominant Weyl chamber; see~\cite[Proposition~3.2 and Corollary~4.1,~i)]{BriP}
or~\cite[Corollary~5.3]{Kn91}.
The cone $\mathcal V$ is called the \textit{valuation cone} of~$X$.
Elements of the set
\begin{equation} \label{eqn_sph_roots}
\Sigma = \Sigma(X) := - (\mathcal V^\vee)^1
\end{equation}
are called \textit{spherical roots} of~$X$.
The above discussion implies that every spherical root is a nonnegative linear combination of simple roots.

Since the group $B$ is solvable, the open $B$-orbit in~$X$ is affine, therefore its complement is a union of a finite number of prime divisors (which are automatically $B$-stable).
We note that the open $G$-orbit in $X$ need not be affine and in general its complement may contain irreducible components of codimension~$\ge 2$.

Let $\mathcal D^B = \mathcal D^B(X)$ (resp. $\mathcal D^G = \mathcal D^G(X)$) be the set of $B$-stable (resp. $G$-stable) prime divisors on~$X$ and put $\mathcal D = \mathcal D(X) = \mathcal D^B \setminus \mathcal D^G$.
Elements of $\mathcal D$ are called \textit{colors} of~$X$.
For every $D \in \mathcal D^B$, let $v_D$ be the valuation of $\KK(X)$ defined by~$D$, i.e. $v_D(f) = \ord_D(f)$ for every $f \in \KK(X) \setminus \lbrace 0 \rbrace$.
We define the map
\begin{equation} \label{eqn_varkappa}
\varkappa \colon \mathcal D^B \to N, \quad D \mapsto \varphi(v_D),
\end{equation}
so that $\langle \varkappa(D), \lambda \rangle = \ord_D f_\lambda$ for all $D \in \mathcal D^B$ and $\lambda \in M$.
It follows from the definitions that
\begin{equation} \label{eqn_D^G_in_V}
\varkappa(\mathcal D^G) \subseteq \mathcal V.
\end{equation}
Since $X$ is normal, a~regular function on the open~$B$-orbit in~$X$ is regular on the whole $X$ if and only if it has no poles along all the $B$-stable prime divisors on~$X$, therefore
\begin{equation} \label{eqn_Gamma(X)}
\Gamma = \lbrace \lambda \in M \mid \langle \varkappa(D), \lambda \rangle \ge 0 \ \text{for all} \ D \in \mathcal D^B \rbrace.
\end{equation}
In particular, it follows that the monoid $\Gamma$ is finitely generated and saturated, where the latter means that $\Gamma$ is the intersection of a lattice with a cone.
An equivalent form of~(\ref{eqn_Gamma(X)}) is as follows:
\begin{equation} \label{eqn_E(X)}
\mathcal E = \QQ_{\ge 0} \lbrace \varkappa(D) \mid D \in \mathcal D^B \rbrace.
\end{equation}
In particular, for every element $\rho \in \mathcal E^1$ there is $D \in \mathcal D^B$ such that $\varkappa(D)$ is a positive multiple of~$\rho$.

Let $O$ be the open $G$-orbit in~$X$, which is a homogeneous spherical $G$-variety.
Then
\begin{equation} \label{eqn_Gamma(O)}
\Gamma(O) = \lbrace \lambda \in M \mid \langle \varkappa(D), \lambda \rangle \ge 0 \ \text{for all} \ D \in \mathcal D \rbrace.
\end{equation}
Observe that the algebra $\KK[X]$ is identified with the subalgebra $\bigoplus \limits_{\lambda \in \Gamma} \KK[O]_\lambda$ of~$\KK[O]$.

To finish this subsection, we single out a certain subclass of colors of~$X$.
For every $\alpha \in \Pi$, let $P_\alpha \supseteq B$ be the minimal parabolic subgroup of~$G$ containing~$U_{-\alpha}$ and put $\mathcal D_\alpha := \lbrace D \in \mathcal D \mid D \ \text{is moved by} \ P_\alpha \rbrace$.
Then $\mathcal D = \bigcup \limits_{\alpha \in \Pi} \mathcal D_\alpha$.
The next result is extracted from~\cite[\S\S\,2.7,\,3.4]{Lu97}; see also~\cite[\S\,30.10]{Tim}.

\begin{proposition} \label{prop_types_of_colors}
For every $\alpha \in \Pi$, the following assertions hold.
\begin{enumerate}[label=\textup{(\alph*)},ref=\textup{\alph*}]
\item \label{prop_types_of_colors_a}
$|\mathcal D_\alpha| \le 2$ and the equality is attained if and only if $\alpha \in \Sigma$.

\item \label{prop_types_of_colors_b}
If $|\mathcal D_\alpha| = 2$ and $\mathcal D_\alpha \cap \mathcal D_\beta \ne \varnothing$ for some $\beta \in \Pi$ then $|\mathcal D_\beta| = 2$.

\item \label{prop_types_of_colors_c}
If $|\mathcal D_\alpha| = 1$ and $\mathcal D_\alpha = \lbrace D \rbrace$ then there is $c \in \lbrace 1, \frac12 \rbrace$ such that $\langle \varkappa(D), \lambda \rangle = c\langle \alpha^\vee, \lambda \rangle$ for all $\lambda \in M$.
\end{enumerate}
\end{proposition}

In view of parts~(\ref{prop_types_of_colors_a}),\,(\ref{prop_types_of_colors_b}) of the above proposition, the following notion is well defined.

\begin{definition} \label{def_type_a}
A color $D \in \mathcal D$ is of \textit{type~$a$} if $D \in \mathcal D_\alpha$ for some $\alpha \in \Pi \cap \Sigma$.
\end{definition}

\begin{remark} \label{rem_colors_of_type_a}
It follows from Definition~\ref{def_type_a} and Proposition~\ref{prop_types_of_colors}(\ref{prop_types_of_colors_a}) that $X$ has colors of type~$a$ if and only if $\Pi \cap \Sigma \ne \varnothing$.
\end{remark}

Combining Proposition~\ref{prop_types_of_colors}(\ref{prop_types_of_colors_a},\,\ref{prop_types_of_colors_c}) with~(\ref{eqn_Gamma(O)}) we obtain

\begin{proposition} \label{prop_Gamma(O)}
Suppose that $\Pi \cap \Sigma = \varnothing$.
Then $\Gamma(O) = M \cap \Lambda^+$.
\end{proposition}

\begin{remark}
As follows from the definitions, for given $D \in \mathcal D$ and $\alpha \in \Pi$ the condition $D \in \mathcal D_\alpha$ holds if and only if $D$ is moved by~$U_{-\alpha}$, which induces a $T$-root subgroup on~$X$ that is not a $B$-root subgroup.
If in addition $\alpha \in \Sigma$ then we know from Proposition~\ref{prop_types_of_colors}(\ref{prop_types_of_colors_a}) that $U_{-\alpha}$ moves exactly two colors; compare with Proposition~\ref{prop_open_orbit}.
\end{remark}

\subsection{Affine spherical varieties}
\label{subsec_ASV}

In this subsection we present several notions and results specific to affine spherical varieties.
Until the end of this subsection we assume that $X$ is an affine spherical $G$-variety and retain the notation of \S\,\ref{subsec_sv&rn}.

The following result is well known; see, e.g., \cite[Proposition~5.14]{Tim} for a proof.
It implies that the cone $\mathcal G$ has full dimension in~$M_\QQ$ and hence the cone $\mathcal E$ is strictly convex, which will be important in our subsequent considerations.

\begin{proposition} \label{prop_ASV_WL}
One has $M = \ZZ\Gamma$.
\end{proposition}

The algebra $\KK[X]^U$ decomposes as
\begin{equation} \label{eqn_K[X]^U}
\KK[X]^U = \bigoplus \limits_{\lambda \in \Gamma} \KK f_\lambda.
\end{equation}
By~\cite[Theorem~3.13]{PV}, $\KK[X]^U$ is finitely generated, hence we can consider the variety $Z:=\Spec\KK[X]^U$.
Observe that $Z$ is a toric $T$-variety with weight monoid~$\Gamma$.
The inclusion $\KK[X]^U \subseteq \KK[X]$ gives rise to a dominant $T$-equivariant morphism
\begin{equation} \label{eqn_pi_U}
\pi_U\colon X\to Z.
\end{equation}

\begin{proposition}
A generic fiber of~$\pi_U$ is a single $U$-orbit.
\end{proposition}

\begin{proof}
The discussion in~\S\,\ref{subsec_equivalence} implies that the functions in $\KK[X]^U$ separate generic $U$-orbits in~$X$.
Since all $U$-orbits in~$X$ are closed (see~\cite[\S\,1.3]{PV} or~\cite[Lemma~3.4]{Tim}), this yields the assertion.
\end{proof}

Since $\KK[X]^G = \KK$, it follows that $X$ contains a unique closed $G$-orbit.
Then similarly to~\cite[Lemma~2.4]{Kn91} one proves the following result.

\begin{proposition} \label{prop_rho_G-stable}
The restriction of $\varkappa$ to $\mathcal D^G$ is an injective map to $\mathcal E^1$ and its image is $\lbrace \rho \in \mathcal E^1 \mid \QQ_{\ge0}\rho \cap \varkappa(\mathcal D) = \varnothing \rbrace$.
\end{proposition}

Given $\lambda,\mu, \nu \in \Gamma$ such that the linear span of $\KK[X]_\lambda \cdot \KK[X]_\mu$ contains $\KK[X]_\nu$, the expression $\lambda + \mu - \nu$ is said to be a \textit{tail} of~$X$.
The cone $\mathcal T = \mathcal T(X) \subseteq M_\QQ$ generated by all tails of~$X$ is said to be the \textit{tail cone} of~$X$.

\begin{proposition}[{see~\cite[Lemma~6.6, iii)]{Kn96}}] \label{prop_tail_cone}
Both cones $\mathcal T$ and $\mathcal T(O)$ coincide with $-\mathcal V^\vee$.
\end{proposition}

In view of~(\ref{eqn_sph_roots}) and~(\ref{eqn_D^G_in_V}) we obtain

\begin{corollary} \label{crl_tail_cone}
The following assertions hold.
\begin{enumerate}[label=\textup{(\alph*)},ref=\textup{\alph*}]
\item \label{crl_tail_cone_a}
$\Sigma = \mathcal T^1$.

\item \label{crl_tail_cone_b}
For all $D \in \mathcal D^G$ and $\tau \in \mathcal T$, one has $\langle \varkappa(D), \tau \rangle \le 0$.
\end{enumerate}
\end{corollary}

\subsection{Affine horospherical varieties}
\label{subsec_aff_horo}

An irreducible $G$-variety $X$ is said to be \textit{horospherical} if the stabilizer of a point in general position in~$X$ contains a maximal unipotent subgroup of~$G$.
A normal horospherical $G$-variety is spherical if and only if it contains an open $G$-orbit.
By abuse of terminology, from now on and till the end of this paper all horospherical $G$-varieties are assumed to be spherical.

Affine horospherical varieties were studied in detail in~\cite{VP72} under the name ``$S$-varieties''.
The following well-known result is deduced essentially from~\cite[Theorem~6]{VP72}.

\begin{theorem} \label{thm_horospherical}
Given an affine spherical $G$-variety $X$, the following conditions are equivalent.
\begin{enumerate}[label=\textup{(\arabic*)},ref=\textup{\arabic*}]
\item
$X$ is horospherical.

\item \label{thm_horospherical_2}
$\mathcal T(X) = \lbrace 0 \rbrace$ or, equivalently, $\KK[X]_\lambda \cdot \KK[X]_\mu \subseteq \KK[X]_{\lambda + \mu}$ for all $\lambda, \mu \in \Gamma(X)$.
\end{enumerate}
Moreover, the map $X \mapsto \Gamma(X)$ is a bijection between affine horospherical $G$-varieties, considered up to $G$-equivariant isomorphisms, and submonoids in~$\Lambda^+$ that are finitely generated and saturated.
\end{theorem}

\begin{remark}
Condition~(\ref{thm_horospherical_2}) of Theorem~\ref{thm_horospherical} means that decomposition~\textup{(\ref{eqn_decomp})} is a grading.
\end{remark}

\begin{remark} \label{rem_Sigma_is_empty}
Under the conditions of Theorem~\ref{thm_horospherical}, one has $\Sigma = \varnothing$ by Corollary~\ref{crl_tail_cone}(\ref{crl_tail_cone_a}).
\end{remark}

\begin{remark} \label{rem_AHV_construction}
Given a finitely generated and saturated submonoid $\Gamma \subseteq \Lambda^+$, the affine horospherical $G$-variety $X$ with $\Gamma(X) = \Gamma$ can be constructed in the following simple way; see~\cite[\S\,3]{VP72}.
Choose a finite generating set $\mathrm E$ for $\Gamma$.
For every $\lambda \in \mathrm E$, let $V_\lambda$ be the simple $G$-module with highest weight~$\lambda$ and choose a highest-weight vector $v_\lambda$ in the dual $G$-module~$V_\lambda^*$.
Put
\[
V = \bigoplus \limits_{\lambda \in \mathrm E} V_\lambda^* \ \text{ and } \ v = \sum \limits_{\lambda \in \mathrm E} v_\lambda \in V.
\]
Then $X$ can be realized as the closure of the orbit $G\cdot v$ in~$V$.
\end{remark}

\subsection{Classification results for affine spherical varieties}
\label{subsec_ASV_comb_descr}

The following result was first proved in~\cite[Theorem~1.2]{Lo09}; see also~\cite[Corollary~4.16]{ACF1} for a different proof.

\begin{theorem}
Up to a $G$-equivariant isomorphism, an affine spherical $G$-variety $X$ is uniquely determined by the pair $(\Gamma(X), \Sigma(X))$.
\end{theorem}

Given a finitely generated and saturated submonoid~$\Gamma \subseteq \Lambda^+$, we already know from Theorem~\ref{thm_horospherical} that there exists an affine horospherical $G$-variety $X_0$ with $\Gamma(X_0) = \Gamma$, in which case $\Sigma(X_0) = \varnothing$ by Remark~\ref{rem_Sigma_is_empty}.
A complete description of all possible sets~$\Sigma$ for which there exists an affine spherical $G$-variety $X$ with $\Gamma(X) = \Gamma$ and $\Sigma(X) = \Sigma$ was obtained in~\cite[Theorem~6.9]{ACF2} and \cite[Proposition~2.13]{BvS}; there are always only finitely many such sets~$\Sigma$.
The above-cited sources also explain how to recover $\mathcal D(X)$ as an abstract set equipped with the map $\varkappa \colon \mathcal D(X) \to N$ starting from the pair $(\Gamma(X), \Sigma(X))$.
The set $\mathcal D^G(X)$ along with the map $\varkappa \colon \mathcal D^G(X) \to N$ is then recovered by Proposition~\ref{prop_rho_G-stable}.

\subsection{Notation for various objects assigned to an affine spherical variety}

\label{subsec_ASV_notation}

For convenience of the reader and future reference, in this subsection we list the notation for various objects assigned to an affine spherical $G$-variety~$X$.
This notation will be systematically used in the rest of our paper.

$\Gamma$ is the weight monoid of~$X$

$O$ is the open $G$-orbit in~$X$

$\Gamma(O)$ is the weight monoid of~$O$

$Z = \Spec \KK[X]^U$; this is the affine toric $T$-variety with weight monoid~$\Gamma$

$M$ is the weight lattice of~$X$; $M = \ZZ \Gamma$ by Proposition~\ref{prop_ASV_WL}

$M_\QQ = M \otimes_\ZZ \QQ$ is the rational vector space spanned by~$M$

$N = \Hom_\ZZ(M, \ZZ)$ is the dual lattice of~$M$

$N_\QQ = N \otimes_\ZZ \QQ$ is the dual vector space of~$M_\QQ$

$\iota \colon \Hom_\ZZ(\mathfrak X(T), \ZZ) \to N$ is the restriction map

$\mathcal G = \QQ_{\ge0}\Gamma \subseteq M_\QQ$ is the cone in~$M_\QQ$ generated by~$\Gamma$

$\mathcal E = \mathcal G^\vee \subseteq N_\QQ$ is the cone in~$N_\QQ$ dual to~$\mathcal G$

$\mathcal E^1$ is the set of primitive elements $\rho \in N$ such that $\QQ_{\ge0}\rho$ is a ray of~$\mathcal E$

$\mathcal D^B$ is the set of $B$-stable prime divisors in~$X$

$\mathcal D^G \subseteq \mathcal D^B$ is the set of $G$-stable prime divisors in~$X$

$\mathcal D = \mathcal D^B \setminus \mathcal D^G$ is the set of colors of~$X$

$\varkappa \colon \mathcal D^B \to N$ is the map given by~(\ref{eqn_varkappa})

$\Sigma \subseteq M$ is the set of spherical roots of~$X$

$\mathcal T \subseteq M_\QQ$ is the tail cone of~$X$ and also of~$O$

$\mathcal T^\perp = \lbrace \rho \in N_\QQ \mid \langle \rho, \tau \rangle = 0 \ \text{for all} \ \tau \in \mathcal T \rbrace$

$f_\lambda \in \KK(X)$ is a fixed $B$-semiinvariant function of weight~$\lambda \in M$; we assume $f_\lambda \cdot f_\mu = f_{\lambda+\mu}$ for all $\lambda, \mu \in M$

\section{Basic properties of \texorpdfstring{$B$}{B}-root subgroups}
\label{sec4}

Let $G$ be a connected reductive algebraic group, fix a Borel subgroup $B \subseteq G$, and retain all the notation of~\S\,\ref{subsec_red_groups}.
Throughout this section, unless otherwise specified, $X$ denotes an affine spherical $G$-variety.
For objects related to~$X$, we use the notation listed in~\S\,\ref{subsec_ASV_notation}.

In all examples presented in this section, $X$ is a \textit{spherical $G$-module}, i.e. a finite-dimen\-sional $G$-module that is spherical as a $G$-variety.
A plenty of useful information on spherical $G$-modules, including their classification and combinatorial invariants, can be found in~\cite{Kn98}.
In all our examples where $X = \KK^n$ for some~$n$ we use the following conventions: $G$ is a subgroup of $\GL_n$ acting linearly on~$X$; the group $B$ (resp. $U$, $T$) consists of all upper triangular (resp. upper unitriangular, diagonal) matrices contained in~$G$; $x_i$ is the $i$th coordinate function on~$X$.

\subsection{First properties of \texorpdfstring{$B$}{B}-root subgroups}
\label{subsec_first_properties}

Throughout this subsection, $X$ is an affine irreducible $G$-variety (not necessarily spherical).
Let $H$ be a $B$-root subgroup on $X$ of weight~$\chi_H$ and let $\partial$ be the corresponding $B$-normalized LND of~$\KK[X]$.

\begin{proposition} \label{prop_weight_is_dom}
The following assertions hold.
\begin{enumerate}[label=\textup{(\alph*)},ref=\textup{\alph*}]
\item \label{prop_weight_is_dom_a}
$\chi_H \in \Lambda^+$.
\item \label{prop_weight_is_dom_b}
$H$ is $G$-normalized \textup(and hence a $G$-root subgroup on~$X$\textup) if and only if $\chi_H \in \mathfrak X(G)$.
\end{enumerate}
\end{proposition}

\begin{proof}
Consider the $G$-module $\Der(\KK[X])$ of all derivations of the algebra~$\KK[X]$.
Since the $G$-module $\KK[X]$ is rational (see~\cite[Lemma~1.4]{PV}) and any derivation is uniquely determined by its values on some finite-dimensional $G$-invariant generating subspace in $\KK[X]$, we conclude that the $G$-module $\Der(\KK[X])$ is rational as well.
Now $\partial$ is a $B$-semiinvariant vector in this module, which yields~(\ref{prop_weight_is_dom_a}).
Clearly, $\partial$ is $G$-normalized if and only if it generates a one-dimensional $G$-submodule in $\Der(\KK[X])$.
Since the latter condition is equivalent to $\chi_H \in \mathfrak X(G)$, we get~(\ref{prop_weight_is_dom_b}).
\end{proof}

\begin{proposition} \label{prop_U^-}
For every $G$-stable subspace $V \subseteq \KK[X]$ generating $\KK[X]$ as an algebra, the derivation $\partial$ is uniquely determined by its restriction to $V^{U^-}$.
In particular, $\partial$ is uniquely determined by its restriction to $\KK[X]^{U^-}$ and also to any system of generators of~$\KK[X]^{U^-}$.
\end{proposition}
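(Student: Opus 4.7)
The plan is to exploit the normalization of $H$ by $B$ in order to force the maximal unipotent subgroup $U$ to commute with~$\partial$, and then to combine this with the standard fact that the $U$-orbit of a lowest-weight vector spans any simple $G$-module.

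The first and crucial observation is a commutation identity. Since $B$ normalizes the one-dimensional unipotent subgroup $H \subset \Aut(X)$, conjugation yields a homomorphism $B \to \Aut(H) \cong \KK^\times$, namely the character~$\chi(H)$. Its restriction to~$U$ must be trivial, because the unipotent group~$U$ admits no nontrivial characters. Hence $U$ centralizes~$H$ inside~$\Aut(X)$, which translated to the LND reads
\[
\partial(u \cdot f) = u \cdot \partial(f) \qquad \text{for all } u \in U,\; f \in \KK[X].
\]

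Next, using reductivity of~$G$, I would decompose the locally finite $G$-module $V$ into a direct sum of simple $G$-submodules $\bigoplus_i W_i$. Then $V^{U^-} = \bigoplus_i W_i^{U^-}$, each summand being the one-dimensional line $\KK w_i$ spanned by a lowest-weight vector of~$W_i$. By the standard fact that $U \cdot w_i$ spans the simple module~$W_i$, combined with the commutation identity above, one obtains $\partial(u \cdot w_i) = u \cdot \partial(w_i)$ for every $u \in U$; so $\partial|_{W_i}$, and therefore $\partial|_V$, is determined by the single vector $\partial(w_i)$, i.e., by $\partial|_{V^{U^-}}$. Since $V$ generates $\KK[X]$ as an algebra and $\partial$ is a derivation, this propagates uniquely to determine $\partial$ on all of~$\KK[X]$.

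The two \emph{in particular} assertions are then immediate. Taking $V = \KK[X]$ yields the first. For the statement about generators: every element of $\KK[X]^{U^-}$ is a polynomial in any chosen system of generators $\lbrace f_j \rbrace$, and the derivation property of~$\partial$ recovers its value on such a polynomial from the values $\partial(f_j)$; so knowing $\partial$ on these generators is tantamount to knowing it on all of $\KK[X]^{U^-}$, and the first part finishes the job. The only step I anticipate requiring care is the opening commutation identity—passing from the normalization of $H$ by $B$ to the triviality of the $U$-action on $H$ by conjugation; once that is in place, the remainder is a straightforward combination of reductivity of~$G$ with the generation of simple $G$-modules by $U$-translates of lowest-weight vectors.
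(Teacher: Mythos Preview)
Your proof is correct and follows essentially the same approach as the paper: the key commutation of $\partial$ with $U$ (the paper phrases it Lie-algebraically as ``$\partial$ commutes with $\mathfrak u = \Lie U$''), combined with the fact that a simple $G$-module is spanned by $U$-translates of a lowest-weight vector, and finally the propagation to all of $\KK[X]$ via the derivation rule. Your derivation of the commutation identity from the triviality of characters of~$U$ is more explicit than the paper's one-line assertion, but the underlying argument is the same.
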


\begin{proof}
Clearly, $\partial$ is uniquely determined by its restriction to~$V$.
In turn, the action of $\partial$ on $V$ is uniquely determined by that on $V^{U^-}$ because $\partial$ commutes with $\mathfrak u = \Lie U$ and every simple $G$-module is the linear span of elements obtained from a lowest-weight vector via the action of~$\mathfrak u$.
\end{proof}

\subsection{Vertical and horizontal \texorpdfstring{$B$}{B}-root subgroups}

Since the group $U$ has no nontrivial characters, every $B$-root subgroup $H$ on $X$ commutes with~$U$.
In particular, the action of~$H$ on~$\KK[X]$ preserves the subalgebra $\KK[X]^U$ and hence induces an action of $H$ on $Z = \Spec \KK[X]^U$ such that the morphism $\pi_U$ in~(\ref{eqn_pi_U}) is $H$-equivariant.

\begin{definition}
A $B$-root subgroup $H$ on~$X$ is called \textit{vertical} if $H$ acts trivially on $\KK[X]^U$ and \textit{horizontal} otherwise.
\end{definition}

The above notions naturally extend to $B$-normalized LNDs on~$\KK[X]$.
Namely, such an LND is vertical if and only if it vanishes on $\KK[X]^U$ and horizontal otherwise.

Clearly, a $B$-root subgroup $H$ on~$X$ is vertical if and only if it acts trivially on~$Z$ and horizontal otherwise.
It follows that every vertical $B$-root subgroup on~$X$ preserves all fibers of~$\pi_U$ and every horizontal one permutes them, which explains the terminology.
Let $O_B$ be the open $B$-orbit in~$X$.
The following proposition provides a geometrical criterion for a $B$-root subgroup on~$X$ to be vertical or horizontal.

\begin{proposition} \label{prop_open_B-orbit}
Let $H$ be a $B$-root subgroup on~$X$.
\begin{enumerate}[label=\textup{(\alph*)},ref=\textup{\alph*}]
\item \label{prop_open_B-orbit_a}
$H$ is vertical if and only if it preserves $O_B$.
In this case, $H$ moves no $B$-stable prime divisors on~$X$.

\item \label{prop_open_B-orbit_b}
$H$ is horizontal if and only if it does not preserve~$O_B$.
In this case, $H$ moves precisely one $B$-stable prime divisor on~$X$.
\end{enumerate}
\end{proposition}

\begin{proof}
Thanks to Proposition~\ref{prop_open_orbit}, it suffices to prove part~(\ref{prop_open_B-orbit_a}).

Suppose that $H$ is vertical.
Since a generic fiber of $\pi_U$ is a single $U$-orbit, it follows that $H$ preserves generic $U$-orbits in~$X$.
Then generic $H$-orbits in~$X$ are contained in~$O_B$, hence $H$ preserves~$O_B$; see Case~\ref{case1} in~\S\,\ref{subsec_orbits}.

Conversely, suppose that $H$ preserves~$O_B$.
Then every $H$-orbit $Y \subseteq O_B$ fits into Case~\ref{case1} of~\S\,\ref{subsec_orbits} and hence coincides with an orbit of a $\GG_a$-subgroup in $B_Y$.
Such a $\GG_a$-subgroup is contained in $U$, therefore $Y$ is contained in a single $U$-orbit.
It follows that all functions in $\KK[X]^U$ are constant along generic $H$-orbits, hence $H$ acts trivially on $\KK[X]^U$ and $H$ is vertical.
\end{proof}

\begin{remark}
All root subgroups on affine toric varieties are horizontal.
\end{remark}

\begin{proposition} \label{prop_B-rs_equiv}
Let $H_1,H_2$ be two equivalent $B$-root subgroups on~$X$.
\begin{enumerate}[label=\textup{(\alph*)},ref=\textup{\alph*}]
\item \label{prop_B-rs_equiv_a}
$H_1,H_2$ are either both vertical or both horizontal.
\item \label{prop_B-rs_equiv_b}
If $H_1,H_2$ are horizontal then they move the same $B$-stable prime divisor on~$X$.
\end{enumerate}
\end{proposition}

\begin{proof}
If one of $H_1,H_2$ is vertical then so is the other one thanks to condition~(\ref{equiv2}) of Proposition~\ref{prop_equivalence}, which yields~(\ref{prop_B-rs_equiv_a}).
Now suppose $H_1$ is horizontal and moves a divisor $D \in \mathcal D^B$.
Let $W \subseteq X$ be a nonempty open subset such that $H_1x \cap W = H_2x \cap W$ for all $x \in W$.
Since $H_i$-orbits in~$X$ are closed for $i=1,2$ (see~\cite[\S\,1.3]{PV} or~\cite[Lemma~3.4]{Tim}), it follows that $H_1x = H_2x$ for all $x \in W$.
Then for every $x \in W \cap O_B$ the orbit $H_1x$ meets~$D$ (see Case~\ref{case2} of \S\,\ref{subsec_orbits}) and coincides with $H_2x$, therefore $D$ is $H_2$-unstable.
Applying Proposition~\ref{prop_open_B-orbit}(\ref{prop_open_B-orbit_b}) completes the proof of~(\ref{prop_B-rs_equiv_b}).
\end{proof}

\subsection{Simplest constructions of \texorpdfstring{$B$}{B}-root subgroups and/or \texorpdfstring{$B$}{B}-normalized LNDs}

\label{subsec_simplest_constructions}

In this subsection we discuss several basic constructions providing a number of nontrivial examples of $B$-root subgroups on affine spherical $G$-varieties or, equivalently, $B$-normalized LNDs on their algebras of regular functions.

\begin{construction}[Central subgroups] \label{constr_central_subgroups}
Suppose that the derived subgroup $(G,G)$ is nontrivial and acts on~$X$ with a finite kernel (for example, effectively).
Then the image in $\Aut(X)$ of every $T$-normalized central $\GG_a$-subgroup of $U$ is a vertical $B$-root subgroup on~$X$.
Such a central $\GG_a$-subgroup in~$U$ always exists; for simple $(G,G)$ it is unique and corresponds to the highest root of~$B$.
\end{construction}

\begin{construction}[Replicas] \label{constr_replicas}
Suppose $\partial$ is a $B$-normalized LND on~$\KK[X]$ of weight~$\chi$ and $\lambda \in \Gamma$ is such that $f_\lambda \in \Ker \partial$.
Then $f_\lambda\partial$ is a $B$-normalized LND on~$\KK[X]$ of weight $\chi + \lambda$.
Moreover, $\partial$ and $f_\lambda \partial$ are equivalent by Proposition~\ref{prop_equivalence}.
\end{construction}

\begin{construction}[Group extensions] \label{constr_G_1}
Suppose that the action of $G$ on $X$ extends to an action of a bigger connected reductive algebraic group $G_1 \supseteq G$ and the derived subgroup of~$G_1$ is nontrivial and acts on~$X$ with a finite kernel.
If $B_1 \supseteq T_1$ are a Borel subgroup and a maximal torus of $G_1$ satisfying $B_1 \supseteq B$, $T_1 \supseteq T$ and $U_1$ is the unipotent radical of~$B_1$ then the image in $\Aut(X)$ of every $T_1$-normalized central $\GG_a$-subgroup of $U_1$ is a $B$-root subgroup on~$X$.
\end{construction}

The following example shows that $B$-root subgroups arising in Construction~\ref{constr_G_1} may be not only vertical but also horizontal.

\begin{example}[Horizontal $B$-root subgroups arising from group extensions] \label{ex_hor_from_ge}
Take $X = \KK^6$ and let $G \subseteq \GL_6$ be the subgroup of all matrices of the form $\begin{pmatrix} sA & 0 \\ 0 & sA^\sharp \end{pmatrix}$ where $s \in \KK^\times$, $A \in \GL_3$, and $A^\sharp$ stands for the transpose of $A^{-1}$ with respect to the antidiagonal.
Then
\[
T = \lbrace \diag(st_1,st_2,st_3, st_3^{-1},st_2^{-1},st_1^{-1}) \mid s,t_1,t_2,t_3 \in\nobreak \KK^\times \rbrace
\]
and a basis of $\mathfrak X(T)$ is given by the characters $\chi,\chi_1,\chi_2,\chi_3$ where $\chi(t) = s^2$, $\chi_1(t) = st_1$, $\chi_2(t) = st_2$, $\chi_3(t) = st_3$ for all $t = \diag(st_1,st_2,st_3, st_3^{-1},st_2^{-1},st_1^{-1}) \in T$.
The algebra $\KK[X]^U$ is freely generated by the functions
\[
f_1 = x_3, \ \: f_2 = x_6, \ \: \text{and} \ \: f_3 = x_1x_6 + x_2x_5 + x_3x_4
\]
of weights $-\chi_3$, $\chi_1 - \chi$, and $-\chi$, respectively.
Now let $G_1 \subseteq \GL_6$ be the subgroup consisting of all matrices of the form $sA_1$ with $s \in \KK^\times$ and $A_1 \in \SO_6$ where $\SO_6$ preserves the quadratic form~$f_3$.
Put also $G_2 = \GL_6$ and note that $G \subseteq G_1 \subseteq G_2$.
For $i = 1,2$ we choose $B_i$ (resp. $U_i$, $T_i$) to be the subgroup of all upper-triangular (resp. upper unitriangular, diagonal) matrices in~$G_i$.
Let $H,H_1,H_2 \subseteq \GL_6$ be the one-parameter unipotent subgroups consisting of all matrices of the form
\[
\begin{pmatrix}
1 & 0 & x & 0 & 0 & 0\\
0 & 1 & 0 & 0 & 0 & 0\\
0 & 0 & 1 & 0 & 0 & 0\\
0 & 0 & 0 & 1 & 0 & -x\\
0 & 0 & 0 & 0 & 1 & 0\\
0 & 0 & 0 & 0 & 0 & 1
\end{pmatrix},
\quad
\begin{pmatrix}
1 & 0 & 0 & 0 & x & 0\\
0 & 1 & 0 & 0 & 0 & -x\\
0 & 0 & 1 & 0 & 0 & 0\\
0 & 0 & 0 & 1 & 0 & 0\\
0 & 0 & 0 & 0 & 1 & 0\\
0 & 0 & 0 & 0 & 0 & 1
\end{pmatrix},
\quad
\begin{pmatrix}
1 & 0 & 0 & 0 & 0 & x\\
0 & 1 & 0 & 0 & 0 & 0\\
0 & 0 & 1 & 0 & 0 & 0\\
0 & 0 & 0 & 1 & 0 & 0\\
0 & 0 & 0 & 0 & 1 & 0\\
0 & 0 & 0 & 0 & 0 & 1
\end{pmatrix},
\]
respectively.
Then $H,H_1,H_2$ are central subgroups in $U,U_1,U_2$, respectively, normalized by~$B$.
It is easy to check that all the functions $f_1,f_2,f_3$ are $H$-invariant and $H_1$-invariant and $f_3$ is not $H_2$-invariant, thus the $B$-root subgroups induced by $H$ and $H_1$ are vertical and that induced by $H_2$ is horizontal.
Moreover, a simple analysis of orbits in general position for $H$ and $H_1$ shows that $H$ and $H_1$ are not equivalent.
\end{example}

\begin{remark} \label{rem_X'}
Even for simple $G$ a vertical $B$-root subgroup on~$X$ need not be equivalent to a central $B$-root subgroup.
Indeed, in the situation of Example~\ref{ex_hor_from_ge}, taking $X' = \lbrace f_3 =\nobreak 0 \rbrace \subseteq\nobreak X$, $G' = (G,G) \simeq \SL_3$, and $B' = B \cap G'$, we would find that $X'$ is a spherical $G'$-variety and $H,H_1$ still induce nonequivalent vertical $B'$-root subgroups on~$X'$.
\end{remark}

\begin{construction}[Partial derivations] \label{constr_SM}
Let $V$ be a spherical $G$-module, choose a simple $G$-submodule $V_0 \subseteq V$ with highest weight~$\lambda$, and fix a $G$-module decomposition $V = V_0 \oplus W$.
Fix a lowest-weight vector $f \in V_0^*$ and regard it as a coordinate function on~$V$ via the chain $V_0^* \subseteq V_0^* \oplus W^* \simeq V^* \subseteq \KK[V]$. (The asterisk always denotes the dual $G$-module.)
Let $R \subseteq V^*$ be the $T$-invariant complement of the line $\KK f$.
Then the derivation $\partial/\partial f$ with respect to the decomposition $V^* = \KK f \oplus R$ is locally nilpotent and $B$-normalized of weight~$\lambda$.
\end{construction}

\begin{remark}
The above construction can yield both vertical and horizontal $B$-root subgroups.
For instance, in the situation of Example~\ref{ex_hor_from_ge}, $\partial/\partial x_1$ is horizontal (resp. horizontal, vertical) as a $B$-normalized (resp. $B_1$-normalized, $B_2$-normalized) LND on~$X$ regarded as a spherical $G$-module (resp. $G_1$-module, $G_2$-module).
\end{remark}

\subsection{\texorpdfstring{$B$}{B}-root subgroups and the open \texorpdfstring{$G$}{G}-orbit}

While a vertical $B$-root subgroup preserves the open $B$-orbit in~$X$ (see Proposition~\ref{prop_open_B-orbit}(\ref{prop_open_B-orbit_a})), it may not preserve the open $G$-orbit.

\begin{example}[A vertical $B$-root subgroup not preserving the open $G$-orbit]
Take $X =\nobreak \KK^2$, $G = \SL_2$ and consider the $B$-normalized LND $\partial/\partial x_1$ on~$X$ (see Construction~\ref{constr_SM}).
Then the corresponding $B$-root subgroup $H$ on~$X$ acts as $(s,(x_1,x_2)) \mapsto (x_1+s,x_2)$.
Since the points $(1,0)$ and $(0,0)$ lie in the same $H$-orbit, the open $G$-orbit in $X$ is not $H$-stable.
Note that the image of~$U$ in $\Aut(X)$, which acts as $(s, (x_1,x_2)) \mapsto (x_1+sx_2,x_2)$, is a replica of~$H$ (see Construction~\ref{constr_replicas}).
\end{example}

The above example is a manifestation of the following general result.

\begin{proposition}
Let $G$ be a connected semisimple algebraic group acting on a smooth affine variety $Y$ \textup(not necessarily spherical\textup) with an open orbit.
Assume that the $G$-action on $Y$ is not transitive.
Then there is a $B$-root subgroup $H$ in $\Aut(Y)$ that does not preserve the open $G$-orbit.
\end{proposition}

\begin{proof}
It is shown in the proof of \cite[Theorem~5.6]{AFKKZ} that there exists a finite-dimensional $G$-module $V$ such that the $G$-action on $Y$ extends to a transitive action of the bigger group $G_V:=G \rightthreetimes V$ where the multiplication is given by
\[
(g_1,v_1)\cdot(g_2,v_2)=(g_1g_2, g_2^{-1}v_1+v_2).
\]
Let $v$ be a $B$-eigenvector in $V$ and let $H$ denote the corresponding $B$-root subgroup acting on $Y$.
We claim that at least one subgroup $H$ arising this way does not preserve the open $G$-orbit in $Y$.
Assume the converse.
Note that $V$ is the linear span of $G$-orbits of $B$-eigenvectors in $V$.
This implies that the subgroups conjugate to subgroups coming from $B$-eigenvectors by elements of $G$ generate the unipotent radical $V$ of~$G_V$.
So the group $G_V$ preserves the open $G$-orbit in $Y$, a contradiction.
\end{proof}

\subsection{Multiple \texorpdfstring{$B$}{B}-root subgroups of the same weight}

The following example shows that, unlike the toric case, a $B$-normalized LND $\partial$ on $\KK[X]$ is not uniquely determined by its weight (up to proportionality).

\begin{example}[$B$-root subgroups of the same weight] \label{ex_same_weights}
Take $X = \KK^3$ and let $G \subseteq \GL_3$ be the subgroup of all matrices of the form $\begin{pmatrix} sA & 0 \\ 0 & s^2 \end{pmatrix}$ where $s \in \KK^\times$ and $A \in \SL_2$.
It is easy to check (e.g., using Remark~\ref{rem_AHV_construction}) that $X$ is horospherical.
We have
\[
T = \lbrace \diag(ss_1,ss_1^{-1},s^2) \mid s,s_1 \in\nobreak \KK^\times \rbrace
\]
and let $\chi_1,\chi_2 \in \mathfrak X(T)$ be the characters defined by $\chi_1(t) = ss_1$, $\chi_2(t) = ss_1^{-1}$ for all $t = \diag(ss_1,ss_1^{-1},s^2) \in T$.
The algebra $\KK[X]^U$ is freely generated by the coordinate functions $x_2, x_3$ of weights $-\chi_2$, $-\chi_1-\chi_2$, respectively.
Consider the LNDs $\partial_1 = \partial/\partial x_1$ and $\partial_2 = x_2\partial/\partial x_3$ on $\KK[V]$.
Both $\partial_1$ and $\partial_2$ commute with $x_2\partial/\partial x_1$, which corresponds to~$U$, and hence are $B$-normalized.
It is easy to see that $\partial_1$ and $\partial_2$ are not proportional and have the same weight~$\chi_1$.
Note also that $\partial_1$ is vertical and $\partial_2$ is horizontal.
\end{example}

\begin{remark} \label{rem_same_weights}
In the above example any nontrivial linear combination $\partial = c_1\partial_1 + c_2\partial_2$ with $c_1,c_2 \in \KK$ is again a $B$-normalized LND of weight~$\chi_1$ (it is locally nilpotent because every $T$-semi\-invariant function in $\KK[X]$ has weight $k_1\chi_1 + k_2\chi_2$ with $k_1,k_2 \le 0$); hence we get a two-dimensional space of $B$-normalized LNDs on~$\KK[X]$ of the same weight.
Note that $\partial$ is horizontal whenever $c_2 \ne 0$.
\end{remark}

The next example shows that even a vertical $B$-root subgroup may be not uniquely determined by its weight.

\begin{example}[Vertical $B$-root subgroups of the same weight]
Take $X = \KK^4$ and let $G \subseteq \GL_4$ be the subgroup of all matrices of the form $\begin{pmatrix} A_1 & 0 \\ 0 & A_2 \end{pmatrix}$ where $A_1,A_2 \in \SL_2$.
Define characters $\chi_1,\chi_2 \in \mathfrak X(T)$ by $\chi_1(t) = s_1$, $\chi_2(t) = s_2$ for all $t = \diag(s_1,s_1^{-1},s_2,s_2^{-1}) \in T$.
The algebra $\KK[X]^U$ is freely generated by the coordinate functions $x_2, x_4$ of weights $\chi_1$, $\chi_2$, respectively.
Consider the LNDs $\partial_1 = x_4 \partial/\partial x_1$ and $\partial_2 = x_2\partial/\partial x_3$ on $\KK[X]$.
Both $\partial_1$ and~$\partial_2$ commute with $x_2\partial/\partial x_1$ and $x_4\partial/\partial x_3$, which correspond to the root subgroups in~$U$, and hence are $B$-normalized.
Moreover, $\partial_1$ and $\partial_2$ vanish on $\KK[X]^U$ and hence are vertical.
It is easy to see that $\partial_1$ and $\partial_2$ are not proportional and have the same weight~$\chi_1+\chi_2$.
We also note that any nontrivial linear combination of $\partial_1$ and $\partial_2$ is again a vertical $B$-normalized LND on~$\KK[X]$.
\end{example}

\begin{remark}
Another example of two vertical $B$-root subgroups of the same weight can be obtained in the situation of Remark~\ref{rem_X'} by taking appropriate replicas of $H$ and~$H_1$.
\end{remark}

\subsection{Weights of \texorpdfstring{$B$}{B}-root subgroups}

In this subsection we present straightforward necessary conditions for the set of weights of vertical $B$-root subgroups and that of horizontal ones.

\begin{proposition} \label{prop_weights_of_VRS}
Suppose that $\chi$ is the weight of a vertical $B$-root subgroup on~$X$.
Then each element of the set $\chi + \Gamma$ is again the weight of a vertical $B$-root subgroup on~$X$.
\end{proposition}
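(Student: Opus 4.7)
The plan is to realize the new $B$-root subgroup as a replica of the original one, in the sense of Construction~\ref{constr_replicas}. Let $H$ be a vertical $B$-root subgroup on~$X$ of weight~$\chi$ and let $\partial$ be the corresponding $B$-normalized LND on~$\KK[X]$, which is homogeneous of weight~$\chi$ with respect to the $T$-grading.

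The first observation is that verticality of~$\partial$ means that $\partial$ annihilates the subalgebra $\KK[X]^U$. By the decomposition~(\ref{eqn_K[X]^U}), this subalgebra is spanned by the highest-weight vectors $f_\mu$ for $\mu \in \Gamma$. In particular, $f_\lambda \in \Ker \partial$, so the hypothesis of Construction~\ref{constr_replicas} is satisfied. Applying that construction, $f_\lambda \partial$ is a $B$-normalized LND on~$\KK[X]$ of weight $\chi + \lambda$, hence gives rise to a $B$-root subgroup on~$X$ of weight~$\chi + \lambda$.

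It then remains to verify that this new $B$-root subgroup is again vertical, i.\,e. that $f_\lambda \partial$ kills $\KK[X]^U$. This is immediate from the computation $(f_\lambda \partial)(f_\mu) = f_\lambda \cdot \partial(f_\mu) = 0$ for every $\mu \in \Gamma$, using again that $\partial$ annihilates each $f_\mu$. Since the $f_\mu$ span $\KK[X]^U$, we conclude that $f_\lambda \partial$ is vertical, completing the proof.

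There is no real obstacle here; the only subtlety is noticing that verticality of $\partial$ automatically places every $f_\lambda$ (and not merely a single one) in $\Ker \partial$, so that the replica construction can be applied freely and the verticality property is preserved under this operation.
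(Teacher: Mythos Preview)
Your proof is correct and follows essentially the same approach as the paper: use that verticality gives $\KK[X]^U \subset \Ker \partial$, in particular $f_\lambda \in \Ker \partial$, and then apply Construction~\ref{constr_replicas}. The only difference is cosmetic: you verify verticality of $f_\lambda\partial$ by directly computing $(f_\lambda\partial)(f_\mu)=0$, whereas the paper leaves this implicit via the equivalence $\Ker(f_\lambda\partial)=\Ker\partial$ noted in Construction~\ref{constr_replicas}.
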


\begin{proof}
Let $\partial$ be a vertical $B$-normalized LND on~$\KK[X]$ of weight~$\chi$.
By definition, $\KK[X]^U \subseteq \Ker \partial$, hence it remains to apply Construction~\ref{constr_replicas}.
\end{proof}

\begin{remark}
When $\chi$ is the weight of a $T$-normalized central subgroup in~$U$ (see Construction~\ref{constr_central_subgroups}), the assertion of Proposition~\ref{prop_weights_of_VRS} was observed in~\cite[Proposition~8.1]{RvS}.
\end{remark}

\begin{proposition} \label{prop_weight_of_HRS}
Suppose that $H$ is a horizontal $B$-root subgroup on~$X$.
Then $\chi_H \in \mathfrak R(\mathcal E) \cap \Lambda^+$.
In particular, $\chi_H \in M$.
\end{proposition}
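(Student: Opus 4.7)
The strategy is to descend $\partial$ to the associated affine toric $T$-variety $Z = \Spec \KK[X]^U$ and apply the classification of homogeneous LNDs on affine toric varieties recalled in~\S\,\ref{subsec_RS_on_ATV}.

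First I would verify that $\partial$ restricts to a well-defined derivation of $\KK[X]^U$. Since $U$ is unipotent and has no nontrivial characters, while $B$ (and in particular $U$) normalizes the one-parameter subgroup $H \simeq \GG_a$, the conjugation action of $U$ on $H$ must be trivial. Hence $H$ centralizes $U$, so $H$ preserves the subalgebra $\KK[X]^U$, and so does $\partial$. Write $\partial_Z := \partial|_{\KK[X]^U}$ for the resulting derivation of $\KK[Z]$. Three properties of $\partial_Z$ are then immediate: it is locally nilpotent (a restriction of an LND to a stable subalgebra is an LND); it is $T$-homogeneous of weight $\chi(H)$ (the $T$-grading on $\KK[Z]$ is inherited from that on $\KK[X]$); and it is nonzero, which is exactly the horizontality assumption on~$H$.

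Next, by construction $Z$ is an affine toric $T$-variety whose weight monoid is $\Gamma$ and whose weight cone $\mathcal{G}$ has full dimension in $M_\QQ = (\ZZ\Gamma)\otimes_\ZZ \QQ$. Invoking \cite[Theorem~2.7]{L1} as recalled in~\S\,\ref{subsec_RS_on_ATV}, every nonzero $T$-homogeneous LND on $\KK[Z]$ has weight equal to a Demazure root of the cone~$\mathcal{G}$ (in the sense of~\S\,\ref{subsec_Dem_roots}, i.\,e.\ an element of $\mathfrak{R}(\mathcal{G}^\vee) = \mathfrak{R}(\mathcal{E})$). Therefore $\chi(H) \in \mathfrak{R}(\mathcal{G})$. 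Combining this with Proposition~\ref{prop_weight_is_dominant}, which gives $\chi(H) \in \Lambda^+$, one obtains the desired inclusion $\chi(H) \in \mathfrak{R}(\mathcal{G}) \cap \Lambda^+$.

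I do not expect a genuine obstacle in this approach: the entire content is encapsulated in the observation that $H$ centralizes $U$ and therefore induces a $T$-root subgroup on the toric quotient $Z$, after which the conclusion is a direct application of the toric classification and the already established fact that weights of $B$-root subgroups are dominant. The only point requiring a moment's care is the bookkeeping for the notational convention, namely that $\mathfrak{R}(\mathcal{G})$ in the statement is to be read as the Demazure roots associated with the dual cone $\mathcal{E} = \mathcal{G}^\vee$ in the definition of~\S\,\ref{subsec_Dem_roots}.
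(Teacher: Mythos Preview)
Your proposal is correct and follows essentially the same approach as the paper: descend $H$ (equivalently $\partial$) to a nonzero $T$-homogeneous LND on $\KK[Z] = \KK[X]^U$ using that $H$ commutes with~$U$, then invoke the toric classification together with Proposition~\ref{prop_weight_is_dominant}. The paper's proof is terser only because the fact that $H$ commutes with $U$ and therefore acts on $Z$ is already established earlier in the section.
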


\begin{proof}
We have $\chi_H \in \Lambda^+$ by Proposition~\ref{prop_weight_is_dom}(\ref{prop_weight_is_dom_a}).
As $H$ is horizontal, its action induces a $T$-root subgroup on~$Z$, hence $\chi_H \in \mathfrak R(\mathcal E)$.
\end{proof}

The following example shows that the set of weights of horizontal $B$-root subgroups on~$X$ may be a proper subset of $\mathfrak R(\mathcal E) \cap \Lambda^+$.

\begin{example}[``Missing'' $B$-root subgroups] \label{ex_missing_root_subgroups}
Take $X = \KK^3$ and let $G \subseteq \GL_3$ be the subgroup of all matrices of the form $sA$ where $s \in \KK^\times$ and $A \in \SO_3$ with $\SO_3$ preserving the quadratic form $f = x_2^2 + 2x_1x_3$ on~$X$.
Then $\Lambda^+ = \ZZ_{\ge0}\alpha \oplus\nobreak \ZZ\chi$ where the characters $\chi,\alpha \in \mathfrak X(T)$ are defined by $\chi(t) = s$, $\alpha(t) = s_1$ for all $t = \diag(ss_1,s,ss_1^{-1}) \in T$; note that $\alpha$ is the unique root in~$\Pi$.
The algebra $\KK[X]^U$ is freely generated by the two functions $x_3$ and~$f$ of weights $\alpha - \chi$ and $-2\chi$, respectively, and so $\Gamma = \ZZ_{\ge0}\lbrace \alpha -\nobreak \chi, -2\chi \rbrace$.
It is not hard to see that $\mathfrak R(\mathcal E) \cap \Lambda^+ = 2\chi + \ZZ_{\ge0}(\alpha-\chi)$.
We now show that there is no nonzero $B$-normalized LND on $\KK[X]$ of weight~$2\chi$.
Indeed, by Proposition~\ref{prop_U^-} such an LND is uniquely determined by the image of~$x_1$.
Since $x_1$ is of weight $-\alpha - \chi$, its image should be a nonzero $T$-semiinvariant function of weight~$-\alpha + \chi$.
However there are no such functions because every $T$-semiinvariant function in $\KK[X]$ has weight of the form $k\alpha + l\chi$ with $l \le 0$.
Note also that for every $k \ge 0$ the formula $x_3^k\partial / \partial x_1$ defines a horizontal $B$-normalized LND of weight $\alpha + \chi + k(\alpha - \chi)$ on~$X$, hence the set of weights of horizontal $B$-root subgroups on~$X$ equals $(\mathfrak R(\mathcal E) \cap \Lambda^+) \setminus \lbrace 2\chi \rbrace$.
\end{example}

\subsection{\texorpdfstring{$B$}{B}-stable prime divisors moved by \texorpdfstring{$B$}{B}-root subgroups}

In this subsection we discuss various conditions under which a given $B$-stable prime divisor in~$X$ is moved or not moved by a $B$-root subgroup.

\begin{proposition} \label{prop_D_is_moved}
Suppose $H$ is a horizontal $B$-root subgroup on~$X$ and $\rho \in \mathcal E^1$ is such that $\chi_H \in \mathfrak R_\rho(\mathcal E)$.
Given $D \in \mathcal D^B$, the following conditions are equivalent.
\begin{enumerate}[label=\textup{(\arabic*)},ref=\textup{\arabic*}]
\item \label{prop_D_is_moved_1}
$\varkappa(D)$ is a positive multiple of~$\rho$.

\item \label{prop_D_is_moved_2}
$D$ is moved by~$H$.
\end{enumerate}
In particular, there is exactly one $D \in \mathcal D^B$ such that $\varkappa(D)$ is a positive multiple of~$\rho$.
\end{proposition}

\begin{proof}
(\ref{prop_D_is_moved_1})$\Rightarrow$(\ref{prop_D_is_moved_2})
Let $I \subseteq \KK[X]$ be the ideal of~$D$.
Then for every $\lambda \in \Gamma$ with $\langle \rho, \lambda \rangle > 0$ one has $f_\lambda \in I$ but $\partial^{\langle \rho, \lambda \rangle}f_\lambda \notin I$ where $\partial$ is the LND corresponding to~$H$.
Consequently, $I$ is $\partial$-unstable and hence $D$ is $H$-unstable.

(\ref{prop_D_is_moved_2})$\Rightarrow$(\ref{prop_D_is_moved_1})
In view of~(\ref{eqn_E(X)}) there is $D' \in \mathcal D^B$ such that $\varkappa(D')$ is a positive multiple of~$\rho$.
Then $D'$ is moved by~$H$ by the above argument.
Now Proposition~\ref{prop_open_B-orbit}(\ref{prop_open_B-orbit_b}) yields $D' = D$.

The last claim is also implied by Proposition~\ref{prop_open_B-orbit}(\ref{prop_open_B-orbit_b}).
\end{proof}

Combining Proposition~\ref{prop_D_is_moved} with~(\ref{eqn_Gamma(O)}) we obtain

\begin{corollary} \label{crl_hor_Gamma(O)}
Let $H$ be a horizontal $B$-root subgroup on~$X$ and let $D \in \mathcal D^B$ be moved by~$H$.
If $\chi_H \in \Gamma(O)$ then $D \in \mathcal D^G$.
\end{corollary}

The next proposition shows that some colors of~$X$ cannot be moved by $B$-root subgroups.

\begin{proposition} \label{prop_frozen_colors}
Suppose that $D \in \mathcal D^B$ satisfies one of the following conditions:
\begin{enumerate}[label=\textup{(\alph*)},ref=\textup{\alph*}]
\item \label{prop_frozen_colors_a}
$\varkappa(D)$ is not proportional to any $\rho \in \mathcal E^1$;
\item \label{prop_frozen_colors_b}
$\varkappa(D) = c\varkappa(D')$ for some $D' \in \mathcal D^B \setminus \lbrace D \rbrace$ and $c > 0$;
\item \label{prop_frozen_colors_c}
$\langle \varkappa(D), \lambda \rangle \ge 0 $ for all $\lambda \in M \cap \Lambda^+$.
\end{enumerate}
Then there is no $B$-root subgroup on~$X$ that moves~$D$.
Moreover, $D \in \mathcal D$, i.e. $D$ is necessarily a color of~$X$.
\end{proposition}

\begin{proof}
(\ref{prop_frozen_colors_a},\,\ref{prop_frozen_colors_b})
In both cases, the first claim follows directly from Proposition~\ref{prop_D_is_moved} and the second one is implied by Proposition~\ref{prop_rho_G-stable}.

(\ref{prop_frozen_colors_c})
In view of part~(\ref{prop_frozen_colors_a}) it remains to consider the case where $\varkappa (D)$ is a positive multiple of some~$\rho \in \mathcal E^1$.
Assume that a $B$-root subgroup $H$ on $X$ moves~$D$.
Then $\chi_H \in \mathfrak R_\rho(\mathcal E)$ by Proposition~\ref{prop_D_is_moved}, and so $\langle \varkappa(D),\chi_H \rangle < 0$.
On the other hand, $\chi_H \in M \cap \Lambda^+$ by Remark~\ref{rem_DR_are_in_M} and Proposition~\ref{prop_weight_is_dom}(\ref{prop_weight_is_dom_a}), therefore $\langle \varkappa(D), \chi_H \rangle \ge 0$, a contradiction.
Next, since $\Gamma(O) \subseteq M \cap \Lambda^+$, it follows from~(\ref{eqn_Gamma(O)}) that $\varkappa(D) \in \QQ_{\ge0}\lbrace \varkappa(D') \mid D' \in \mathcal D \rbrace$.
Thanks to Proposition~\ref{prop_rho_G-stable}, the latter is possible only if $D \in \mathcal D$.
\end{proof}

Recall colors of type~$a$ on~$X$; see Definition~\ref{def_type_a}.

\begin{corollary} \label{crl_frozen_colors}
If a color $D \in \mathcal D$ is not of type~$a$ then there is no $B$-root subgroup on~$X$ that moves~$D$.
\end{corollary}

\begin{proof}
This follows from Propositions~\ref{prop_types_of_colors}(\ref{prop_types_of_colors_c}) and~\ref{prop_frozen_colors}(\ref{prop_frozen_colors_c}).
\end{proof}

The next corollary indicates a large class of affine spherical $G$-varieties containing no colors that can be moved by a $B$-root subgroup.
By Proposition~\ref{prop_Gamma(O)} and Remarks~\ref{rem_colors_of_type_a},\,\ref{rem_Sigma_is_empty}, this class includes all affine spherical $G$-varieties having no colors of type~$a$ and in particular all affine horospherical $G$-varieties.
Moreover, $X$ belongs to this class when $O$ is a symmetric space; see~\cite[Definition~26.1 and Proposition~26.24]{Tim}.

\begin{corollary} \label{crl_no_moved_colors}
Suppose that $\Gamma(O) = M \cap \Lambda^+$.
Then no color in~$X$ can be moved by a $B$-root subgroup.
\end{corollary}

\begin{proof}
The claim is implied by Proposition~\ref{prop_frozen_colors}(\ref{prop_frozen_colors_c}) along with~(\ref{eqn_Gamma(O)}).
\end{proof}

\begin{remark}
It follows from~(\ref{eqn_Gamma(O)}) that $\mathcal D \ne \varnothing$ whenever $M \cap \Lambda^+ \ne M$.
\end{remark}

In view of Corollary~\ref{crl_no_moved_colors} it is natural to ask whether it happens at all that a color on an affine spherical $G$-variety is moved by a $B$-root subgroup.
The next example shows that such situations do occur.

\begin{example}[Colors moved by $B$-root subgroups] \label{ex_moved_colors}
Let $G_1 = G_2 = \GL_2$, $X = \mathrm{Mat}_{2\times 2}(\KK)$ and consider the action of $G_1 \times G_2$ on $X$ given by $((g_1,g_2),x) \mapsto g_1xg_2^{-1}$.
Restrict this action to the subgroup $G = F_1 \times F_2$ where $F_1 = \SL_2 \subseteq G_1$ and $F_2 \simeq (\KK^\times)^2$ is the diagonal torus in~$G_2$.
Choose the Borel subgroup $B = B_1 \times F_2 \subseteq G$ where $B_1$ consists of all upper-triangular matrices in~$F_1$.
Then $X$ is an affine spherical $G$-variety and $\mathcal D = \lbrace D_1,D_2 \rbrace$ with $D_1 = \lbrace x_{21} = 0 \rbrace$, $D_2 = \lbrace x_{22} = 0 \rbrace$ where $x_{ij}$ stands for the $(ij)$th matrix element of~$X$.
Note also that $\mathcal D^B = \lbrace D_1, D_2, D_3 \rbrace$ with $D_3 = \lbrace \det = 0 \rbrace$.
Now the action of the subgroup of all lower (resp. upper) unitriangular matrices in~$G_2$ induces a $B$-root subgroup $H_1$ (resp.~$H_2$) on~$X$ that moves~$D_1$ (resp.~$D_2$) and preserves the open $G$-orbit $O = \GL_2 \subseteq X$.
Of course, both colors $D_1,D_2$ are of type~$a$; moreover, $\mathcal D = \mathcal D_\alpha$ where $\alpha$ is the unique simple root of~$G$.
Observe that both $H_1,H_2$ are normalized by the whole~$G$, so in fact they are $G$-root subgroups on~$X$.
Besides, it is worth mentioning that the $B$-normalized LNDs $\partial/\partial x_{11}$ and $\partial/\partial x_{12}$ (see Construction~\ref{constr_SM}) do not vanish on~$\det$, therefore the respective $B$-root subgroups $H_3,H_4$ on~$X$ move~$D_3$.

Let $T_1$ be the subgroup of diagonal matrices in $F_1$, so that $T = T_1 \times F_2$ is a maximal torus in~$G$.
Let $\omega = \alpha/2 \in \mathfrak X(T_1)$ be the fundamental weight of~$F_1$, so that $\omega(t_1) = s$ for all $t_1 = \diag(s,s^{-1}) \in T_1$.
Choose also the basis $\chi_1,\chi_2 \in \mathfrak X(F_2)$ such that $\chi_i(t_2) = s_i$ for $i=1,2$ and all $t_2 = \diag(s_1,s_2) \in F_2$.
The algebra $\KK[X]^U$ is freely generated by the functions $x_{21},x_{22},\det$ of weights $\omega+\chi_1$, $\omega+\chi_2$, $\chi_1+\chi_2$, respectively, therefore
\[
\Gamma = \ZZ_{\ge0} \lbrace \omega + \chi_1, \omega + \chi_2, \chi_1 + \chi_2 \rbrace.
\]
We note also that $\Gamma(O) = \ZZ_{\ge0} \lbrace \omega + \chi_1, \omega + \chi_2 \rbrace \oplus \ZZ(\chi_1+\chi_2)$.
The weights of $H_1,H_2,H_3,H_4$ are $\chi_2 - \chi_1$, $\chi_1 - \chi_2$, $\omega - \chi_1$, $\omega - \chi_2$, respectively.
\end{example}

Despite the above-discussed negative results on moving colors by $B$-root subgroups, we propose the following

\begin{conjecture} \label{conj_G-stable}
For every $D \in \mathcal D^G$ there is a $B$-root subgroup on~$X$ that moves~$D$.
\end{conjecture}

In Theorem~\ref{thm_moved_divisors} below we prove this conjecture when $X$ is horospherical.

\subsection{The case of \texorpdfstring{$G$}{G}-saturated \texorpdfstring{$\Gamma$}{Gamma}}

The monoid $\Gamma$ is said to be \textit{$G$-saturated} if $\Gamma = \ZZ\Gamma \cap \Lambda^+$.
In other words, $\Gamma$ equals the intersection of $\Lambda^+$ with a sublattice of~$\mathfrak X(T)$.

\begin{remark} \label{rem_G-saturated}
$\Gamma$ is $G$-saturated if and only if $\mathcal E = \QQ_{\ge0} \lbrace \iota(\alpha^\vee) \mid \alpha \in \Pi \rbrace$.
\end{remark}

\begin{proposition} \label{prop_G-saturated}
Suppose that $\Gamma$ is $G$-saturated.
Then every $B$-root subgroup on~$X$ is vertical.
\end{proposition}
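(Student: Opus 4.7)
The plan is to argue by contradiction: assume there exists a horizontal $B$-root subgroup $H$ on $X$ and show that this forces $\Gamma$ to fail $G$-saturatedness.

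First, I would invoke Proposition~\ref{prop_weight_of_HRS} to obtain that $\chi(H)$ is a Demazure root of the cone $\mathcal{E}$ lying in $\Lambda^+$. By the very definition of the Demazure root set $\mathfrak{R}(\mathcal{E})$ from~\S\,\ref{subsec_Dem_roots}, this means $\chi(H) \in M = \ZZ\Gamma$, and moreover there exists $\rho \in \mathcal{E}^1$ such that $\langle \rho, \chi(H)\rangle = -1$. Consequently $\chi(H)$ strictly fails to lie in the cone $\mathcal{G}$, so in particular $\chi(H)\notin \Gamma$.

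Next, I would combine these two facts with the assumption: from $\chi(H) \in M \cap \Lambda^+ = \ZZ\Gamma \cap \Lambda^+$ and the hypothesis that $\Gamma$ is $G$-saturated (i.e.\ $\Gamma = \ZZ\Gamma \cap \Lambda^+$) we get $\chi(H)\in\Gamma$, contradicting the previous paragraph. Hence no horizontal $B$-root subgroup exists, so by the dichotomy established before Proposition~\ref{prop_open_B-orbit} every $B$-root subgroup on $X$ is vertical.

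There is no real obstacle here: the whole argument amounts to observing that the horizontal case forces a weight lying in $\Lambda^+\cap\ZZ\Gamma$ but outside the cone $\mathcal{G}\supset\Gamma$, which is precisely the failure of $G$-saturatedness. The only point to verify carefully is the intermediate claim $\chi(H)\in M$, but this is automatic: horizontality means $H$ induces a nontrivial $T$-root subgroup on the toric variety $Z$ with weight monoid~$\Gamma$, and its weight (which coincides with $\chi(H)$) lives in the ambient lattice $M$ of that toric data by the discussion in~\S\,\ref{subsec_RS_on_ATV}.
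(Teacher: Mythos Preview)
Your proposal is correct and follows essentially the same route as the paper. The paper's one-line argument simply observes that $\mathfrak R(\mathcal E)\cap\mathcal G=\varnothing$, so under $G$-saturatedness one has $\mathfrak R(\mathcal E)\cap\Lambda^+=\varnothing$, and then applies Proposition~\ref{prop_weight_of_HRS}; your contradiction argument unpacks exactly this implication.
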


\begin{proof}
Recall from Remark~\ref{rem_DR_are_in_M} that $\mathfrak R(\mathcal E) \subseteq \ZZ\Gamma$.
Then
\[
\mathfrak R(\mathcal E) \cap \Lambda^+ = \mathfrak R(\mathcal E) \cap \ZZ\Gamma \cap \Lambda^+ = \mathfrak R(\mathcal E) \cap \Gamma = \varnothing,
\]
where the latter equality holds because $\mathfrak R(\mathcal E) \cap \mathcal G = \varnothing$.
Now the claim follows from Proposition~\ref{prop_weight_of_HRS}.
\end{proof}

\begin{remark}
If $\Gamma$ is $G$-saturated then, combining Remark~\ref{rem_G-saturated} with Propositions~\ref{prop_rho_G-stable} and~\ref{prop_frozen_colors}(\ref{prop_frozen_colors_c}), we find that $\mathcal D^G = \varnothing$.
So Proposition~\ref{prop_G-saturated} does not contradict Conjecture~\ref{conj_G-stable}.
\end{remark}

\begin{corollary} \label{crl_rk=1}
Suppose that $\rk \Gamma =1$ and $(G,G)$ is nontrivial and acts on~$X$ with a finite kernel.
Then every $B$-root subgroup on $X$ is vertical.
\end{corollary}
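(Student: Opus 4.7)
The plan is to deduce this corollary directly from Proposition~\ref{prop_G-saturated} by verifying that the weight monoid $\Gamma$ is $G$-saturated under the given hypotheses.

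First, I would use the normality of $X$, which tells us that $\Gamma = \QQ_{\ge0}\Gamma \cap \ZZ\Gamma$. Combined with $\rk\Gamma = 1$, this immediately gives $\Gamma = \ZZ_{\ge0}\lambda_0$ for a uniquely determined primitive element $\lambda_0 \in \Lambda^+$. The task then reduces to showing $\ZZ\Gamma \cap \Lambda^+ = \Gamma$, i.e., ruling out $-\lambda_0 \in \Lambda^+$.

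The central step is an argument by contradiction: assume $-\lambda_0 \in \Lambda^+$. Then $\lambda_0$ is simultaneously dominant and antidominant, which forces $\langle\alpha^\vee,\lambda_0\rangle = 0$ for every coroot $\alpha^\vee$. Equivalently, the character $\lambda_0$ of $T$ is trivial on the maximal torus $T \cap G'$ of $G'$. Consequently, for every $k \ge 0$, the simple $G$-module $\KK[X]_{k\lambda_0}$ of highest weight $k\lambda_0$ is one-dimensional (being of Weyl-invariant highest weight) and restricts to the trivial $G'$-module. Summing these summands via the decomposition~\eqref{eqn_decomp}, the group $G'$ acts trivially on $\KK[X]$, hence trivially on~$X$. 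But then the kernel of $G' \to \Aut(X)$ equals $G'$, which by hypothesis is finite; since $G'$ is connected, this forces $G' = \lbrace e\rbrace$, contradicting the hypothesis that $G'$ acts on~$X$ with a (nontrivial) finite-kernel action.

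Once $\Gamma = \ZZ\Gamma \cap \Lambda^+$ is established, Proposition~\ref{prop_G-saturated} finishes the argument. The most delicate point in this plan is the implication ``$\pm\lambda_0$ both dominant $\Rightarrow$ $G'$ acts trivially on~$X$''; this relies on the characterization of one-dimensional simple $G$-modules as those of Weyl-invariant highest weight together with the isotypic decomposition of $\KK[X]$ as a $G$-module, which in turn depends on the spherical nature of $X$ via Theorem~\ref{thm_VK}.
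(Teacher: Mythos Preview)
Your proof is correct and follows exactly the paper's route: write $\Gamma = \ZZ_{\ge 0}\lambda_0$, show $-\lambda_0 \notin \Lambda^+$ so that $\Gamma = \ZZ\Gamma \cap \Lambda^+$ is $G$-saturated, and then invoke Proposition~\ref{prop_G-saturated}. The paper compresses your contradiction argument into the single clause ``$\lambda$ restricts nontrivially to~$G'$, hence $-\lambda \notin \Lambda^+$''; your unpacking via Weyl-invariance and one-dimensional simple modules is precisely the content behind that clause.
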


\begin{proof}
The hypotheses imply that $\Gamma = \ZZ_{\ge 0} \lambda$ for a dominant weight $\lambda \in \Lambda^+$ that restricts nontrivially to~$(G,G)$.
Then $-\lambda \notin \Lambda^+$ and hence $\Gamma = \ZZ\Gamma \cap \Lambda^+$, which yields the claim thanks to Proposition~\ref{prop_G-saturated}.
\end{proof}

\begin{corollary}
Suppose that $G=\SL_2$ and $G$ acts nontrivially on~$X$.
Then every $B$-root subgroup on $X$ is equivalent to~$U$.
\end{corollary}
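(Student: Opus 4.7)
The plan is to first invoke Corollary~\ref{crl_rk=1} to force every $B$-root subgroup on~$X$ to be vertical, and then to exploit the one-dimensionality of~$U$ in the $\SL_2$-setting to identify the orbits of any such subgroup with those of~$U$.

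I would begin by verifying the hypotheses of Corollary~\ref{crl_rk=1}. Since $G=\SL_2=G'$, the kernel of the $G$-action on~$X$ is a proper normal subgroup of~$\SL_2$, hence contained in the centre and in particular finite. The weight monoid $\Gamma$ lies in $\Lambda^+=\ZZ_{\ge 0}\omega$ (with $\omega$ the fundamental weight), so $\rk\Gamma\le 1$; moreover $\Gamma\ne\{0\}$, for otherwise the decomposition~\eqref{eqn_decomp} would give $\KK[X]=\KK[X]^G$ and trivialize the action. Hence $\rk\Gamma=1$, and Corollary~\ref{crl_rk=1} applies: every $B$-root subgroup on~$X$ is vertical. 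At the same time, because $U$ has dimension one while the kernel of $G\to\Aut(X)$ is finite, $U$ itself acts nontrivially on~$X$; its image in $\Aut(X)$ is a nontrivial $\GG_a$-subgroup, normalized by~$B$ since $U$ is normal in~$B$, and hence is itself a $B$-root subgroup which I continue to denote by~$U$.

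Next, for a given $B$-root subgroup $H$ on~$X$, Proposition~\ref{prop_open_B-orbit}\,(\ref{prop_open_B-orbit_a}) tells me that $H$ preserves the open $B$-orbit~$\OOO$. Tracing through the proof of that proposition, every one-dimensional $H$-orbit $Y\subset\OOO$ falls into Case~\ref{case1} of \S\ref{subsec_normalized}: $Y$ coincides with the orbit of some $\GG_a$-subgroup of the stabilizer~$B_Y\subseteq B$. Here I would invoke the crucial rank-one observation that, because $B=T\ltimes U$ with $\dim U=1$ and $T$ has no nontrivial unipotent subgroups, the only $\GG_a$-subgroup of~$B$ is $U$ itself. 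Consequently, every such orbit $Y$ is in fact a $U$-orbit, giving the inclusion $H\cdot x\subseteq U\cdot x$ for each $x$ in the complement of the (closed) $H$-fixed locus within~$\OOO$.

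To conclude, I would let $W\subseteq\OOO$ be the open subset where both $H$ and $U$ act with one-dimensional orbits; this is nonempty by nontriviality of the two actions. For every $x\in W$ the inclusion $H\cdot x\subseteq U\cdot x$ just established is forced to be an equality by dimension, which is precisely the criterion of \S\ref{subsec_equivalence} for $H$ to be equivalent to~$U$. The main obstacle is the identification step in the previous paragraph: the whole argument depends crucially on $\dim U=1$, which is special to semisimple rank one and which would fail in the general setting, where $U$ contains many one-parameter unipotent subgroups.
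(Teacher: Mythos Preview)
Your proof is correct and follows essentially the same route as the paper: invoke Corollary~\ref{crl_rk=1} to force verticality, then use the one-dimensionality of~$U$ to conclude that generic $H$-orbits coincide with $U$-orbits. The paper is terser---it simply notes that verticality means $H$ preserves generic $U$-orbits (the ``if'' direction of Proposition~\ref{prop_open_B-orbit}\,(\ref{prop_open_B-orbit_a})) and then observes that $\dim U=1$ forces equality---whereas you spell out the hypotheses of Corollary~\ref{crl_rk=1} and unpack the Case~\ref{case1} analysis explicitly, but the logic is the same.
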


\begin{proof}
Clearly, the conditions of Corollary~\ref{crl_rk=1} are satisfied, therefore every $B$-root subgroup on $X$ is vertical and hence preserves generic $U$-orbits in~$X$.
But the group $U$ is one-dimensional, so every $B$-root subgroup on~$X$ is equivalent to~$U$ in this case.
\end{proof}

\begin{remark}
Along with the case $G = \SL_2$, Corollary~\ref{crl_rk=1} may be applied to other natural classes of spherical varieties.
Recall that an $HV$-variety is the closure of the orbit of a highest weight vector in a simple module $V$ of a semisimple group~$G$.
It is known that every $HV$-variety is a normal cone consisting of two orbits---the open orbit and the origin---and the weight monoid of such a variety is generated by the dual of the highest weight of~$V$; see~\cite[\S\,1]{VP72} for details.
By Corollary~\ref{crl_rk=1}, there are only vertical $B$-root subgroups in this case.
An important example of $HV$-varieties is given by Grassmann cones, which are cones of highest weight vectors in the fundamental representations of the group~$\SL_n$.
Another example is the nondegenerate quadratic cone with the action of the group~$\SO_n$.
\end{remark}

\subsection{\texorpdfstring{$B$}{B}-root subgroups arising in the study of spherical subgroups}
\label{subsec_spherical_subgroups}

This subsection serves as a complement to \S\,\ref{subsec_simplest_constructions}.
Here we present one more setting that yields nontrivial examples of affine spherical $G$-varieties equipped with $B$-root subgroups on them.
Besides, we state an open problem in this setting, which provides an extra motivation for studying $B$-root subgroups.

Let $P \subseteq G$ be a parabolic subgroup such that $P \supseteq B^-$ and let $P_u$ denote the unipotent radical of~$P$.
Let $L$ be the unique Levi subgroup of~$P$ containing~$T$ and let $\Pi_L \subseteq \Pi$ be the set of simple roots of~$L$.
Let $Q \subseteq P$ be a closed subgroup with unipotent radical $Q_u$ and a Levi subgroup~$K$.
Suppose that $K \subseteq L$ and $Q_u \subseteq P_u$.

It is known from \cite[Proposition~I.1]{Br87} and~\cite[Theorem~1.2]{Pa94} that the following conditions are equivalent:
\begin{enumerate}
\item
$Q$ is a spherical subgroup of~$G$, i.e. $G/Q$ is a spherical $G$-variety;

\item
$P/Q$ is a spherical $L$-variety (which is automatically smooth and affine);

\item
$P_u/Q_u$ is a spherical $S$-variety (which is an affine space) for a certain reductive subgroup $S \subseteq K$ uniquely determined up to conjugacy by the pair $(L,K)$.
\end{enumerate}

Put $B_L = B \cap L$ and $B_S = B \cap S$, then $B_L$ is a Borel subgroup of~$L$ and the connected component of the identity $B_S^0 \subseteq B_S$ is a Borel subgroup of~$S$ under an appropriate choice of~$S$ within its conjugacy class in~$K$.

Now for every $\alpha \in \Pi \setminus \Pi_L$ the group $U_{-\alpha} \subseteq P$ naturally acts on $P/Q$ and is normalized by~$B_L$.
When this action is nontrivial, it provides a $B_L$-root subgroup on~$P/Q$.
Similarly, $U_{-\alpha} \subseteq P_u$ naturally acts on $P_u/Q_u$ and is normalized by~$B_S^0$.
Again, if this action is nontrivial then it provides a $B_S^0$-root subgroup on $P_u/Q_u$.
We note that in the case $P = B^-$ both groups $L$ and $S^0$ are tori and the above-mentioned root subgroups were considered in~\cite{GP}.

Thanks to~\cite[Lemma~1.4]{Mon}, there is a $K$-equivariant (and hence $S$-equivariant) isomorphism $P_u/Q_u \simeq \mathfrak p_u/\mathfrak q_u$ where $\mathfrak p_u = \Lie P_u$ and $\mathfrak q_u = \Lie Q_u$.
Thus we obtain a $B_S^0$-normalized action of~$U_{-\alpha}$ on the spherical $S$-module $\mathfrak p_u / \mathfrak q_u$.
A criterion for this action to be nontrivial is given by~\cite[Lemma~6.6]{Avd}.
When $U_{-\alpha}$ acts nontrivially on~$\mathfrak p_u / \mathfrak q_u$, it follows from~\cite[Lemma~6.7]{Avd} that the induced $B^0_S$-root subgroup is horizontal if and only if $\alpha$ is a spherical root of~$G/Q$.
As discussed in~\cite{Avd}, the set $\Pi \cap \Sigma(G/Q)$ of simple spherical roots of $G/Q$ plays an essential role in computing several key combinatorial invariants of~$G/Q$.
Thus, it is important to find methods for computing $\Pi \cap \Sigma(G/Q)$ itself, which is equivalent to the following open problem in our setting.

\begin{problem}
Under the above notation and assumptions suppose that $U_{-\alpha}$ acts nontrivially on~$P_u / Q_u$.
Determine whether the induced $B_S^0$-root subgroup on~$P_u / Q_u$ is vertical or horizontal.
\end{problem}


\section{Standard \texorpdfstring{$B$}{B}-root subgroups}
\label{sec5}

In this section, we present and discuss a general construction of horizontal $B$-root subgroups on affine spherical $G$-varieties, which we call standard, and provide several applications.

Let $X$ be an affine spherical $G$-variety and retain all the notation from~\S\,\ref{subsec_ASV_notation}.
Suppose $\partial$ is a horizontal $B$-normalized LND on~$\KK[X]$ of weight~$\mu$ and $\rho \in \mathcal E^1$ is such that $\mu \in \mathfrak R_\rho(\mathcal E)$.
Recall that the restriction of~$\partial$ to $\KK[X]^U$ is $T$-normalized.
Thanks to Theorem~\ref{thm_T-root_subgroups}(\ref{thm_T-root_subgroups_a}) and formula~(\ref{eqn_T-norm_LND}), up to rescaling, we have
\begin{equation} \label{eqn_der_on_f_lambda}
\partial(f_\lambda)=\langle \rho, \lambda \rangle f_\mu f_\lambda
\end{equation}
for all $\lambda \in \Gamma$.
Our construction naturally generalizes formula~(\ref{eqn_der_on_f_lambda}) to the whole $\KK[X]$ when $\mu \in \Gamma(O)$.

\subsection{Description of the construction}

Take any $\mu \in \mathfrak R(\mathcal E) \cap \Gamma(O)$ and let $\rho \in \mathcal E^1$ be such that $\mu \in \mathfrak R_\rho(\mathcal E)$.
We define a linear map $\partial_\mu \colon \KK[X] \to \KK[O]$ as follows.
For every $\lambda \in \Gamma$ and $g \in \KK[X]_\lambda$, we put
\begin{equation} \label{eqn_horo_LND}
\partial_\mu(g) = \langle \rho, \lambda \rangle f_\mu g.
\end{equation}
As $f_\mu$ is a highest-weight vector, $\partial_\mu$ commutes with~$\mathfrak u$ and hence is $B$-normalized.

\begin{proposition}
The image of $\partial_\mu$ is contained in~$\KK[X]$, so that $\partial_\mu$ is a well-defined linear map of $\KK[X]$ to itself.
\end{proposition}

\begin{proof}
Recall that $\langle \rho, \mu \rangle = -1$ and $\langle \rho', \mu \rangle \ge 0$ for all $\rho' \in \mathcal E^1 \setminus \lbrace \rho \rbrace$.
If $\langle \rho, \lambda \rangle = 0$ then $\partial_\mu(g) =\nobreak 0$ for all $g \in \KK[X]_\lambda$, so we assume in what follows that $\langle \rho, \lambda \rangle \ge 1$.
Take $\nu \in\nobreak \Gamma(O)$ such that $\KK[O]_\nu$ is contained in the linear span of $\KK[O]_\lambda \cdot \KK[O]_\mu$ and consider the corresponding tail $\tau = \lambda + \mu - \nu \in \mathcal T$.
Let $\rho' \in \mathcal E^1$ be an arbitrary element; we need to show that
\begin{equation} \label{eqn_ge0}
\langle \rho', \nu \rangle \ge 0.
\end{equation}
We know from~(\ref{eqn_E(X)}) that there is $D \in \mathcal D^B$ such that $\varkappa(D)$ is a positive multiple of~$\rho'$.
If $D \in\nobreak \mathcal D$ then (\ref{eqn_ge0}) holds by~(\ref{eqn_Gamma(O)}), so in what follows we assume $D \in \mathcal D^G$.
Then Proposition~\ref{prop_tail_cone} and Corollary~\ref{crl_tail_cone}(\ref{crl_tail_cone_b}) yield $\langle \rho', \tau \rangle \le 0$, therefore
\[
\langle \rho', \nu \rangle = \langle \rho', \lambda + \mu - \tau \rangle \ge \langle \rho', \lambda\rangle + \langle \rho', \mu \rangle,
\]
and it remains to prove that the latter expression is nonnegative.
If $\rho' \ne \rho$ then both summands are nonnegative.
If $\rho' = \rho$ then $\langle \rho', \lambda \rangle \ge 1$ and $\langle \rho',\mu \rangle = -1$, and we are done.
\end{proof}

\begin{proposition} \label{prop_partial_mu}
The map $\partial_\mu$ is a derivation of~$\KK[X]$ if and only if $\rho \in \mathcal T^\perp$.
Moreover, under these conditions $\partial_\mu$ is locally nilpotent.
\end{proposition}

\begin{proof}
Take arbitrary $\lambda, \lambda' \in \Gamma$ and $g \in \KK[X]_\lambda$, $g' \in \KK[X]_{\lambda'}$.
Then $gg' = h+\sum \limits_{i=1}^k h_i$ where $h \in \KK[X]_{\lambda + \lambda'}$ and $h_i \in \KK[X]_{\nu_i} \setminus \lbrace 0 \rbrace$ for some pairwise distinct weights $\nu_1,\ldots,\nu_k \in \Gamma \setminus \lbrace \lambda + \lambda' \rbrace$.
Observe that $\lambda+\lambda' - \nu_i \in \mathcal T$ for all $i = 1,\ldots, k$.
We have
\begin{multline} \label{eqn_derivation}
\partial_\mu(gg') - g\partial_\mu(g') - g'\partial_\mu(g) =
\langle \rho, \lambda + \lambda' \rangle f_\mu h + \sum \limits_{i=1}^k \langle \rho, \nu_i \rangle f_\mu h_i - \langle \rho, \lambda' \rangle f_\mu gg' - \langle \rho, \lambda \rangle f_\mu gg' = \\
\langle \rho, \lambda + \lambda' \rangle f_\mu h + \sum \limits_{i=1}^k \langle \rho, \nu_i \rangle f_\mu h_i - \langle \rho, \lambda + \lambda' \rangle f_\mu (h +\sum \limits_{i=1}^k h_i) = -\sum \limits_{i=1}^k \langle \rho, \lambda + \lambda' - \nu_i \rangle f_\mu h_i.
\end{multline}
If $\rho \in \mathcal T^\perp$ then the last expression vanishes and hence $\partial_\mu$ is a derivation.

Conversely, suppose that $\partial_\mu$ is a derivation of~$\KK[X]$.
Then the last expression in~(\ref{eqn_derivation}) vanishes; dividing it by $f_\mu$ we obtain $\sum \limits_{i=1}^k \langle \rho, \lambda + \lambda' - \nu_i \rangle h_i = 0$, which implies $\langle \rho, \lambda + \lambda' - \nu_i \rangle = 0$ for all $i = 1,\ldots,k$.
Since every tail $\tau \in \mathcal T$ can be realized as $\lambda + \lambda' - \nu_i$ for an appropriate choice of $\lambda,\lambda',g,g',i$, we obtain $\langle \rho, \tau \rangle = 0$.

Finally, consider the decomposition $\KK[X] = \bigoplus \limits_{i=0}^\infty \KK[X]^{(i)}$ where $\KK[X]^{(i)} = \bigoplus \limits_{\lambda \in \Gamma : \langle \rho, \lambda \rangle = i} \KK[X]_\lambda$.
Then it is easy to see that $\partial_\mu(\KK[X]^{(0)}) = 0$ and $\partial_\mu(\KK[X]^{(i+1)}) \subseteq \KK[X]^{(i)}$ for all $i \in \ZZ_{\ge0}$, so $\partial_\mu$ is locally nilpotent.
\end{proof}

\begin{corollary} \label{cor_partial_mu}
Under the conditions of Proposition~\textup{\ref{prop_partial_mu}}, $\partial_\mu$ is a horizontal $B$-normalized LND of~$\KK[X]$ and the corresponding $B$-root subgroup moves a $G$-stable prime divisor on~$X$.
\end{corollary}

\begin{proof}
Formula~(\ref{eqn_horo_LND}) implies that $\partial_\mu$ does not vanish on~$\KK[X]^U$, hence $\partial_\mu$ is horizontal.
The last assertion follows from Corollary~\ref{crl_hor_Gamma(O)}.
\end{proof}

\begin{definition}
A nonzero $B$-normalized LND on~$\KK[X]$ is called \textit{standard} if it is proportional to the LND $\partial_\mu$ given by~(\ref{eqn_horo_LND}) for some $\rho \in \mathcal E^1 \cap \mathcal T^\perp$ and $\mu \in \mathfrak R_\rho(\mathcal E) \cap \Gamma(O)$.
A~$B$-root subgroup on~$X$ is called \textit{standard} if it corresponds to a standard $B$-normalized LND on~$\KK[X]$.
\end{definition}

\begin{remark} \label{rem_nonstandard}
In general, not all horizontal $B$-root subgroups on affine spherical $G$-varieties that move a $G$-stable prime divisor are standard.
Moreover, this remains valid even for affine horospherical $G$-varieties.
For instance, in the situation of Example~\ref{ex_same_weights} and Remark~\ref{rem_same_weights}, where $X$ is horospherical, the LND $\partial_2 + c\partial_1$ is standard if and only if $c=0$.
In Example~\ref{ex_missing_root_subgroups}, the LND $x_3^k\partial/\partial x_1$ is not standard for all $k \ge 0$.
\end{remark}

\subsection{\texorpdfstring{$G$}{G}-stable prime divisors moved by standard \texorpdfstring{$B$}{B}-root subgroups}

The next proposition provides a necessary and sufficient combinatorial condition for a $G$-stable prime divisor on~$X$ to be moved by a standard $B$-root subgroup.

\begin{proposition} \label{prop_moving_D}
Given $D \in \mathcal D^G$, the following conditions are equivalent.
\begin{enumerate}[label=\textup{(\arabic*)},ref=\textup{\arabic*}]
\item \label{prop_moving_D_1}
There is a standard $B$-root subgroup on~$X$ that moves~$D$.

\item \label{prop_moving_D_2}
$\varkappa(D) \in \mathcal T^\perp$.
\end{enumerate}
\end{proposition}

\begin{proof}
(\ref{prop_moving_D_1})$\Rightarrow$(\ref{prop_moving_D_2})
Suppose $D$ is moved by a standard $B$-root subgroup of weight~$\mu$ and let $\rho \in \mathcal E^1$ be such that $\mu \in \mathfrak R_\rho(\mathcal E)$.
Then $\varkappa(D)$ is a positive multiple of~$\rho$ by Proposition~\ref{prop_D_is_moved}, which implies $\varkappa(D) \in \mathcal T^\perp$ by Proposition~\ref{prop_partial_mu}.

(\ref{prop_moving_D_2})$\Rightarrow$(\ref{prop_moving_D_1})
Let $\widetilde{\mathcal E}$ be the cone in $N_\QQ$ dual to $\QQ_{\ge 0}\Gamma(O)$; observe that $\widetilde{\mathcal E} \subseteq \mathcal E$.
It follows from~(\ref{eqn_Gamma(O)}) that $\widetilde{\mathcal E}$ is generated by the set $\lbrace \varkappa(D') \mid D' \in \mathcal D \rbrace$.
Put $\rho = \varkappa(D)$; then Proposition~\ref{prop_rho_G-stable} yields $\rho \in \mathcal E^1 \setminus \widetilde{\mathcal E}$.
Note that Lemma~\ref{lemma_two_cones} is applicable in this situation, hence the set $\mathfrak R_\rho(\mathcal E) \cap \Gamma(O)$ contains infinitely many elements.
We choose such an element $\mu$ and consider the map $\partial_\mu$ given by~(\ref{eqn_horo_LND}).
As $\rho \in \mathcal T^\perp$, Proposition~\ref{prop_partial_mu} implies that $\partial_\mu$ is a standard $B$-normalized LND on $\KK[X]$.
Thanks to Proposition~\ref{prop_D_is_moved}, the corresponding $B$-root subgroup on~$X$ moves~$D$.
\end{proof}

\subsection{\texorpdfstring{$G$}{G}-root subgroups}

In this subsection we apply the construction of standard $B$-root subgroups to obtain a partial description of $G$-root subgroups on~$X$.
Recall from~\S\,\ref{subsec_root_subgroups} that the weight of every $G$-root subgroup on~$X$ belongs to~$\mathfrak X(G)$, which is identified with a subgroup of~$\mathfrak X(T)$.

\begin{proposition}[{compare with~\cite[Lemma~5.1]{LP}}]
\label{prop_G-RS_is_hor}
Suppose $H$ is a $G$-root subgroup on $X$.
Then $H$ is horizontal as a $B$-root subgroup and uniquely determined by its weight among the $B$-root subgroups on~$X$.
Moreover, $\chi_H \in \mathfrak R(\mathcal E) \cap \mathfrak X(G)$ and in particular $\chi_H \in M$.
\end{proposition}

\begin{proof}
Clearly, $H$ is $B^-$-normalized, therefore by Proposition~\ref{prop_U^-} the corresponding $B^-$-normalized LND $\partial$ on~$\KK[X]$ is uniquely determined by its restriction to~$\KK[X]^U$, which in turn is $T$-normalized.
By Theorem~\ref{thm_T-root_subgroups}(\ref{thm_T-root_subgroups_a}), up to proportionality, the latter restriction is uniquely determined by its weight.
Note that $\partial$ acts nontrivially on $\KK[X]^U$, so $H$ is horizontal.
By Proposition~\ref{prop_weight_is_dom}(\ref{prop_weight_is_dom_b}), there are no other $B$-root subgroups on~$X$ of weight~$\chi_H$.
The last claim is implied by Proposition~\ref{prop_weight_of_HRS}.
\end{proof}

\begin{corollary} \label{crl_no_G-root_subgroups}
If $G$ is semisimple then there are no $G$-root subgroups on~$X$.
\end{corollary}

\begin{proof}
The claim follows from $\mathfrak X(G) = \lbrace 0 \rbrace$ and $0 \notin \mathfrak R(\mathcal E)$.
\end{proof}

The next result shows that all $G$-normalized LNDs on $\KK[X]$ act in a rather simple way.

\begin{proposition} \label{prop_G-norm_LND}
Let $\partial$ be a $G$-normalized LND on~$\KK[X]$ of weight~$\mu$ and let $\rho \in \mathcal E^1$ be such that $\mu \in \mathfrak R_\rho(\mathcal E)$.
Then there is $c \in \KK^\times$ such that, for every $\lambda \in \Gamma$, $\partial$ acts on $\KK[X]_\lambda$ as follows:
\begin{itemize}
\item
if $\langle \rho, \lambda \rangle = 0$ then $\partial(\KK[X]_\lambda) = 0$;

\item
if $\langle \rho, \lambda \rangle > 0$ then the restriction of $\partial$ to $\KK[X]_\lambda$ is a $(G,G)$-equivariant isomorphism $\KK[X]_\lambda \xrightarrow{\sim} \KK[X]_{\lambda + \mu}$ such that $\partial(f_\lambda) = c\langle \rho, \lambda \rangle f_{\lambda+\mu}$.
\end{itemize}
\end{proposition}

\begin{proof}
The assertion is implied by formula~(\ref{eqn_der_on_f_lambda}) for the restriction of $\partial$ to $\KK[X]^U$ and the $(G,G)$-invariance of~$\partial$.
\end{proof}

Let $\widetilde M \subseteq \mathfrak X(G)$ be the lattice of weights of $G$-semiinvariant functions in~$\KK(X)$.
Since every such function is automatically regular on~$O$, we have
\begin{equation} \label{eqn_G-semiinv}
\widetilde M = \mathfrak X(G) \cap \Gamma(O).
\end{equation}
The next result provides a description of all weights of $G$-root subgroups on~$X$ that belong to~$\widetilde M$.

\begin{theorem} \label{thm_G-root_subgroups}
For a weight $\mu \in \widetilde M$, the following conditions are equivalent.
\begin{enumerate}[label=\textup{(\arabic*)},ref=\textup{\arabic*}]
\item \label{thm_G-root_subgroups_1}
$\mu$ is the weight of a $G$-root subgroup on~$X$.

\item \label{thm_G-root_subgroups_2}
$\mu \in \mathfrak R_\rho(\mathcal E)$ for some $\rho \in \mathcal E^1 \cap \mathcal T^\perp$.
\end{enumerate}
Moreover, under these conditions the $G$-root subgroup of weight~$\mu$ on~$X$ is automatically standard as a $B$-root subgroup.
\end{theorem}

\begin{proof}
(\ref{thm_G-root_subgroups_1})$\Rightarrow$(\ref{thm_G-root_subgroups_2})
Let $H$ be a $G$-root subgroup on~$X$ of weight $\mu \in \widetilde M$ and let $\partial$ be the corresponding $G$-normalized LND on $\KK[X]$.
We know from Proposition~\ref{prop_G-RS_is_hor} that $H$ is horizontal as a $B$-root subgroup and there is $\rho \in \mathcal E^1$ such that $\mu \in \mathfrak R_\rho(\mathcal E)$.
In view of~(\ref{eqn_G-semiinv}), the function $f_\mu $ belongs to~$\KK[O]$ and is $G$-semiinvariant, therefore for every $\lambda \in \Gamma$ with $\langle \rho, \lambda \rangle > 0$ the map $g \mapsto f_\mu g$ induces a $(G,G)$-equivariant isomorphism $\KK[X]_\lambda \to \KK[X]_{\lambda+\mu}$.
Now Proposition~\ref{prop_G-norm_LND} yields $\partial = c\partial_\mu$ for some $c \in \KK^\times$, hence $\rho \in \mathcal T^\perp$ by Proposition~\ref{prop_partial_mu}.

(\ref{thm_G-root_subgroups_2})$\Rightarrow$(\ref{thm_G-root_subgroups_1})
By~(\ref{eqn_G-semiinv}) and Proposition~\ref{prop_partial_mu}, $\partial_\mu$ is a $B$-normalized LND on $\KK[X]$ of weight~$\mu$.
As $f_\mu$ is $G$-semiinvariant, $\partial_\mu$ is in fact $G$-normalized.
\end{proof}

\begin{remark}
Describing all weights of $G$-root subgroups on~$X$ that do not belong to $\widetilde M$ remains an open problem.
Such $G$-root subgroups may exist: in Example~\ref{ex_moved_colors} we have seen $G$-root subgroups on~$X$ that move a color; their weights do not belong to~$\Gamma(O)$ (and hence to $\widetilde M$) by Corollary~\ref{crl_hor_Gamma(O)}.
\end{remark}

Combining Theorem~\ref{thm_G-root_subgroups} with~(\ref{eqn_G-semiinv}) we obtain the following complete description of weights of \textit{all} $G$-root subgroups for a large class of affine spherical $G$-varieties.

\begin{corollary} \label{crl_G-root_subgroups}
Suppose that $\Gamma(O) = M \cap \Lambda^+$.
Then, for a weight $\mu \in \mathfrak X(G)$, the following conditions are equivalent.
\begin{enumerate}[label=\textup{(\arabic*)},ref=\textup{\arabic*}]
\item \label{crl_G-root_subgroups_1}
$\mu$ is the weight of a $G$-root subgroup on~$X$.

\item \label{crl_G-root_subgroups_2}
$\mu \in \mathfrak R_\rho(\mathcal E)$ for some $\rho \in \mathcal E^1 \cap \mathcal T^\perp$.
\end{enumerate}
Moreover, all $G$-root subgroups on~$X$ are standard as $B$-root subgroups.
\end{corollary}

A refined version of Corollary~\ref{crl_G-root_subgroups} in the horospherical case is provided by Proposition~\ref{prop_horo_weights}(\ref{prop_horo_weights_a}) below.

\subsection{The horospherical case}
\label{subsec_horo_case}

Let $X$ be an affine horospherical $G$-variety (see~\S\,\ref{subsec_aff_horo}) and recall from Theorem~\ref{thm_horospherical}, Remark~\ref{rem_Sigma_is_empty}, and Proposition~\ref{prop_Gamma(O)} that $\mathcal T = \lbrace 0 \rbrace$ and $\Gamma(O) = M \cap \Lambda^+$.
In particular, we automatically get $\mathcal E^1 \subseteq \mathcal T^\perp$.

\begin{proposition} \label{prop_horo_weights}
The following assertions hold.
\begin{enumerate}[label=\textup{(\alph*)},ref=\textup{\alph*}]
\item \label{prop_horo_weights_a}
The set of weights of $G$-root subgroups on~$X$ is $\mathfrak R(\mathcal E) \cap \mathfrak X(G)$.
Moreover, all $G$-root subgroups on~$X$ are standard as $B$-root subgroups.

\item \label{prop_horo_weights_b}
The set of weights of horizontal $B$-root subgroups on~$X$ is $\mathfrak R(\mathcal E) \cap \Lambda^+$.
Moreover, for every $\mu \in \mathfrak R(\mathcal E) \cap \Lambda^+$ there is a standard $B$-root subgroup on~$X$ of weight~$\mu$.
\end{enumerate}
\end{proposition}

\begin{proof}
(\ref{prop_horo_weights_a})
The claim follows directly from Corollary~\ref{crl_G-root_subgroups}.

(\ref{prop_horo_weights_b})
Recall from Remark~\ref{rem_DR_are_in_M} that $\mathfrak R(\mathcal E) \subseteq M$, which yields $\mathfrak R(\mathcal E) \cap \Lambda^+ = \mathfrak R(\mathcal E) \cap \Gamma(O)$ by Proposition~\ref{prop_Gamma(O)}.
Now Proposition~\ref{prop_partial_mu} and Corollary~\ref{cor_partial_mu} imply that every $\mu \in \mathfrak R(\mathcal E) \cap \Lambda^+$ is the weight of a standard $B$-root subgroup on~$X$.
It remains to apply Proposition~\ref{prop_weight_of_HRS}.
\end{proof}

\begin{example}[$G$-root subgroups on affine horospherical $G$-varieties]
Take $G = \SL_2 \times \KK^\times$ and let $\alpha$ (resp. $\chi$) be the unique positive root of $\SL_2$ (resp. a basis character of $\KK^\times$), so that $\Lambda^+ = \ZZ_{\ge0} \frac{\alpha}2 \oplus \ZZ\chi$ and $\mathfrak X(G) = \ZZ\chi$.
Suppose that $X$ is the affine horospherical $G$-variety with $\Gamma = \ZZ_{\ge0} \lbrace a\alpha + \chi, b\alpha - \chi\rbrace$ for fixed $a,b \in \ZZ_{>0}$.
Then easy calculations of the set $\mathfrak R(\mathcal E) \cap \mathfrak X(G)$ show that there are no $G$-root subgroups on~$X$ if none of the numbers $a,b$ is divisible by the other, there are exactly two such subgroups when $a=b=1$, and such subgroup is unique otherwise.
We note that there are infinitely many horizontal $B$-root subgroups on~$X$ regardless of the values of~$a,b$.
\end{example}

\begin{example}[A horizontal $B$-root subgroup on an affine horospherical variety of a simple group]
Take $G = \SL_n$ ($n \ge 3$) and choose $B$ (resp.~$T$) to be the subgroup of all upper-triangular (resp. diagonal) matrices in~$G$.
Consider the $G$-module $W = \Mat_{n\times n}(\KK) \oplus \KK^n$ with the action given by $(g, (A,v)) \mapsto (gAg^{-1}, gv)$ and let $X$ be the closure in~$W$ of the $G$-orbit of the pair of highest-weight vectors
\[
(\begin{pmatrix}
0 & \cdots & 0 & 1 \\
0 & \cdots & 0 & 0 \\
\vdots & \iddots & \vdots & \vdots \\
0 & \cdots & 0 & 0
\end{pmatrix},
\begin{pmatrix}
1 \\ 0 \\ \vdots \\ 0
\end{pmatrix}).
\]
In other words, $X = \lbrace (A,v) \in W \mid \tr(A) = 0, \ \rk(A\,|\, v) \le 1 \rbrace$.
In view of Remark~\ref{rem_AHV_construction}, $X$~is an affine horospherical $G$-variety and $\Gamma = \ZZ_{\ge0}\lbrace \omega_1 +\nobreak \omega_{n-1}, \omega_{n-1}\rbrace$ where $\omega_i \in \mathfrak X(T)$ is the $i$th fundamental weight of~$G$, so that $\omega_i(t) = t_1\ldots t_i$ for all $t = \diag(t_1,\ldots,t_n) \in T$.

For $i,j=1,\ldots,n$ let $a_{ij}$ (resp.~$x_i$) denote the restriction to~$X$ of the $(ij)$th coordinate function on $\Mat_{n\times n}(\KK)$ (resp. $i$th coordinate function on~$\KK^n$).
Then the algebra~$\KK[X]^U$ is freely generated by the two functions $a_{n1}$ and $x_n$ of weights $\omega_1 + \omega_{n-1}$ and $\omega_{n-1}$, respectively.

The weight $\mu=\omega_1$ is a Demazure root of the cone~$\mathcal E$.
Applying formula~(\ref{eqn_horo_LND}),
we see that the standard LND $\partial_{\mu}$ on~$\KK[X]$ annihilates all functions $a_{ij}$ and sends $x_i$ to $a_{i1}$ for all~$i=1,\ldots,n$.
This shows that the
corresponding $B$-root subgroup $H$ on~$X$ acts as
\[
(s,(A,v)) \mapsto (A,v+sA_1)
\]
where $A_1$ is the first column of the matrix $A$.
This subgroup moves the $G$-stable prime divisor $\{v=0\}$ on~$X$.
\end{example}

The following theorem confirms Conjecture~\ref{conj_G-stable} for affine horospherical $G$-varieties.

\begin{theorem} \label{thm_moved_divisors}
For every $D \in \mathcal D^G$, there is a $B$-root subgroup on $X$ that moves~$D$.
\end{theorem}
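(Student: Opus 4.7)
The plan is to take the $G$-stable prime divisor $D\subset X$, extract an element $\rho_D\in\mathcal E^1\setminus\widetilde{\mathcal E}$ from it, and show that one of the horizontal $B$-root subgroups produced in the proof of Proposition~\ref{prop_horo_weights} from a Demazure root in $\mathfrak R_{\rho_D}(\mathcal E)\cap\Lambda^+$ moves~$D$. The element $\rho_D$ comes from the divisorial valuation $\nu_D$ of $\KK(X)$ defined by~$D$. Since $D$ is $G$-stable, $\nu_D$ is $G$-invariant and the ideal $I(D)$ is a $G$-submodule of $\KK[X]$; by multiplicity-freeness $I(D)=\bigoplus_{\lambda\in S}\KK[X]_\lambda$ for some $S\subset\Gamma$. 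Multiplicativity of $\nu_D$ together with the horospherical relation $f_\lambda f_{\lambda'}=f_{\lambda+\lambda'}$ makes $\lambda\mapsto\nu_D(f_\lambda)$ additive on $\Gamma$, so $\nu_D(f_\lambda)=\langle\rho_D,\lambda\rangle$ for a unique $\rho_D\in N$, and $S=\{\lambda\in\Gamma\mid\langle\rho_D,\lambda\rangle>0\}$. Non-negativity of $\nu_D$ on $\KK[X]$ places $\rho_D\in\mathcal E$, divisoriality places $\QQ_{\ge 0}\rho_D$ on a ray of $\mathcal E$, and the inclusion $D\subset X\setminus\mathcal O$ prevents $\nu_D$ from extending non-negatively to $\KK[\mathcal O]$, forcing $\rho_D\notin\widetilde{\mathcal E}:=\mathcal G(\mathcal O)^\vee$.

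Next I would feed $\rho_D$ into Lemma~\ref{lemma_two_cones} with $\widetilde{\mathcal G}=\mathcal G(\mathcal O)$; the inclusion $\mathcal G\subset\widetilde{\mathcal G}$ holds because $\Gamma\subset\Gamma(\mathcal O)=\ZZ\Gamma\cap\Lambda^+$. The lemma supplies some $\mu\in\mathfrak R_{\rho_D}(\mathcal E)\cap\mathcal G(\mathcal O)$, and since $\mathfrak R_{\rho_D}(\mathcal E)\subset M$, such $\mu$ automatically lies in $\Gamma(\mathcal O)\subset\Lambda^+$. Formula~\eqref{eqn_horo_LND} then provides a horizontal $B$-normalized LND $\partial_\mu$ on $\KK[X]$. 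To verify $\partial_\mu$ moves~$D$, it suffices to produce $\lambda_0\in\Gamma$ with $\langle\rho_D,\lambda_0\rangle=1$: then $f_{\lambda_0}\in I(D)$, and
\[
\partial_\mu(f_{\lambda_0})=\langle\rho_D,\lambda_0\rangle\, f_\mu f_{\lambda_0}=f_{\lambda_0+\mu}\in\KK[X]_{\lambda_0+\mu},
\]
where $\langle\rho_D,\lambda_0+\mu\rangle=1-1=0$, so $f_{\lambda_0+\mu}\notin I(D)$ and $\partial_\mu(I(D))\not\subset I(D)$.

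The principal technical point, and the hardest step, is guaranteeing such a $\lambda_0$. Primitivity of $\rho_D$ in~$N$ provides some $\lambda\in M$ with $\langle\rho_D,\lambda\rangle=1$, but~$\lambda$ need not belong to~$\Gamma$. I would remedy this by choosing a lattice point $\mu'\in\Gamma$ in the relative interior of the facet $\mathcal F_{\rho_D}=\mathcal G\cap\rho_D^\perp$ (such $\mu'$ exists because $\mathcal F_{\rho_D}$ has codimension one in the full-dimensional cone $\mathcal G$ and $\Gamma=\mathcal G\cap M$ is saturated; for instance the sum of primitive generators of the extremal rays of $\mathcal F_{\rho_D}$ works) and setting $\lambda_0:=\lambda+k\mu'$ for $k$ sufficiently large. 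This keeps $\langle\rho_D,\lambda_0\rangle=1$ while making $\langle\rho',\lambda_0\rangle>0$ for every $\rho'\in\mathcal E^1\setminus\{\rho_D\}$, so $\lambda_0\in\mathcal G\cap M=\Gamma$. A subsidiary technicality is the identification in the first step of $G$-stable prime divisors with elements of $\mathcal E^1\setminus\widetilde{\mathcal E}$; this follows from the standard valuation-cone description of horospherical homogeneous spaces together with the $\Gamma$-grading of $\KK[X]$.
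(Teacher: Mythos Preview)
Your proof is correct and follows the same route as the paper: associate to $D$ an element $\rho\in\mathcal E^1\setminus\widetilde{\mathcal E}$ with $I(D)=\bigoplus_{\langle\rho,\lambda\rangle>0}\KK[X]_\lambda$ (the paper simply cites~\cite{VP72} for this, while you sketch a valuation-theoretic derivation), apply Lemma~\ref{lemma_two_cones} to produce $\mu\in\mathfrak R_\rho(\mathcal E)\cap\Lambda^+$, and check that the LND $\partial_\mu$ of~\eqref{eqn_horo_LND} does not preserve~$I(D)$.

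The one place you work harder than necessary is the last step. You single out as ``the principal technical point, and the hardest step'' the construction of some $\lambda_0\in\Gamma$ with $\langle\rho_D,\lambda_0\rangle=1$, but no such element is needed. From~\eqref{eqn_horo_LND} one has $\Ker\partial_\mu=\bigoplus_{\langle\rho,\lambda\rangle=0}\KK[X]_\lambda$, so $\KK[X]=\Ker\partial_\mu\oplus I(D)$, exactly as in the toric situation of~\S\ref{subsec_RS_on_ATV}. If $I(D)$ were $\partial_\mu$-stable then by local nilpotency $\partial_\mu|_{I(D)}$ would have a nonzero kernel, contradicting $I(D)\cap\Ker\partial_\mu=0$. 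Equivalently, for any $\lambda\in\Gamma$ with $m:=\langle\rho,\lambda\rangle>0$ one computes $\partial_\mu^{\,m}(f_\lambda)=m!\,f_{\lambda+m\mu}\ne 0$ with $\langle\rho,\lambda+m\mu\rangle=0$, so $\partial_\mu^{\,m}(f_\lambda)\notin I(D)$; there is no need to arrange $m=1$. The paper compresses all of this into the single clause ``it follows from the construction that the ideal $I_D$ is $\partial_\mu$-unstable''.
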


\begin{proof}
Since $\mathcal T = \lbrace 0 \rbrace$, every $D \in \mathcal D^G$ satisfies $\varkappa(D) \in \mathcal T^\perp$, which implies the assertion by Proposition~\ref{prop_moving_D}.
\end{proof}

\begin{remark}
It is shown in~\cite{GS} that, for an affine horospherical $G$-variety $X$ satisfying $\Gamma \cap (-\Gamma) = \lbrace 0 \rbrace$ (i.e. the cone $\mathcal G$ is strictly convex), the subgroup in $\Aut(X)$ generated by all $\GG_a$-subgroups acts on the regular locus of $X$ transitively.
This implies that every smooth point on a $G$-stable prime divisor in~$X$ can be moved to a point in the open $G$-orbit by an appropriate sequence of (not necessary root) $\GG_a$-subgroups.
The same transitivity property for an arbitrary affine spherical $G$-variety $X$ with strictly convex cone $\mathcal{G}$ is an open problem.
\end{remark}


\section{Reductive groups of semisimple rank one acting on toric varieties}
\label{sec6}

Throughout this section, $G$ is a connected reductive linear algebraic group of semisimple rank one.
Replacing $G$ by a finite covering, we assume that $G=\SL_2\times S$ where $S$ is an algebraic torus.
Let $T_0, U \subseteq \SL_2$ be a maximal torus and a maximal unipotent subgroup normalized by~$T_0$, respectively.
Then $T = T_0 \times S$ and $B = TU$ are a maximal torus and a Borel subgroup of~$G$, respectively.
Let $\alpha \in \mathfrak X(T)$ be the unique positive root of~$G$ with respect to~$B$ and let $\alpha^\vee \in \Hom(\mathfrak X(T),\ZZ)$ be the corresponding dual root, so that $\langle \alpha^\vee, \alpha \rangle = 2$ and $\langle \alpha^\vee, \chi \rangle = 0$ for all $\chi \in \mathfrak X(S)$.
As before, $X$ denotes an affine spherical $G$-variety, and we retain all the notation of~\S\,\ref{subsec_ASV_notation}.

Our main goal in this section is to obtain a complete description of the $B$-root subgroups on~$X$ in the case where $X$ is toric as a $T$-variety.
To this end, we first obtain a complete description of the $T$-root subgroups on~$X$ in terms of the weight monoid~$\Gamma$ and then determine which of the $T$-root subgroups are in fact $B$-root subgroups on~$X$.

\subsection{A criterion for the existence of an open \texorpdfstring{$T$}{T}-orbit}

For every $\lambda \in \Lambda^+$, we put $d_\lambda = \langle \alpha^\vee, \lambda \rangle$ for short.

\begin{proposition} \label{prop_criterion_toric}
The following conditions are equivalent.
\begin{enumerate}[label=\textup{(\arabic*)},ref=\textup{\arabic*}]
\item \label{}
$X$ is toric as a $T$-variety.

\item
$\alpha \notin \QQ\Gamma$.
\end{enumerate}
\end{proposition}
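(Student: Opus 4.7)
The strategy is to compare $\dim X$ with the rank of the sublattice $\Lambda \subseteq \mathfrak X(T)$ generated by all $T$-weights occurring in $\KK[X]$. The effective torus acting on $X$ (that is, $T$ modulo the kernel of its action) has character lattice exactly $\Lambda$, and since $X$ is normal and irreducible, $X$ is toric for $T$ iff this effective torus has an open orbit, iff $\dim X = \rk \Lambda$. So it suffices to compute both numbers in terms of $\Gamma$ and~$\alpha$.

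To identify $\Lambda$, I use the decomposition $\KK[X] = \bigoplus_{\lambda \in \Gamma} \KK[X]_\lambda$ from~\eqref{eqn_decomp}: each summand is a simple $G$-module, and since $G' = \SL_2$, its set of $T$-weights is the string $\lambda, \lambda - \alpha, \ldots, \lambda - d_\lambda \alpha$. Taking the union over $\lambda \in \Gamma$, one sees that $\Lambda = \ZZ\Gamma$ if every $d_\lambda$ vanishes, and $\Lambda = \ZZ\Gamma + \ZZ\alpha$ otherwise.

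To compute $\dim X$, I apply the quotient morphism $\pi_U \colon X \to Z = \Spec \KK[X]^U$ from~\eqref{eqn_pi_U}; since $Z$ is an affine toric $T$-variety with weight monoid $\Gamma$, we have $\dim Z = \rk \Gamma$. A generic fibre of $\pi_U$ is a single $U$-orbit (cf.~\S\ref{subsec_gen_sph}), and $\dim U = 1$ because $G$ has semisimple rank one. Hence $\dim X - \rk \Gamma \in \{0, 1\}$, with the value $0$ occurring precisely when $U$ acts trivially on $X$, which by the representation theory of $\SL_2$ is equivalent to $d_\lambda = 0$ for every $\lambda \in \Gamma$.

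Putting the two computations together yields the equivalence. If every $d_\lambda = 0$, then $\dim X = \rk \Gamma = \rk \Lambda$, so $X$ is toric; moreover $\alpha^\vee$ vanishes on $\QQ\Gamma$ while $\langle \alpha^\vee, \alpha \rangle = 2$, forcing $\alpha \notin \QQ\Gamma$. If some $d_\lambda \ne 0$, then $\dim X = \rk \Gamma + 1$ while $\rk \Lambda = \rk(\ZZ\Gamma + \ZZ\alpha)$ equals $\rk \Gamma + 1$ precisely when $\alpha \notin \QQ\Gamma$; hence $X$ is toric iff $\alpha \notin \QQ\Gamma$. The step requiring the most care is the passage to the effective torus and the criterion $\dim X = \rk \Lambda$, but this is standard for normal irreducible $T$-varieties; the rest is bookkeeping with $\SL_2$-weight strings and the fibres of $\pi_U$.
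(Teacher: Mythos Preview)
Your proof is correct but proceeds along a genuinely different route from the paper's. The paper appeals directly to the Vinberg--Kimel'fel'd criterion (Theorem~\ref{thm_VK}): $X$ is toric for~$T$ if and only if the $T$-module $\KK[X]$ is multiplicity free. Writing $\KK[X]_\lambda = \bigoplus_{i=0}^{d_\lambda} \KK[X]_{\lambda,i}$ with $\KK[X]_{\lambda,i}$ one-dimensional of $T$-weight $\lambda - i\alpha$, it then checks when two such weight spaces can share a weight: an equality $\lambda - k\alpha = \mu - l\alpha$ with $(\lambda,k) \ne (\mu,l)$ forces $\alpha \in \QQ\Gamma$, and conversely if $k\alpha = \lambda - \mu$ with $k>0$ and $\lambda,\mu \in \Gamma$ then $d_\lambda \ge 2k$, so $\KK[X]_{\lambda,k}$ and $\KK[X]_{\mu,0}$ collide. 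Your argument instead compares $\dim X$ with the rank of the full $T$-weight lattice~$\Lambda$, computing the former via the fibration~$\pi_U$ and splitting on whether $U$ acts trivially. The paper's approach is a bit shorter and entirely combinatorial, with no case distinction; yours is more geometric and yields as a dividend the dimension formula $\dim X \in \lbrace \rk\Gamma,\, \rk\Gamma + 1 \rbrace$. One small imprecision worth noting: the character lattice of the effective torus $T/\Ker$ is the \emph{saturation} of~$\Lambda$ in~$\mathfrak X(T)$, not $\Lambda$ itself, but since the two have the same rank this does not affect your argument.
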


\begin{proof}
It follows from the representation theory of~$\SL_2$ that
for every $\lambda \in \Gamma$ there is a decomposition $\KK[X]_\lambda = \bigoplus \limits_{i=0}^{d_{\lambda}} \KK[X]_{\lambda,i}$ where $\KK[X]_{\lambda, i} \subseteq \KK[X]_\lambda$ is a one-dimensional $T$-submodule of weight~$\lambda - i\alpha$.
Then we have a $T$-module decomposition
\begin{equation} \label{eqn_toric_decomp}
\KK[X] = \bigoplus_{\lambda \in \Gamma} \bigoplus_{i=0}^{d_\lambda} \KK[X]_{\lambda, i}.
\end{equation}
Recall from Theorem~\ref{thm_VK} that $X$ is toric as a $T$-variety if and only if $\KK[X]$ is a multiplicity-free $T$-module.

If $\alpha \in \QQ\Gamma$ then $k\alpha = \lambda - \mu$ for some $k \in \ZZ_{>0}$ and $\lambda, \mu \in \Gamma$.
Then $\KK[X]_{\lambda,k} \simeq \KK[X]_{\mu,0}$ as $T$-modules and thus the $T$-module $\KK[X]$ is not multiplicity free.

Conversely, if $\KK[X]$ is not multiplicity free as a $T$-module then there are $\lambda, \mu \in \Gamma$ and $k,l\in \ZZ$ such that $\lambda \ne \mu$, $0 \le k \le d_\lambda$, $0 \le l \le d_\mu$, and $\KK[X]_{\lambda,k} \simeq \KK[X]_{\mu,l}$ as $T$-modules.
It follows that $\lambda - k\alpha = \mu - l\alpha$, which implies $\alpha \in \QQ\Gamma$ as $k \ne l$.
\end{proof}

\begin{corollary} \label{crl_SL2_horo}
If $X$ is toric as a $T$-variety then $X$ is horospherical as a $G$-variety.
\end{corollary}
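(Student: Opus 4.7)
The plan is to combine Proposition~\ref{prop_criterion_toric} with Theorem~\ref{thm_horospherical}. Assuming $X$ is a toric $T$-variety, Proposition~\ref{prop_criterion_toric} gives $\alpha \notin \QQ\Gamma$. By Theorem~\ref{thm_horospherical}, to prove $X$ horospherical it suffices to verify that decomposition~(\ref{eqn_decomp}) is a grading, i.e.\ that $\KK[X]_\lambda \cdot \KK[X]_\mu \subset \KK[X]_{\lambda + \mu}$ for all $\lambda, \mu \in \Gamma$. Note that $X$ is already $G$-spherical by hypothesis of the section, so Theorem~\ref{thm_horospherical} applies.

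The key step is to bound, via representation theory, the possible highest weights that can appear in the $G$-module generated by $\KK[X]_\lambda \cdot \KK[X]_\mu$. Since $G = \SL_2 \times T'$, the Clebsch-Gordan rule gives a $G$-module decomposition
\[
\KK[X]_\lambda \otimes \KK[X]_\mu \simeq \bigoplus_{k=0}^{\min(d_\lambda, d_\mu)} V(\lambda + \mu - k\alpha),
\]
where $V(\nu)$ denotes the simple $G$-module with highest weight~$\nu$. The image of the $G$-equivariant multiplication map $\KK[X]_\lambda \otimes \KK[X]_\mu \to \KK[X]$ is therefore contained in $\bigoplus_{k=0}^{\min(d_\lambda,d_\mu)} \KK[X]_{\lambda + \mu - k\alpha}$, where only those $k$ with $\lambda + \mu - k\alpha \in \Gamma$ give nonzero summands.

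It remains to argue that for every $k \ge 1$ the weight $\lambda + \mu - k\alpha$ lies outside~$\Gamma$. Indeed, $\lambda + \mu \in \Gamma \subset \ZZ\Gamma$, so if $\lambda + \mu - k\alpha$ were in $\Gamma \subset \ZZ\Gamma$, taking the difference would give $k\alpha \in \ZZ\Gamma$, hence $\alpha \in \QQ\Gamma$; this contradicts the conclusion of Proposition~\ref{prop_criterion_toric}. Thus only the summand with $k=0$ can contribute, yielding $\KK[X]_\lambda \cdot \KK[X]_\mu \subset \KK[X]_{\lambda + \mu}$ as required.

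I do not foresee any substantial obstacle: once the $\SL_2$ Clebsch--Gordan decomposition is invoked and the simple arithmetic inside $\ZZ\Gamma$ and $\QQ\Gamma$ is performed, the grading property drops out. The only points requiring mild care are to correctly apply the implication (2)$\Rightarrow$(1) of Theorem~\ref{thm_horospherical} and to keep track of the fact that the $T'$-factor does not affect the $\SL_2$ weight shifts, so all shifts between simple $G$-submodules in a tensor product are by multiples of the single root~$\alpha$.
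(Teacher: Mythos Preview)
Your proof is correct and follows essentially the same route as the paper: both reduce to the grading condition via Theorem~\ref{thm_horospherical}, consider the $G$-equivariant multiplication map $\KK[X]_\lambda \otimes \KK[X]_\mu \to \KK[X]$, observe that any simple constituent of its image has highest weight differing from $\lambda+\mu$ by a nonnegative multiple of~$\alpha$, and rule out nonzero multiples using $\alpha \notin \QQ\Gamma$ from Proposition~\ref{prop_criterion_toric}. The only cosmetic difference is that you spell out the Clebsch--Gordan decomposition explicitly, whereas the paper simply asserts that $\lambda+\mu-\nu$ is a positive multiple of~$\alpha$ for any other $\nu$ appearing in the image.
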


\begin{proof}
Thanks to Theorem~\ref{thm_horospherical}, it suffices to prove that $\KK[X]_\lambda \cdot \KK[X]_\mu \subseteq \KK[X]_{\lambda+\mu}$ for all $\lambda, \mu \in \Gamma$.
The multiplication of $\KK[X]_\lambda$ and $\KK[X]_\mu$ induces a $G$-module homomorphism $\varphi \colon \KK[X]_\lambda \otimes \KK[X]_\mu \to \KK[X]$, and it suffices to show that $\Im \varphi = \KK[X]_{\lambda +\mu}$.
As $f_\lambda \cdot f_\mu = f_{\lambda+\mu}$, we have $\KK[X]_{\lambda+\mu} \subseteq \Im \varphi$.
If $\KK[X]_{\nu} \subseteq \Im \varphi$ for some $\nu \in \Gamma \setminus \lbrace \lambda + \mu \rbrace$ then $\lambda+\mu - \nu$ is a positive multiple of~$\alpha$, which is impossible by Proposition~\ref{prop_criterion_toric}.
\end{proof}

\subsection{Auxiliary results}
\label{subsec_aux_res}

In what follows we assume that $X$ is toric as a $T$-variety and the derived subgroup $(G,G) = \SL_2$ acts nontrivially on~$X$, which is equivalent to $\Gamma \not\subseteq \mathfrak X(S)$.
Now $X$ can be regarded both as a spherical $G$-variety and as a toric $T$-variety.
Each of these points of view involves its own combinatorial data attached to~$X$, and in this subsection we clarify relations between them.

Let $w$ be the nontrivial element of the Weyl group $N(T)/T$, where $N(T)$ is the normalizer of~$T$ in~$G$.
Then $w$ naturally acts on $\mathfrak X(T)$ in such a way that $w(\alpha) = -\alpha$ and $w(\chi) = \chi$ for all $\chi \in \mathfrak X(S)$.

Put $\overline M = M \oplus \ZZ\alpha \subseteq \mathfrak X(T)$, $\overline N = \Hom_\ZZ(\overline M, \ZZ)$, $\overline M_\QQ = \overline M \otimes_\ZZ \QQ$, $\overline N_\QQ = \overline N \otimes_\ZZ \QQ$ and extend the pairing $\langle \cdot\,, \cdot \rangle$ to a bilinear map $\overline M_\QQ \times \overline N_\QQ \to \QQ$.
For every cone $\mathcal C \subseteq M_\QQ$, let $\overline{\mathcal C}$ denote the cone in $\overline M_\QQ$ generated by $\mathcal C \cup w(\mathcal C)$.
It follows from~(\ref{eqn_toric_decomp}) that the weight monoid of $X$ as a toric $T$-variety is $\overline \Gamma := \overline{\mathcal G} \cap \overline M$.
Let $\overline{\mathcal E} \subseteq \overline N_\QQ$ be the cone dual to~$\overline{\mathcal G}$.

To describe the $T$-root subgroups on~$X$, we need to know the facets of $\overline{\mathcal G}$ or, equivalently, the set $\overline{\mathcal E}^1$.
It is easy to see that $\mathcal G, w(\mathcal G)$ are facets of $\overline{\mathcal G}$, and we let $\delta, \delta' \in \overline{\mathcal E}^1$ be the corresponding elements.
Observe that
\begin{gather}
\langle \delta, M \rangle = 0, \;\; \langle \delta, \alpha \rangle = -1; \label{eqn_delta} \\
\langle \delta', w(M) \rangle =0, \;\; \langle \delta', \alpha \rangle = 1. \label{eqn_delta'}
\end{gather}
(The precise values of $\langle \delta, \alpha \rangle$ and $\langle \delta', \alpha \rangle$ are implied by the primitivity of $\delta,\delta'$ in~$\overline N$.)

\begin{lemma} \label{lemma_delta'}
One has $\iota(\delta') = \iota(\alpha^\vee)$.
\end{lemma}

\begin{proof}
Take any $\lambda \in M$.
Since $w(\lambda) = \lambda - \langle \alpha^\vee, \lambda \rangle \alpha$, by~(\ref{eqn_delta'}) one has
$\langle \delta', \lambda - \langle \alpha^\vee,\lambda \rangle \alpha \rangle = 0$ and therefore $\langle \delta', \lambda \rangle = \langle \delta', \alpha \rangle \langle \alpha^\vee, \lambda \rangle = \langle \alpha^\vee, \lambda \rangle$ as required.
\end{proof}

Let $\mathcal Q \subseteq \overline M_\QQ$ be the subspace spanned by $\overline M_\QQ \cap \mathfrak X(S)$; one has $M_\QQ \not\subseteq \mathcal Q$ by our assumptions.
The next result provides an explicit relation between the facets of $\mathcal G$ and $\overline{\mathcal G}$ and also between the sets $\mathcal E^1$ and $\overline{\mathcal E}^1$.

\begin{lemma} \label{lemma_facets}
The following assertions hold.
\begin{enumerate}[label=\textup{(\alph*)},ref=\textup{\alph*}]
\item \label{lemma_facets_a}
If $\mathcal F$ is a facet of~$\mathcal G$ contained in~$\mathcal Q$ and $\rho \in \mathcal E^1$ corresponds to $\mathcal F$ then $\iota(\delta')$ is a positive multiple of~$\rho$.
In particular, $\mathcal F$ is unique if exists.

\item \label{lemma_facets_b}
The map $\mathcal F \mapsto \overline{\mathcal F}$ yields a bijection
\[
\lbrace \text{facets of $\mathcal G$ not contained in~$\mathcal Q$} \rbrace \to
\lbrace \text{facets of $\overline{\mathcal G}$ different from~$\mathcal G$ and~$w(\mathcal G)$} \rbrace.
\]
Moreover, if $\rho \in \mathcal E^1$ \textup(resp. $\overline \rho \in \overline{\mathcal E}^1$\textup) corresponds to $\mathcal F$ \textup(resp.~$\overline{\mathcal F}$\textup) then $\overline \rho$ is the extension of~$\rho$ to~$\overline N$ defined by $\langle \overline \rho, \alpha \rangle = 0$.
\end{enumerate}
\end{lemma}

\begin{proof}
Choose a finite generating set $\mathrm E$ for $\Gamma$.

(\ref{lemma_facets_a})
Since $M_\QQ \not\subseteq \mathcal Q$, it follows that $\rho$ and $\iota(\delta')$ have the same kernel equal to~$\QQ \mathcal F$, therefore they are proportional.
It remains to observe that both $\rho$ and $\iota(\delta')$ take positive values on any element in~$\mathrm E \setminus \mathcal Q$.

(\ref{lemma_facets_b})
Let $\mathcal F$ be a facet of $\mathcal G$ not contained in~$\mathcal Q$, let $\rho \in \mathcal E^1$ be the corresponding element, and extend $\rho$ to an element $\overline \rho \in \overline N$ by setting $\langle \overline \rho, \alpha \rangle = 0$.
Then $\overline{\mathcal F} = \overline{\mathcal G} \cap \Ker \overline \rho$, and so $\overline{\mathcal F}$ is a face of~$\overline{\mathcal G}$.
Clearly, $\overline{\mathcal F}$ has codimension one in~$\overline{\mathcal G}$ and $\overline{\mathcal F} \notin \lbrace \mathcal G, w(\mathcal G) \rbrace$.

Conversely, take any $\overline \rho \in \overline{\mathcal E}^1 \setminus \lbrace \delta, \delta' \rbrace$ and let $\mathcal F_{\overline \rho}$ be the corresponding facet of~$\overline{\mathcal G}$.
If $\langle \overline \rho, \alpha \rangle < 0$ then $\langle \overline \rho, \lambda \rangle > 0$ for all $\lambda \in w(\mathrm E) \setminus \mathrm E$, hence $\mathcal F_{\overline \rho}$ is contained in~$\mathcal G$, hence $\mathcal F_{\overline \rho} = \mathcal G$ and $\overline \rho = \delta$, which is excluded.
If $\langle \overline \rho, \alpha \rangle > 0$ then we similarly obtain $\mathcal F_{\overline \rho} = w(\mathcal G)$ and $\overline \rho = \delta'$, which is also excluded.
Thus $\langle \overline \rho, \alpha \rangle = 0$ and in particular $\mathcal F_{\overline \rho}$ is $w$-stable.
Let $\rho$ be the restriction of $\overline \rho$ to~$N$.
Then $\rho \in \mathcal E$ and therefore $\mathcal F = \mathcal G \cap \Ker \rho$ is a face of~$\mathcal G$.
Now observe that $\mathcal F = \mathcal G \cap \mathcal F_{\overline \rho}$, hence $\mathcal F$ is generated by the set $\mathrm E \cap \mathcal F_{\overline \rho}$.
Since $\mathcal F_{\overline \rho}$ is $w$-stable, it follows that $\mathcal F_{\overline \rho} = \overline{\mathcal F}$.
By dimension reasons, $\mathcal F$ is a facet of $\mathcal G$ not contained in~$\mathcal Q$.
\end{proof}

Combining (\ref{eqn_delta}), (\ref{eqn_delta'}) with Lemma~\ref{lemma_facets}(\ref{lemma_facets_b}) we obtain

\begin{corollary} \label{crl_bar_rho}
Suppose $\overline \rho \in \overline{\mathcal E}^1$; then
\[
\langle \overline \rho, \alpha \rangle =
\begin{cases}
-1 & \text{if} \ \, \overline\rho = \delta;\\
1 & \text{if} \ \, \overline\rho = \delta';\\
0 & \text{otherwise}.
\end{cases}
\]
In particular, $\alpha \in \mathfrak R_{\delta}(\overline{\mathcal E})$ and $-\alpha \in \mathfrak R_{\delta'}(\overline{\mathcal E})$.
\end{corollary}

The next proposition will be a key ingredient for converting the description of $T$-root subgroups on~$X$ in terms of~$\overline \Gamma$ into a description of $B$-root subgroups on~$X$ in terms of~$\Gamma$.

\begin{lemma} \label{lemma_DR_in_M}
The following assertions hold.
\begin{enumerate}[label=\textup{(\alph*)},ref=\textup{\alph*}]
\item \label{lemma_DR_in_M_a}
$\mathfrak R(\mathcal E) \cap \Lambda^+ \subseteq \mathfrak R(\overline{\mathcal E})$.
\item \label{lemma_DR_in_M_b}
Suppose $e \in \mathfrak R(\overline{\mathcal E})$ and $\delta_e \in \overline{\mathcal E}^1$ is the corresponding element.
Then $e \in \mathfrak R(\mathcal E) \cap \Lambda^+$ if and only if $\langle \delta, e \rangle = \langle \delta_e, \alpha \rangle = 0$.
\end{enumerate}
\end{lemma}

\begin{proof}
Suppose $e \in \mathfrak R(\mathcal E) \cap \Lambda^+$ and let $\rho \in \mathcal E^1$ be the corresponding element, so that $\langle \rho, e \rangle = -1$.
Remark~\ref{rem_DR_are_in_M} yields $e \in M$, therefore $\langle \delta, e \rangle = 0$ by~(\ref{eqn_delta}).
Next, $\langle \delta', e \rangle \ge 0$ by Lemma~\ref{lemma_delta'}.
Let $\mathcal F$ be the facet of~$\mathcal G$ corresponding to~$\rho$.
Applying Lemma~\ref{lemma_facets}(\ref{lemma_facets_a}) we find that $\mathcal F \not\subseteq \mathcal Q$.
Then Lemma~\ref{lemma_facets}(\ref{lemma_facets_b}) implies that $\rho$ extends to an element $\overline\rho \in \overline{\mathcal E}^1$ with $\langle \overline\rho, \alpha \rangle = 0$ and every $\overline\rho' \in \overline{\mathcal E}^1 \setminus \lbrace \delta, \delta', \overline \rho \rbrace$ is an extension of an element in $\mathcal E^1 \setminus \lbrace \rho \rbrace$, which implies $\langle \overline\rho', e \rangle \ge 0$.
It follows that $e \in \mathfrak R(\overline{\mathcal E})$, and we have proved~(\ref{lemma_DR_in_M_a}) and the `only if' part of~(\ref{lemma_DR_in_M_b}).

It remains to prove the `if' part of~(\ref{lemma_DR_in_M_b}).
Let $e,\delta_e$ be as in the hypothesis and suppose $\langle \delta, e \rangle = \langle \delta_e, \alpha \rangle = 0$.
Then $\delta_e \notin \lbrace \delta,\delta' \rbrace$ in view of~(\ref{eqn_delta}) and~(\ref{eqn_delta'}).
As $\langle \delta, e \rangle = 0$, one has $e \in M$.
Let $\mathcal F_e$ be the facet of $\overline{\mathcal G}$ corresponding to~$\delta_e$ and let $\rho_e$ be the restriction of $\delta_e$ to~$M$, so that $\langle \rho_e, e \rangle = -1$.
By Lemma~\ref{lemma_facets}(\ref{lemma_facets_b}), the condition $\langle \delta_e, \alpha \rangle = 0$ implies $\rho_e \in \mathcal E^1$ and $\mathcal F_e = \overline{\mathcal F}$ for a facet $\mathcal F$ of~$\mathcal G$ not contained in~$\mathcal Q$.
Now take any $\rho \in \mathcal E^1 \setminus \lbrace \rho_e \rbrace$ and let $\mathcal F'$ be the corresponding facet of~$\mathcal G$.
If $\mathcal F' \subseteq \mathcal Q$ then Lemma~\ref{lemma_facets}(\ref{lemma_facets_a}) yields $\langle \rho, e \rangle \ge 0$.
If $\mathcal F' \not\subseteq \mathcal Q$ then, by Lemma~\ref{lemma_facets}(\ref{lemma_facets_b}), $\rho$ extends to an element of $\overline{\mathcal E}^1 \setminus \lbrace \delta_e \rbrace$, hence again $\langle \rho, e \rangle \ge 0$, and
we thus obtain $e \in \mathfrak R(\mathcal E)$.
Lemma~\ref{lemma_delta'} implies $\langle \alpha^\vee, e \rangle \ge 0$, and so $e \in \Lambda^+$.
\end{proof}

\subsection{Main results}

Retain the notation and assumptions of \S\,\ref{subsec_aux_res}.
Recall from~\S\,\ref{subsec_RS_on_ATV} that there is a bijection $\overline \rho \mapsto D_{\overline \rho}$ between $\overline{\mathcal E}^1$ and the $T$-stable prime divisors on~$X$.

\begin{proposition} \label{prop_divisors}
Given $\overline \rho \in \overline{\mathcal E}^1$, the divisor
$D_{\overline \rho}$ is $B$-stable \textup(resp. $B^-$-stable, $G$-stable\textup) if and only if $\overline \rho \in \overline{\mathcal E}^1 \setminus \lbrace \delta \rbrace$ \textup(resp. $\overline \rho \in \overline{\mathcal E}^1 \setminus \lbrace \delta' \rbrace$, $\overline \rho \in \overline{\mathcal E}^1 \setminus \lbrace \delta, \delta' \rbrace$\textup).
In particular, $D_{\delta'}$ is the unique color in~$X$.
\end{proposition}

\begin{proof}
Clearly, $D_{\overline \rho}$ is $B$-stable if and only if it is $U$-stable.
Since $U$ is a $T$-root subgroup on~$X$ of weight~$\alpha$ and $\alpha \in \mathfrak R_{\delta}(\overline{\mathcal E})$ by
Corollary~\ref{crl_bar_rho}, it follows from Theorem~\ref{thm_toric_equivalence} that $D_{\overline \rho}$ is $U$-stable if and only if $\overline \rho \ne \delta$.
Similarly, $D_{\overline \rho}$ is $B^-$-stable if and only if $\overline \rho \ne \delta'$.
It remains to notice that $D_{\overline \rho}$ is $G$-stable if and only if it is simultaneously $B$-stable and $B^-$-stable.
\end{proof}

The following theorem provides a complete combinatorial description of all $B$-root subgroups on~$X$.
Part~(\ref{thm_SL2_toric_c}) of this theorem can be deduced from Corollary~\ref{crl_SL2_horo} and Proposition~\ref{prop_horo_weights}(\ref{prop_horo_weights_b}), but we provide a direct proof based on Lemma~\ref{lemma_DR_in_M}.

\begin{theorem} \label{thm_SL2_toric}
The following assertions hold.
\begin{enumerate}[label=\textup{(\alph*)},ref=\textup{\alph*}]
\item \label{thm_SL2_toric_a}
Every $B$-root subgroup on~$X$ is uniquely determined by its weight.

\item \label{thm_SL2_toric_b}
Every vertical $B$-root subgroup on~$X$ is equivalent to~$U$ and the set of weights of such subgroups is $\mathfrak R_{\delta}(\overline{\mathcal E})$.

\item \label{thm_SL2_toric_c}
Every horizontal $B$-root subgroup on~$X$ is standard and the set of weights of such subgroups is $\mathfrak R(\mathcal E) \cap \Lambda^+$.
\end{enumerate}
\end{theorem}

\begin{proof}
Since every $B$-root subgroup on~$X$ is a $T$-root subgroup and $X$ is toric as a $T$-variety, part~(\ref{thm_SL2_toric_a}) is implied by Theorem~\ref{thm_T-root_subgroups}(\ref{thm_T-root_subgroups_b}).
In what follows we simultaneously prove~(\ref{thm_SL2_toric_b}) and~(\ref{thm_SL2_toric_c}).

Take any $e \in \mathfrak R(\overline{\mathcal E})$, consider the corresponding element $\delta_e \in \overline{\mathcal E}^1$, and let $H_e$ be the $T$-root subgroup on~$X$ of weight~$e$.
Clearly, $H_e$ is a $B$-root subgroup on~$X$ if and only if $H_e$ commutes with $U$, which in turn is a $T$-root subgroup on~$X$ of weight~$\alpha \in \mathfrak R_{\delta}(\overline{\mathcal E})$ (see
Corollary~\ref{crl_bar_rho}).
Thanks to~\cite[Lemma~3.2]{AR}, $H_e$ and $U$ commute if and only if one of the following two cases occurs.

\textit{Case}~1: $e \in \mathfrak R_{\delta}(\overline{\mathcal E})$.
By Theorem~\ref{thm_toric_equivalence}, this means that $H_e$ is equivalent to $U$, and so $H_e$ is vertical.

\textit{Case}~2: $\langle \delta, e \rangle = \langle \delta_e, \alpha \rangle = 0$.
By Lemma~\ref{lemma_DR_in_M}(\ref{lemma_DR_in_M_b}), this happens if and only if $e \in \mathfrak R(\mathcal E) \cap \Lambda^+$.
Since $\delta_e \ne \delta$ by~(\ref{eqn_delta}), we find from Proposition~\ref{prop_divisors} that the divisor $D_{\delta_e}$ moved by~$H_e$ is in fact $B$-stable, hence $H_e$ is horizontal.
Moreover, from~(\ref{eqn_T-norm_LND}) and $\langle \delta_e, \alpha \rangle = 0$ we conclude that $H_e$ is standard.

As $\mathfrak R(\mathcal E) \cap \Lambda^+ \subseteq \mathfrak R(\overline{\mathcal E})$ by Lemma~\ref{lemma_DR_in_M}(\ref{lemma_DR_in_M_a}), the proof of~(\ref{thm_SL2_toric_b}) and~(\ref{thm_SL2_toric_c}) is completed.
\end{proof}

\begin{remark}
A $B$-root subgroup $H$ on $X$ is $G$-normalized if and only if it commutes with~$U^{-}$, which is a $T$-root subgroup of weight~$-\alpha \in \mathfrak R_{\delta'}(\overline{\mathcal E})$.
Applying again~\cite[Lemma~3.2]{AR} and using Theorem~\ref{thm_SL2_toric} we find that $H$ is a $G$-root subgroup if and only if $\chi_H \in \mathfrak R(\mathcal E) \cap \Lambda^+ \cap \Ker \delta'$.
By Lemma~\ref{lemma_delta'}, the latter set equals $\mathfrak R(\mathcal E) \cap \mathfrak X(G)$, which yields a
direct proof of Proposition~\ref{prop_horo_weights}(\ref{prop_horo_weights_a}) for our~$X$.
\end{remark}

As follows from Theorem~\ref{thm_SL2_toric}(\ref{thm_SL2_toric_b}), there is only one equivalence class of vertical $B$-root subgroups on~$X$: all such subgroups are equivalent to~$U$.
Our last goal is to determine the equivalence classes of horizontal $B$-root subgroups on~$X$.
Before stating and proving the result (see Theorem~\ref{thm_equiv_classes}), we need the following

\begin{lemma} \label{lemma_nonempty}
For every $\overline\rho \in \overline{\mathcal E}^1 \setminus \lbrace \delta, \delta' \rbrace$, the set $\mathfrak R_{\overline\rho}(\overline{\mathcal E}) \cap \mathfrak R(\mathcal E) \cap \Lambda^+$ is nonempty.
\end{lemma}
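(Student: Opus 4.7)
The plan is to fix any $e_0 \in \mathfrak R_{\widetilde\rho}(\mathcal E)$ and translate it inside the appropriate facet of $\mathcal G$ until dominance is achieved, thereby producing the desired Demazure root.

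First I would reduce to elements of $\mathcal E^1$. Since $\rho \ne \rho_0, \rho'_0$, the description of facets of $\overline{\mathcal G}$ established in the preceding lemma forces $\langle \rho, \alpha \rangle = 0$, so $\rho$ is just the extension by zero on $\alpha$ of a primitive element $\widetilde\rho \in \mathcal E^1$, and the associated facet $\mathcal F := \mathcal G \cap \ker \widetilde\rho$ of $\mathcal G$ is \emph{not} contained in $\overline M'_\QQ$.

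Next I would produce a suitable translation vector $v$. One checks that $\overline M'_\QQ = \overline M_\QQ \cap \ker \alpha^\vee$: indeed, Proposition~\ref{prop_criterion_toric} gives $\overline M_\QQ = M_\QQ \oplus \QQ\alpha$, and combined with $\alpha^\vee(\alpha) = 2$ and $\alpha^\vee|_{\mathfrak X(T')} = 0$, this is a direct verification. Hence $\mathcal F \not\subset \overline M'_\QQ$ yields some $v \in \mathcal F$ with $\alpha^\vee(v) \ne 0$; since $\mathcal F \subset \mathcal G \subset \Lambda^+_\QQ$ we have $\alpha^\vee \ge 0$ on $\mathcal F$, so in fact $\alpha^\vee(v) > 0$. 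Clearing denominators gives $v \in M \cap \mathcal F$ with $\alpha^\vee(v) > 0$.

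Finally I would finish by a shift argument. The lemma is vacuous unless $\rk M \ge 2$ (otherwise $\overline{\mathcal E}^1 = \{\rho_0, \rho'_0\}$), so $\mathfrak R_{\widetilde\rho}(\mathcal E)$ is nonempty; fix any $e_0 \in \mathfrak R_{\widetilde\rho}(\mathcal E)$ and set $e_k := e_0 + kv$ for $k \in \Zgezero$. Since $v \in \ker\widetilde\rho$ and $v \in \mathcal G$, the Demazure conditions for $\widetilde\rho$ are preserved: $\widetilde\rho(e_k) = -1$ and $\widetilde{\rho'}(e_k) \ge 0$ for all $\widetilde{\rho'} \in \mathcal E^1 \setminus \{\widetilde\rho\}$. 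For $k$ large we also achieve $\alpha^\vee(e_k) \ge 0$, i.e.\ $e_k \in \Lambda^+$. It remains to verify $e_k \in \mathfrak R_\rho(\overline{\mathcal E})$; since $e_k \in M$, pairings with $\rho_0$ and $\rho'_0$ give $0$ and $\alpha^\vee(e_k) \ge 0$ respectively, while every other $\rho'' \in \overline{\mathcal E}^1$ corresponds by extension-by-zero to some $\widetilde{\rho''} \in \mathcal E^1 \setminus \{\widetilde\rho\}$, and then $\langle \rho'', e_k \rangle = \widetilde{\rho''}(e_k) \ge 0$ holds automatically.

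The main obstacle is the second step: extracting $v \in \mathcal F$ with $\alpha^\vee(v) > 0$ is the place where the hypothesis $\rho \ne \rho_0, \rho'_0$ is used essentially, via the identification $\overline M'_\QQ = \overline M_\QQ \cap \ker \alpha^\vee$ and the combinatorial description of facets of $\overline{\mathcal G}$. The shift argument itself is entirely parallel to the one employed in Lemma~\ref{lemma_two_cones}.
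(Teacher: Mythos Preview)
Your proof is correct, but it follows a genuinely different route from the paper's. The paper begins with an element $e' \in \mathfrak R_\rho(\overline{\mathcal E})$ and shifts it by $q\alpha$, where $q = \langle \rho_0, e' \rangle \ge 0$, to force $\langle \rho_0, e \rangle = 0$; since $\langle \rho, \alpha \rangle = 0$ and $\langle \rho'_0, \alpha \rangle = 1$, this shift keeps $e$ inside $\mathfrak R_\rho(\overline{\mathcal E})$, and then Case~2 of the proof of Theorem~\ref{thm_SL2_toric} immediately gives $e \in \mathfrak R(\mathcal E) \cap \Lambda^+$. You instead start on the other side, with $e_0 \in \mathfrak R_{\widetilde\rho}(\mathcal E)$, and shift along a vector $v$ lying in the facet $\mathcal F \subset M_\QQ$ with $\alpha^\vee(v) > 0$ until dominance holds, then verify membership in $\mathfrak R_\rho(\overline{\mathcal E})$ by hand. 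The paper's argument is shorter because it recycles the analysis already carried out in Theorem~\ref{thm_SL2_toric}; your argument is more self-contained and makes the parallel with Lemma~\ref{lemma_two_cones} explicit. One small point: when you assert that $\langle \rho'_0, e_k \rangle = \alpha^\vee(e_k)$, you are using the identity $\rho'_0|_M = \alpha^\vee|_M$, which is precisely what the paper establishes inside the proof of Theorem~\ref{thm_SL2_toric}; it would be cleaner to cite that computation rather than leave it implicit.
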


\begin{proof}
Fix any $\overline \rho \in \overline{\mathcal E}^1 \setminus \lbrace \delta, \delta' \rbrace$ and $e' \in \mathfrak R_{\overline\rho}(\overline{\mathcal E})$.
Then $\langle {\overline\rho}, e' \rangle = -1$ and $\langle \overline\rho', e' \rangle \ge 0$ for all $\overline\rho' \in \overline{\mathcal E}^1 \setminus \lbrace \overline\rho \rbrace$.
Put $e = e' + q \alpha$ where $q = \langle \delta, e' \rangle \ge 0$.
Corollary~\ref{crl_bar_rho} yields $\langle \overline\rho, e \rangle = -1$, $\langle \overline\rho', e \rangle \ge 0$ for all $\overline \rho' \in \overline{\mathcal E}^1 \setminus \lbrace \delta, \delta', \overline \rho \rbrace$, $\langle \delta, e \rangle = 0$, and $\langle \delta', e \rangle = \langle \delta', e' \rangle + q \ge 0$, which implies $e \in \mathfrak R_{\overline\rho}(\overline{\mathcal E})$.
As $\langle \delta, e \rangle = 0$ and $\langle \overline \rho, \alpha \rangle = 0$, Lemma~\ref{lemma_DR_in_M}(\ref{lemma_DR_in_M_b}) yields $e \in \mathfrak R(\mathcal E) \cap \Lambda^+$.
\end{proof}

\begin{theorem} \label{thm_equiv_classes}
The equivalence classes of horizontal $B$-root subgroups on~$X$ are in bijection with the $G$-stable prime divisors on~$X$.
More precisely, under this bijection every $G$-stable prime divisor $D \subseteq X$ corresponds to all $B$-root subgroups that move~$D$.
\end{theorem}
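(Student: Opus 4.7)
The plan is to assemble the ingredients already prepared in this section: Theorem~\ref{thm_SL2_toric} identifies the weights of horizontal $B$-root subgroups on~$X$ with the set $\mathfrak R(\mathcal E) \cap \Lambda^+$, Theorem~\ref{thm_toric_equivalence} describes equivalence of $T$-root subgroups on the toric $T$-variety~$X$ in terms of the rays of~$\overline{\mathcal E}$, and Corollary~\ref{crl_rho'_0} together with Lemma~\ref{lemma_nonempty} pinpoint exactly which rays of~$\overline{\mathcal E}$ arise from horizontal $B$-root subgroups.

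First I would record the indexing. The paragraph preceding the theorem identifies $G$-stable prime divisors on~$X$ with elements of $\overline{\mathcal E}^1 \setminus \lbrace \rho_0, \rho'_0 \rbrace$ via $\rho \mapsto D_\rho$. Since every $B$-root subgroup on~$X$ is automatically a $T$-root subgroup (because $X$ is toric as a $T$-variety), Theorem~\ref{thm_toric_equivalence} applied to~$X$ shows that two such $B$-root subgroups are equivalent if and only if their weights, regarded as Demazure roots of~$\overline{\mathcal E}$, are attached to the same ray of~$\overline{\mathcal E}$; moreover, the $T$-root subgroup of weight $e \in \mathfrak R_\rho(\overline{\mathcal E})$ moves exactly the $T$-stable prime divisor~$D_\rho$.

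Next I would trace which rays $\rho \in \overline{\mathcal E}^1$ can occur for horizontal $B$-root subgroups. Given such a subgroup $H$, Theorem~\ref{thm_SL2_toric}(\ref{thm_SL2_toric_c}) yields $\chi(H) \in \mathfrak R(\mathcal E) \cap \Lambda^+$; by the analysis carried out in Case~2 of the proof of Theorem~\ref{thm_SL2_toric}, the associated ray $\rho_{\chi(H)} \in \overline{\mathcal E}^1$ satisfies $\langle \rho_{\chi(H)}, \alpha \rangle = 0$, which forces $\rho_{\chi(H)} \ne \rho_0$, while Corollary~\ref{crl_rho'_0} rules out $\rho_{\chi(H)} = \rho'_0$. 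Conversely, Lemma~\ref{lemma_nonempty} provides, for every $\rho \in \overline{\mathcal E}^1 \setminus \lbrace \rho_0, \rho'_0 \rbrace$, at least one weight in $\mathfrak R_\rho(\overline{\mathcal E}) \cap \mathfrak R(\mathcal E) \cap \Lambda^+$, hence a horizontal $B$-root subgroup attached to~$\rho$.

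Combining these observations, the map sending a horizontal $B$-root subgroup~$H$ to the $G$-stable divisor $D_{\rho_{\chi(H)}}$ descends to a well-defined bijection between equivalence classes of horizontal $B$-root subgroups on~$X$ and $G$-stable prime divisors on~$X$, and the divisor-moving part of Theorem~\ref{thm_toric_equivalence} ensures that the class corresponding to~$D_\rho$ consists precisely of those horizontal $B$-root subgroups that move~$D_\rho$. The only potential subtlety in this argument is the bookkeeping between the two cones $\mathcal E$ and~$\overline{\mathcal E}$ and the passage from Demazure roots of one to Demazure roots of the other; there is no deeper obstacle, since all the substantive work has already been carried out in the preceding lemmas and in the proof of Theorem~\ref{thm_SL2_toric}.
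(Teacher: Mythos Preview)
Your proposal is correct and follows essentially the same route as the paper, which simply states that the theorem is implied by combining Theorem~\ref{thm_SL2_toric}(\ref{thm_SL2_toric_c}), Corollary~\ref{crl_rho'_0}, and Lemma~\ref{lemma_nonempty} with Theorem~\ref{thm_toric_equivalence} (together with the identification of $G$-stable divisors with $\overline{\mathcal E}^1 \setminus \lbrace \rho_0, \rho'_0 \rbrace$ in the preceding paragraph). You have just spelled out the bookkeeping that the paper leaves implicit.
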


\begin{proof}
Let $H$ be a horizontal $B$-root subgroup on~$X$ and let $\overline\rho \in \overline{\mathcal E}^1$ be such that $\chi_H \in \mathfrak R_{\overline\rho}(\overline{\mathcal E})$, so that $H$ moves the divisor~$D_{\overline\rho}$.
By Theorem~\ref{thm_SL2_toric}(\ref{thm_SL2_toric_c}) and Lemma~\ref{lemma_DR_in_M}(\ref{lemma_DR_in_M_b}), we have $\langle \overline \rho, \alpha \rangle = 0$.
Then Corollary~\ref{crl_bar_rho} yields $\overline\rho \notin \lbrace \delta, \delta' \rbrace$, and so $D_{\overline\rho}$ is $G$-stable by Proposition~\ref{prop_divisors}.
On the other hand, thanks to Proposition~\ref{prop_divisors}, Theorem~\ref{thm_SL2_toric}(\ref{thm_SL2_toric_c}), and Lemma~\ref{lemma_nonempty}, for every $G$-stable prime divisor~$D$ on~$X$ there exists a horizontal $B$-root subgroup that moves~$D$.
The rest is implied by Theorem~\ref{thm_toric_equivalence}.
\end{proof}

\begin{example}[An illustration of the main results]
Retain the situation and notation of Example~\ref{ex_moved_colors} and take $X' = D_3 = \lbrace \det = 0 \rbrace$.
Then $X'$ is toric as a $T$-variety and $\Gamma(X') = \ZZ_{\ge0} \lbrace \omega + \chi_1, \omega + \chi_2 \rbrace$.
There are two $G$-stable prime divisors $D'_1 = \lbrace x_{11}=x_{21} =0 \rbrace$, $D'_2 = \lbrace x_{12} = x_{22} = 0 \rbrace$ and one color $D' = \lbrace x_{21} = x_{22} = 0 \rbrace$ in~$X'$.
By Corollary~\ref{crl_no_moved_colors}, there is no $B$-root subgroup on $X'$ moving~$D'$.
For $i=1,2$, the set of weights of (the equivalence class of) $B$-root subgroups on $X'$ moving $D'_i$ is $\chi_j - \chi_i + \ZZ_{\ge0}(\omega + \chi_j)$ (where $j = 3-i$); among these subgroups there is a unique $G$-root subgroup, which is induced by the action of $H_i$ and has the weight $\chi_j - \chi_i$.
The set of weights of vertical $B$-root subgroups on~$X'$ is $\alpha + \Gamma(X')$, and so all such subgroups are replicas of~$U$.
\end{example}

\begin{remark}
The general strategy in this section is to take an affine spherical $G$-variety $X$ and impose the restriction that $T$ acts on~$X$ with an open orbit.
We could also start with an affine toric $T$-variety $X$ and impose the restriction that the $T$-action on $X$ extends to a $G$-action; this situation is discussed in~\cite[\S\,2.2]{AL}.
In this way one can obtain an alternative proof of Theorem~\ref{thm_equiv_classes}.
\end{remark}

\begin{remark}
It is an interesting problem to study $B$-root subgroups on affine spherical varieties of a connected reductive group $G$ of semisimple rank one such that the maximal torus $T$ in $G$ has no open orbit in $X$. In this case $T$ acts on $X$ with complexity one and a description of $B$-root subgroups can be obtained using the results of~\cite{Ma}.
\end{remark}


\end{document}